\theoremstyle{plain}
\def\dis{\displaystyle}
\def\nd{\noindent}
\def\thend{\rule{3mm}{3mm}}
\newtheorem{theorem}{Theorem}[section]
\newtheorem{proposition}{Proposition}[section]
\newtheorem{lemma}{Lemma}[section]
\newtheorem{definition}{Definition}[section]
\newtheorem{corollary}{Corollary}[section]
\newtheorem*{theorem*}{Theorem}
\numberwithin{equation}{section}
\begin{document}
\title{Multi-bump positive solutions for a logarithmic Schr\"{o}dinger equation with deepening potential well}
\author{ Claudianor O. Alves\footnote{C.O. Alves was partially supported by CNPq/Brazil  304804/2017-7.} \,\, and \,\, Chao Ji \footnote{C. Ji was partially supported by Natural Science Foundation of Shanghai(20ZR1413900,18ZR1409100).}}

\maketitle

\begin{abstract}
This article concerns the existence of multi-bump positive solutions  for the following logarithmic Schr\"{o}dinger equation
$$
\left\{
\begin{array}{lc}
-\Delta u+ \lambda V(x)u=u \log u^2, & \mbox{in} \quad \mathbb{R}^{N}, \\
u \in H^1(\mathbb{R}^{N}), \\
\end{array}
\right.
$$
where  $N \geq 1$, $\lambda>0$ is a parameter and the nonnegative continuous function $V: \mathbb{R}^{N}\rightarrow \mathbb{R}$ has a potential well
$\Omega: =\text{int}\, V^{-1}(0)$ which possesses $k$ disjoint bounded components $\Omega=\bigcup_{j=1}^{k}\Omega_{j}$. Using the variational methods, we prove that if the parameter $\lambda>0$
is large enough, then the equation has at least $2^{k}-1$ multi-bump positive solutions.
\end{abstract}

{\small \textbf{2010 Mathematics Subject Classification:} 35A15, 35J10; 35B09}

{\small \textbf{Keywords:} Variational methods, Logarithmic Schr\"odinger equation,  Multi-bump solutions, Deepening potential well.}

\section{Introduction}

In this article, we are concerned with the existence of multi-bump positive solutions  for the following logarithmic Schr\"{o}dinger equation
$$
\left\{
\begin{array}{lc}
-\Delta u+ \lambda V(x)u=u \log u^2, & \mbox{in} \quad \mathbb{R}^{N}, \\
u \in H^1(\mathbb{R}^{N}), & \;  \\
\end{array}
\right.
\eqno{(P_{\lambda})}
$$
where $\lambda>0$ is a parameter,  $N \geq 1$ and $V: \mathbb{R}^{N}\rightarrow \mathbb{R}$ is a continuous function satisfying the following conditions:
\begin{itemize}
	\item[\rm ($V1$)] $V(x)\geq 0$, for  all $x\in \mathbb{R}^N$.

	\item[\rm ($V2$)] $\Omega:=\text{int}\, V^{-1}(0)$ is a nonempty bounded open subset with smooth boundary and $\overline{\Omega}=V^{-1}(0)$  where $\text{int}\, V^{-1}(0)$ denotes the set of the interior points of $V^{-1}(0)$.

    \item[\rm ($V3$)] $\Omega$ consists of $k$ components:

    $$
	\Omega=\Omega_{1}\cup\Omega_{2}\cup\cdots\cup \Omega_{k},
	$$
    and $\overline{\Omega}_{i}\bigcap \overline{\Omega}_{j}=\emptyset$ for all $i\neq j$.

\end{itemize}

\begin{definition}
A  solution of the problem  $(P_{\lambda})$ {\color{blue}is} a  function $u \in H^1(\mathbb{R}^{N})$ such that $ u^2\log u^2 \in L^1(\mathbb{R}^{N})$ and
\end{definition}
\begin{equation*}
\displaystyle \int_{\mathbb{R}^N} (\nabla u \nabla v +\lambda V(x)u  v)dx=\displaystyle \int_{\mathbb{R}^N} uv \log u^2dx,\,\, \mbox{for all } v \in C^\infty_0(\mathbb{R}^{N}).
\end{equation*}

In recent years, the logarithmic Schr\"odinger equation
 has   received considerable attention. This class of equations has some important physical applications, such as quantum mechanics, quantum optics, nuclear physics, transport and diffusion phenomena, open quantum systems, effective quantum gravity, theory of superfluidity and Bose-Einstein condensation (see \cite{z} and the references therein). On the other hand, the logarithmic Schr\"odinger equation also raises many difficult mathematical problems, for example, the energy functional associated is not well defined in $H^{1}(\mathbb{R}^N)$, because there exists $u \in H^{1}(\mathbb{R}^N)$ such that $\int_{\mathbb{R}^N}u^{2}\log u^2 \, dx=-\infty$. Indeed, it is enough to consider a smooth function that satisfies
 $$
 u(x)=
 \left\{
 \begin{array}{l}
 (|x|^{N/2}\log(|x|))^{-1}, \quad |x| \geq 3, \\
 0, \quad |x| \leq 2.	
 \end{array}
 \right.
 $$

 In order to overcome this technical difficulty, some authors have used different techniques to study the existence, multiplicity and concentration of the solutions under some assumptions on the potential $V$, which can be seen in  \cite{AlvesdeMorais}, \cite{AlvesdeMoraisFigueiredo}, \cite{AlvesChao}, \cite{AlvesChao1},  \cite{AlvesChao2}, \cite{squassina}, \cite{ASZ}, \cite{DZ}, \cite{cs},  \cite{sz}, \cite{sz2}, \cite{TZ},  \cite{WZ}, \cite{ZZ} and the references therein.

One of the main motivations of this paper goes back to the results for the nonlinear Schr\"odinger equations with deepening potential well of the type
\begin{align}\label{problem1}
\left\{
\begin{aligned}
&-\Delta u+ (\lambda V(x)+Z(x))u= u^p, & \mbox{in} \quad \mathbb{R}^{N}, \\
&u(x)>0, & \mbox{in} \quad \mathbb{R}^{N}, \\
\end{aligned}
\right.
\end{align}
by supposing that the first eigenvalue of $-\Delta+Z(x)$ on $\Omega_{j}$ under Dirichlet boundary condition is positive for each $j\in \{1, 2, \cdots, k\}$,
$p\in (1, \frac{N+2}{N-2})$ and $N\geq 3$. In \cite{DT}, Ding and Tanaka showed the problem \eqref{problem1} has at least $2^{k}-1$ multi-bump solutions for $\lambda>0$ large enough. These solutions have the following characteristics:

For each non-empty subset $\Gamma\subset \{1, 2, \cdots, k\}$ and $\epsilon>0$ fixed, there exists $\lambda^{*}>0$ such that the problem \eqref{problem1} possesses a solution $u_{\lambda}$, for $\lambda\geq \lambda^{*}=\lambda^{*}(\epsilon)$, satisfying:\\
$$
\Big\vert\int_{\Omega_{j}} \big (|\nabla u_{\lambda}|^2+(\lambda V(x) +Z(x))|u_{\lambda}|^2\big)dx-\Big(\frac{1}{2}-\frac{1}{p+1}\Big)^{-1}c_{j}\Big\vert<\epsilon,\,\, \forall j\in\Gamma
$$
and
$$
\int_{\mathbb{R}^{N}\backslash \Omega_{\Gamma}} \big (|\nabla u_{\lambda}|^2+(\lambda V(x) +Z(x))|u_{\lambda}|^2\big)dx<\epsilon,
$$
where $\Omega_{\Gamma}=\underset{j\in \Gamma}{\bigcup}\Omega_{j}$ and $c_{j}$ is the minimax level of the energy functional related to
the problem
\begin{align*}
 \left\{
\begin{aligned}
&-\Delta u+Z(x)u=u^p,\quad\text{in}\,\, \Omega_{j},\\
&u>0, \quad \quad\quad\quad \quad\quad\quad\,\,\,\text{in}\,\,\Omega_{j},\\
&u=0, \quad \quad\quad\quad \quad\quad\quad\,\,\,\text{on}\,\,\partial \Omega_{j}.
\end{aligned}
\right.
\end{align*}

Later, for the critical growth case, Alves et al. \cite{ADS} considered the existence of multi-bump solutions for the following problem
 $$
\left\{
\begin{array}{lc}
-\Delta u+ (\lambda V(x)+Z(x))u= f(u), & \mbox{in} \quad \mathbb{R}^{N}, \\
u(x)>0, & \mbox{in} \quad \mathbb{R}^{N}, \\
\end{array}
\right.
$$
where $N\geq 3$. For the case $N=2$ and $f$ having an exponential critical growth, Alves and Souto \cite{AS} obtained the same results. Moreover, these solutions found in \cite{ADS} and \cite{AS}
have the same characteristics of those found in \cite{DT}. For the further research about the nonlinear Schr\"odinger equations with deepening potential well, we refer to \cite{Alves},  \cite{AN}, \cite{ANY}, \cite{BW1}, \cite{BW2}, \cite{GT},  \cite{lz} and their references. \\

In particular, due to our scope, we would like to mention \cite{TZ} where Tanaka and Zhang studied the multi-bump solutions for the spatially periodic logarithmic Schr\"odinger equation
$$
\left\{
\begin{array}{lc}
-\Delta u+  V(x)u=Q(x)u \log u^2, \,\,\, u>0\,\,\,& \mbox{in} \,\,\, \mathbb{R}^{N}, \\
u \in H^1(\mathbb{R}^{N}), & \;  \\
\end{array}
\right.
\eqno{(LS)}
$$
where $N\geq 1$ and $V(x)$, $Q(x)$ are spatially 1-periodic functions of class $C^{1}$. The authors took
an approach using spatially 2L-periodic problems ($L\gg 1$) and showed the existence of infinitely many multi-bump solutions of equation $(LS)$ which are
distinct under $\mathbb{Z}^{N}$-action. In the present paper, we shall establish the existence of multi-bump solutions by using a different approach from that found in \cite{TZ}.  We also notice that in \cite{AlvesdeMoraisFigueiredo},  Alves et al. have studied problem $(P_{\lambda})$ with $V$ satisfies ($V1$), ($V2$) and
\begin{itemize}
	\item[\rm $(V3)'$] There exists $M_{0}>0$ such that $\vert \{x\in \mathbb{R}^{N}; V(x)\leq M_{0}\}\vert < +\infty$, where $\vert A\vert$ denotes the Lebesgue
measure of a measurable set $A\subset \mathbb{R}^{N} $.
\end{itemize}
On one hand, by using condition $(V3)'$, it is easy to overcome the difficulty of lack of  compactness in the whole space $\mathbb{R}^{N}$.
On the other hand, the authors in \cite{AlvesdeMoraisFigueiredo} cannot obtain the multi-bump solutions. Recently, Alves and Ji \cite{AlvesChao2} used the variational methods to prove the existence and concentration of  positive solutions  for a logarithmic Schr\"{o}dinger equation under a local assumption on the potential $V$. In that paper, in order to prove (PS) condition, we modified the nonlinearity in a special way to work an auxiliary problem. By making some new estimates, we proved that the solutions obtained for the auxiliary  problem are solutions  of the original problem when $\epsilon>0$ is sufficient small. Moreover, since the functional associated with the auxiliary  problem  lost some other good properties, we developed a new method to prove the  boundedness of (PS) sequence. Inspired by \cite{AlvesChao2, Alves, DT},  the main purpose
of the present paper is to investigate the existence and multiplicity of multi-bump  positive solutions, as in  \cite{DT}, for the problem $(P_\lambda)$ by  adapting the penalization method found in del Pino and Felmer \cite{DF}.\\

The main result to be proved is the theorem below.

\begin{theorem}\label{teorema}
Suppose that $V$ satisfies $(V1)$--($V3$).  Then, for any non-empty subset $\Gamma$
of $\{1, 2, \cdots, k\}$, there exists $\lambda^{*}>0$ such that, for all $\lambda\geq \lambda^{*}$, the problem $(P_{\lambda})$ has a positive solution $u_{\lambda}$. Moreover, the family $\{u_{\lambda}\}_{\lambda\geq \lambda^{*}}$ has the following properties:  for any sequence $\lambda_{n}\rightarrow \infty$, we can extract a subsequence $\lambda_{n_{i}}$ such that $u_{\lambda_{n_{i}}}$
converges strongly in  $H^{1}(\mathbb{R}^N)$ to a function $u$ which satisfies $u(x)=0$ for $x\not\in \Omega_{\Gamma}$ and the
restriction $u|_{\Omega_{j}}$ is a least energy solution of
\begin{align*}
 \left\{
\begin{aligned}
&-\Delta u=u \log u^2,\quad\,\text{in} \,\,\Omega_{\Gamma},\\
&u>0, \quad \quad\quad\quad \quad\,\,\,\,\text{in}\,\, \Omega_{\Gamma},\\
&u=0, \quad \quad\quad\quad \quad\,\,\,\, \text{on}\,\,\partial \Omega_{\Gamma},
\end{aligned}
\right.
\end{align*}
where $\Omega_{\Gamma}=\underset{j\in \Gamma}{\bigcup}\Omega_{j}$.
\end{theorem}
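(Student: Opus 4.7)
The plan is to adapt the penalization scheme of del~Pino--Felmer \cite{DF} to the logarithmic setting, combining it with the multi-bump minimax of Ding--Tanaka \cite{DT} and the technical framework for $(P_{\lambda})$ developed in \cite{AlvesChao2}. Fix a non-empty $\Gamma\subset\{1,\ldots,k\}$ and choose bounded open sets $\Omega'_{j}\supset\overline{\Omega}_{j}$ ($j\in\Gamma$) whose closures are pairwise disjoint and disjoint from $\overline{\Omega}_{i}$ ($i\notin\Gamma$); set $\Omega'_{\Gamma}=\bigcup_{j\in\Gamma}\Omega'_{j}$. Since the primitive $F(s)=\tfrac{1}{2}s^{2}\log s^{2}-\tfrac{1}{2}s^{2}$ of $f(s)=s\log s^{2}$ is not bounded below on $H^{1}(\mathbb{R}^{N})$ and $f$ fails to be Lipschitz at $s=0$, I would first rewrite the natural functional of $(P_{\lambda})$ as the sum of a $C^{1}$ part and a convex lower semicontinuous part in the manner of Szulkin/Squassina (\cite{squassina,AlvesChao2}). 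I would then penalize $f$ outside $\Omega'_{\Gamma}$, replacing its positive part by a Lipschitz truncation $\tilde f(s)=\min\{f(s)^{+},\nu s\}$ with $\nu>0$ small relative to the eventual $\lambda V$; inside $\Omega'_{\Gamma}$ keep $f$ unchanged. Call $J_{\lambda}$ the resulting auxiliary functional.

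To obtain the multi-bump critical point, I would mimic Ding--Tanaka. Let $w_{j}\in H^{1}_{0}(\Omega_{j})$ denote a positive least-energy solution of $-\Delta w=w\log w^{2}$ on $\Omega_{j}$, with energy $c_{j}$; existence and positivity on bounded smooth domains are standard. Using the $|\Gamma|$-parameter product construction, build a continuous $\gamma_{0}\colon[0,1]^{|\Gamma|}\to H^{1}(\mathbb{R}^{N})$ whose $j$-th parameter slides a rescaled version of $w_{j}$ (extended by zero) along a mountain-pass path of the $j$-th local Dirichlet problem. Set
\begin{equation*}
b_{\lambda}\;=\;\inf_{\gamma\in\Sigma}\;\max_{t\in[0,1]^{|\Gamma|}}J_{\lambda}(\gamma(t)),
\end{equation*}
where $\Sigma$ is the class of admissible deformations of $\gamma_{0}$. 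A direct upper bound gives $\limsup_{\lambda\to\infty}b_{\lambda}\le c_{\Gamma}:=\sum_{j\in\Gamma}c_{j}$, and a matching lower bound comes from the penalization together with $\lambda V\to+\infty$ off $\overline{\Omega}$. A Cerami sequence at level $b_{\lambda}$ exists by a deformation argument adapted to the Szulkin framework; its boundedness is established through the nonstandard scheme of \cite{AlvesChao2} (which bypasses the failure of the Ambrosetti--Rabinowitz type argument for $u\log u^{2}$), and the sublinear truncation outside $\Omega'_{\Gamma}$ combined with compactness on bounded pieces yields a nonnegative critical point $u_{\lambda}$ of $J_{\lambda}$.

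The main obstacle---and the heart of the argument---is showing that $u_{\lambda}$ actually solves the \emph{original} problem when $\lambda$ is large. From $b_{\lambda}\to c_{\Gamma}$ and $\lambda V(x)\to\infty$ off $\overline{\Omega}$, a concentration--compactness argument forces
\begin{equation*}
\int_{\mathbb{R}^{N}\setminus\Omega'_{\Gamma}}\bigl(|\nabla u_{\lambda}|^{2}+\lambda V u_{\lambda}^{2}\bigr)\,dx\;\longrightarrow\;0\quad\text{as}\;\lambda\to\infty.
\end{equation*}
A Moser iteration on the equation satisfied by $u_{\lambda}$ on $\mathbb{R}^{N}\setminus\overline{\Omega}_{\Gamma}$---where the penalized nonlinearity is Lipschitz and subcritical---then gives a uniform $L^{\infty}$-decay $\|u_{\lambda}\|_{L^{\infty}(\mathbb{R}^{N}\setminus\Omega'_{\Gamma})}\to 0$, so that the penalization is eventually inactive there and $u_{\lambda}$ satisfies $(P_{\lambda})$. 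Passing to a subsequence along any $\lambda_{n}\to\infty$, the concentration estimate together with compactness on $\Omega'_{\Gamma}$ gives strong convergence in $H^{1}(\mathbb{R}^{N})$ to a limit $u$ that vanishes outside $\overline{\Omega}_{\Gamma}$ with energy exactly $\sum_{j\in\Gamma}c_{j}$; since each factor of the minimax construction forces at least $c_{j}$ on $\Omega_{j}$, the restriction $u|_{\Omega_{j}}$ is a least-energy solution on $\Omega_{j}$ for every $j\in\Gamma$, and strict positivity on the interior follows from the strong maximum principle.
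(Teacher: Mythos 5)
Your plan is essentially the Ding--Tanaka penalization--minimax strategy that the paper follows (truncate the nonlinearity outside $\Omega'_{\Gamma}$, run a product minimax over $[\cdot]^{|\Gamma|}$, show the truncation is inactive for large $\lambda$, pass to the limit), but you take a genuinely different route on the central technical obstacle: the failure of the energy functional to be $C^{1}$ on $H^{1}(\mathbb{R}^{N})$. You propose to work directly on $\mathbb{R}^{N}$ in the Szulkin convex-plus-$C^{1}$ framework with Cerami sequences and a nonsmooth deformation lemma; the paper instead avoids nonsmooth critical point theory altogether by solving auxiliary problems $(M_{\lambda,R})$ on balls $B_{R}(0)$, where the functional \emph{is} $C^{1}$ and satisfies $(PS)$, and then letting $R\to+\infty$ (Proposition \ref{prop1*}) before sending $\lambda\to+\infty$. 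Your route is the one taken by Tanaka--Zhang for the periodic problem and is viable, but it is costlier exactly where your sketch is thinnest: the theorem needs the critical point to be localized so that $u|_{\Omega_{j}}\neq 0$ and $I_{j}(u|_{\Omega_{j}})\geq c_{j}$ for \emph{every} $j\in\Gamma$ (otherwise $I_{\Gamma}(u)=c_{\Gamma}$ could be achieved with all the mass on a proper sub-collection), and this localization does not follow from the minimax value $b_{\lambda}\to c_{\Gamma}$ alone. It requires the quantitative deformation confined to the annular set $(A_{2\mu}^{\lambda}\setminus A_{\mu}^{\lambda})\cap\Phi^{c_{\Gamma}}_{\lambda}$ together with the uniform gradient bound of Proposition \ref{prop71} and the Miranda-theorem intersection lemma (Lemma \ref{Lem:3.2a}); running that flow argument for a merely lower semicontinuous perturbation is precisely what the paper's $B_{R}$ device is designed to sidestep, so you should not treat it as routine. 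Two smaller points: your truncation threshold should be measured against the linear term you add to both sides (the paper works with $-\Delta u+(\lambda V+1)u$ and requires $\tilde F'_2(s)s\le l s^{2}$ with $l<1$), not against $\lambda V$, since $V\equiv 0$ on the components $\Omega_{i}$, $i\notin\Gamma$, which lie outside $\Omega'_{\Gamma}$; and the uniform $L^{\infty}$-decay you invoke is stronger than needed --- the paper only proves $|u_{\lambda,R}|_{\infty,B_R(0)\setminus\Omega'_{\Gamma}}\le a_{0}$ via a comparison argument with $(u-a_{0})^{+}$ after a boundary Moser estimate, which suffices to deactivate the penalization.
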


\begin{corollary}\label{cor}
Under the assumptions of Theorem $1.1$, there exists $\lambda_{*}>0$
such that, for all $\lambda\geq \lambda_{*}$, the problem $(P_{\lambda})$ has at least $2^{k}-1$ positive solutions.
\end{corollary}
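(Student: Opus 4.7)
The plan is to apply Theorem \ref{teorema} once for each of the $2^k-1$ non-empty subsets $\Gamma$ of $\{1,2,\ldots,k\}$. For each such $\Gamma$, the theorem supplies a threshold $\lambda^{*}(\Gamma)>0$ and, for every $\lambda\geq \lambda^{*}(\Gamma)$, a positive solution $u^{\Gamma}_{\lambda}$ of $(P_\lambda)$ with the precise asymptotic profile on $\Omega_\Gamma$ stated there. Putting
$$
\lambda_{*} := \max_{\emptyset \neq \Gamma \subset \{1,\ldots,k\}} \lambda^{*}(\Gamma),
$$
for every $\lambda \geq \lambda_{*}$ we have simultaneously a family $\{u^{\Gamma}_{\lambda}\,:\,\emptyset \neq \Gamma \subset \{1,\ldots,k\}\}$ of $2^{k}-1$ candidate positive solutions. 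The only content of the corollary that is not immediate is therefore the pairwise distinctness of these solutions for $\lambda$ large enough.

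To establish distinctness, I argue by contradiction. Fix two distinct non-empty subsets $\Gamma_{1}\neq \Gamma_{2}$ and assume there is a sequence $\lambda_{n}\to \infty$ with $u^{\Gamma_{1}}_{\lambda_{n}}=u^{\Gamma_{2}}_{\lambda_{n}}$ for every $n$. Applying the convergence conclusion of Theorem \ref{teorema} to this common sequence, first with the choice $\Gamma=\Gamma_{1}$ and then, along a further subsequence, with $\Gamma=\Gamma_{2}$, yields a single $H^{1}(\mathbb{R}^N)$-limit $u_{\infty}$ such that (i) $u_{\infty}=0$ a.e.\ outside $\Omega_{\Gamma_{1}}$, (ii) $u_{\infty}=0$ a.e.\ outside $\Omega_{\Gamma_{2}}$, and (iii) for every $j\in \Gamma_{1}$ the restriction $u_{\infty}|_{\Omega_{j}}$ is a least energy positive solution of the Dirichlet problem $-\Delta u = u\log u^{2}$ on $\Omega_{j}$. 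Since $\Gamma_{1}\neq \Gamma_{2}$, pick $j_{0}$ in the symmetric difference, say $j_{0}\in \Gamma_{1}\setminus \Gamma_{2}$. Property (ii) together with $\Omega_{j_{0}}\cap \Omega_{\Gamma_{2}}=\emptyset$ forces $u_{\infty}\equiv 0$ on $\Omega_{j_{0}}$, while property (iii) says $u_{\infty}|_{\Omega_{j_{0}}}$ is positive and nontrivial; contradiction. As there are only finitely many unordered pairs $\{\Gamma_{1},\Gamma_{2}\}$, enlarging $\lambda_{*}$ finitely many times yields a single threshold beyond which all $2^{k}-1$ solutions are distinct.

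The main obstacle of the paper is thus located entirely in Theorem \ref{teorema} itself, which delivers the solutions $u^{\Gamma}_{\lambda}$ with the prescribed concentration on $\Omega_{\Gamma}$; within the proof of the corollary, the only substantive point one must verify is the nontriviality of a least energy Dirichlet solution of $-\Delta u = u\log u^{2}$ on each bounded smooth component $\Omega_{j}$, which is standard for the logarithmic nonlinearity (the associated energy functional is well-defined on $H^{1}_{0}(\Omega_{j})$ and attains a strictly negative mountain-pass/minimization level) and is in any case implicit in the characterization of the limit function given by Theorem \ref{teorema}.
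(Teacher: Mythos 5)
Your argument is correct and is precisely the standard deduction the paper intends (the paper in fact states Corollary \ref{cor} without writing out a proof): apply Theorem \ref{teorema} to each of the $2^{k}-1$ non-empty subsets $\Gamma$, take the maximum of the thresholds, and separate the solutions for large $\lambda$ via the limiting profile, since for $j_{0}\in\Gamma_{1}\setminus\Gamma_{2}$ the $H^{1}$-limit along a common subsequence would have to be simultaneously zero and a nontrivial least energy solution on $\Omega_{j_{0}}$. Your handling of the two successive subsequence extractions and of the nontriviality of the limit on each $\Omega_{j}$, $j\in\Gamma$, is exactly what is needed.
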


In the above mentioned papers \cite{AlvesChao2}, \cite{Alves} and \cite{DT}, it was essential the method developed in \cite{DF},
which consists in modifying the nonlinearity to obtain an auxiliary problem, whose energy functional associated satisfies the (PS)
condition. After that, making some estimates, it is possible to prove that the solutions obtained for the auxiliary problem are in fact
solutions for the original problem when $\lambda$ is large enough. However, since the natural energy functional associated with the problem $(P_\lambda)$ given by
$$
I_{\lambda}(u)=\frac{1}{2}\int_{\mathbb{R}^N}\big (|\nabla u|^2+(\lambda V(x)+1)|u|^2\big)dx-\frac{1}{2}\int_{\mathbb{R}^N}u^2\log u^{2} \,dx,
$$
is not well defined in $H^{1}(\mathbb{R}^N)$, because there exists a function $u$ in that space such that $I_{\lambda}(u)=+\infty$, we cannot directly use the critical points theory for $C^{1}$ functional, then we need to use a different approach from that used  to  the Schr\"{o}dinger equation \eqref{problem1}. Here, we have used strongly the fact that the functional $I_{\lambda}$  is of class $C^{1}$ in  $H^{1}(\mathcal{D})$, when $\mathcal{D} \subset \mathbb{R}^N$ is a bounded domain. Based on this observation, for each $R>0$ and $\lambda>0$ large, we first find a solution $u_{\lambda, R}\in H_{0}^{1}(B_{R}(0))$, and after, taking the limit of $R\rightarrow +\infty$, we get a solution for the original problem. On the other hand, for the nonlinear term $u^{p}$ in \eqref{problem1}, it is easy to verify that $\lim_{t\rightarrow 0}t^{p}/\vert t\vert=0$ as $t\rightarrow 0$
and the function $t^{p}/ t$ is increasing for $t\in (0, +\infty)$ which are very important to use the method in \cite{DF}. But for our problem, the nonlinear term is $u\log u^{2}+u$, it is clear that $t\log t^{2}+t\neq o(t)$ as $t\rightarrow 0$. Thus, we cannot apply directly  del Pino and Felmer's method in \cite{DF} and our problem is more difficult and complicated. The plan of the paper is as follows: In Section 2 we prove the existence of multi-bump solutions for an auxiliary problem in the ball $B_R(0)$ for $R>0$ , while in Section 3 we prove Theorem \ref{teorema}.

\vspace{0.5 cm}

\noindent \textbf{Notation:} From now on in this paper, otherwise mentioned, we use the following notations:
\begin{itemize}
	\item $B_r(u)$ is an open ball centered at $u$ with radius $r>0$, $B_r=B_r(0)$.
	
	\item If $g$ is a mensurable function, the integral $\dis\int_{\mathbb{R}^N}g(x)\,dx$ will be denoted by $\dis\int g(x)\,dx$.
	
	\item   $C$ denotes any positive constant, whose value is not relevant.
	
	\item  $|\,\,\,|_p$ denotes the usual norm of the Lebesgue space $L^{p}(\mathbb{R}^N)$, for $p \in [1,+\infty]$,
    $\Vert\,\,\,\Vert$ denotes the usual norm of the Sobolev space $H^{1}(\mathbb{R}^N)$.

    \item  $H_c^{1}(\mathbb{R}^N)=\{u \in H^{1}(\mathbb{R}^N)\,:\, u \,\, \mbox{has compact support}\, \}.$
	
	\item $o_{n}(1)$ denotes a real sequence with $o_{n}(1)\to 0$ as $n \to +\infty$.
	
	\item $2^*=\frac{2N}{N-2}$ if $N \geq 3$ and $2^*=+\infty$ if $N=1,2$.
\end{itemize}

\section{An auxiliary problem on the ball $B_{R}(0)$}

We shall work on the following space of functions:
$$
E_{\lambda}=\Big\{u\in H^{1}(\mathbb{R}^N): \int V(x)\vert u\vert^{2} dx<+\infty\Big\},
$$
endowed with the norm
$$
\Vert u\Vert_{\lambda}=\Big(\int \big (|\nabla u|^2+({\lambda V(x)} +1)|u|^2\big)dx\Big)^{\frac{1}{2}}.
$$
Since $V(x)\geq 0$ for all $x\in \mathbb{R}^N$, the embedding $E_{\lambda}\hookrightarrow H^{1}(\mathbb{R}^N)$ is continuous,
and so, the embedding $E_{\lambda}\hookrightarrow L^{q}(\mathbb{R}^N)$ is also continuous for all $q\in [2, 2^{*}]$.

For each $R>0$, we define a norm $\| \cdot\|_{\lambda,R}$  on $H^{1}_{0}(B_R(0))$ by
$$
\|u\|_{\lambda,R}=\left(\int_{B_R(0)}(|\nabla u|^{2}+(\lambda V(x)+1)|u|^{2})\,dx\right)^{\frac{1}{2}}
$$
which is equivalent to the usual norm in that space for all $\lambda,R>0$. In what follows we will denote by $E_{\lambda,R}$ the space $H^{1}_{0}(B_R(0))$  endowed with the norm $\|\;\;\;\|_{\lambda,R}$.

Following the approach explored in \cite{cs,sz}, for a small  $\delta>0$, let us define the following functions:

$$
F_1(s)=
\left\{ \begin{array}{lc}
0, & \; s=0 \\
-\frac{1}{2}s^2\log s^2, & \; 0<|s|<\delta \\
-\frac{1}{2}s^2(
\log\delta^2+3)+2\delta|s|-\frac{1}{2}\delta^2,  & \; |s| \geq \delta
\end{array} \right.
$$

\noindent and

$$
F_2(s)=
\left\{ \begin{array}{lc}
0, & \;  |s|<\delta \\
\frac{1}{2}s^2\log(s^2/\delta^2)+2\delta|s|-\frac{3}{2}s^2-\frac{1}{2}\delta^2,  & \; |s| \geq  \delta.
\end{array} \right
.$$
Therefore
\begin{equation}\label{efes}
F_2(s)-F_1(s)=\frac{1}{2}s^2\log s^2, \quad \forall s \in \mathbb{R}.
\end{equation}

It was proved in \cite{cs} and \cite{sz} that $F_1$ and $F_2$ verify the following properties:
\begin{equation*}\label{eq1}
F_1, F_2 \in C^1(\mathbb{R},\mathbb{R}).
\end{equation*}
If $\delta >0$ is small enough, $F_1$ is convex, even, $F_1(s)\geq 0$ for all $ s\in \mathbb{R}$ and
\begin{equation*}\label{eq2}
F'_1(s)s\geq 0, \ s \in \mathbb{R}.
\end{equation*}
For each fixed $p \in (2, 2^*)$, there exists $C>0$ such that
\begin{equation}\label{eq5}
|F'_2(s)|\leq C|s|^{p-1}, \quad \forall s \in \mathbb{R}.
\end{equation}

By a simple observation, it is easy to see that
\begin{equation*}
	\frac{F'_2(s)}{s}\,\,\, \text{is nondecreasing for}\,\, s>0\quad\text{and}\quad\frac{F'_2(s)}{s}\,\, \text{is strictly increasing for}\,\, s>\delta,
\end{equation*}
\begin{equation*}
	\lim_{s\rightarrow +\infty}\frac{F'_2(s)}{s}=+\infty,
\end{equation*}
and
\begin{equation*}
	F'_2(s)\geq 0 \,\, \text{for}\,\, s>0\,\, \text{and}\,\,F'_2(s)> 0 \,\, \text{for}\,\, s>\delta.
\end{equation*}

\subsection{The auxiliary problem}

For each $j\in \{1, \cdots, k\}$,  we fix a bounded open subset $\Omega'_{j}$ with smooth boundary such that
(i)\,$ \overline{\Omega_{j}}\subset \Omega'_{j}$, \\
(ii)\,$\overline{\Omega'_{j}}\cap \overline{\Omega'_{l}}=\emptyset, \,\, \text{for all}\,\, j\neq l$.\\
From now on, we fix a non-empty subset $\Gamma\subset \{1, \cdots, k\}$, $R>0$ such that  $\Omega'_{\Gamma} \subset B_R(0)$ and
$$
\Omega_{\Gamma}=\bigcup_{j\in \Gamma}\Omega_{j},\quad \Omega'_{\Gamma}=\bigcup_{j\in \Gamma}\Omega'_{j}.
$$

Let $l>0$ small,  $a_{0}>0$ such that $\frac{F'_2(a_{0})}{a_{0}}=l$, it is clear that $a_{0}>\delta$. We define
$$
\tilde{F}'_2(s)=
\left\{ \begin{array}{lc}
	F'_2(s),& \; 0\leq s\leq a_{0} \\
	ls,  & \; s \geq a_{0}.
\end{array} \right.
$$
and
\begin{equation*}
G'_2(x, t)=\chi_{\Gamma}(x)F'_2(t)+(1-\chi_{\Gamma}(x))\tilde{F}'_2(t),
\end{equation*}
where
$$
\chi_{\Gamma}(x)=\left\{ \begin{array}{lc}
1 & \; x\in \Omega'_{\Gamma},\\
0 & \;  x\in B_R(0)\backslash \Omega'_{\Gamma}.
\end{array} \right.
$$
Now we consider the existence of solution for the problem
$$
\left\{
\begin{array}{l}
-\Delta u+ (\lambda V(x)+1)u=G'_2(x, u^{+})- F'_1(u),\,\, \text{in} \,\, B_R(0), \\
\, u=0, \quad \quad\quad \quad\quad \quad\quad \quad\quad \quad\quad \quad\quad \quad\quad \quad\,\,\text{on} \,\, \partial B_R(0).
\end{array}
\right.
\eqno{(M_{\lambda, R})}
$$
Notice that, if $u_{\lambda, R}$ is a positive solution of $(M_{\lambda, R})$ with $0< u_{\lambda, R}\leq a_{0}$ for all $x\in B_R(0)\backslash \Omega'_{\Gamma}$, then $G'_2(x, u_{\lambda, R})=F'_2(u_{\lambda, R})$, and so, $u_{\lambda, R}$ is also a positive solution of

$$
\left\{
\begin{array}{l}
-\Delta u+\lambda V(x)u=u \log u^2,\quad\quad\quad \text{in} \,\, B_R(0),\\
u>0, \quad\quad \quad\quad \quad\quad \quad\quad\quad \quad\quad \,\,\,\,\text{in} \,\, B_R(0),\\
u=0, \quad\quad \quad\quad \quad\quad \quad\quad\quad \quad\quad\,\,\,\,\text{on} \,\, \partial B_R(0).
\end{array}
\right.
\eqno{(P_{\lambda, R})}
$$

 Moreover, we shall look for the nontrivial critical points for the functional
\begin{equation*}
{\Phi}_{\lambda, R}(u)=\frac{1}{2}\int_{ B_R(0)}\big (|\nabla u|^2+(\lambda V(x)+1)|u|^2\big)dx+\int_{ B_R(0)} F_{1}(u)dx-\int_{B_R(0)}  G_{2}(x, u^{+})dx,
\end{equation*}
 where
$$
u^{+}=\max\{u(x), 0\} \quad \mbox{and} \quad G_{2}(x, t)=\int_{0}^{t}G'_{2}(x, s)ds \quad \forall\, (x, t) \in B_R(0)\times \mathbb{R}.
$$
It is standard to see that $\Phi_{\lambda, R}\in C^{1}(E_{\lambda,R}, \mathbb{R})$.\\

Our first lemma establishes that the functional ${\Phi}_{\lambda, R}$ satisfies the mountain pass geometry \cite{Willem}.

\begin{lemma}\label{vicente} For all $\lambda> 0$, the functional $\Phi_{\lambda, R}$ satisfies the following conditions:\\
\noindent (i)  $\Phi_{\lambda, R}(0)=0$;\\
\noindent (ii) there exist $\alpha, \rho>0$ such that ${\Phi}_{\lambda, R}(u)\geq \alpha$ for any $u\in E_{\lambda,R}$ with $\|u\|_{\lambda,R}=\rho$;\\
\noindent (iii) there exists $e\in E_{\lambda,R}$ with $\Vert e\Vert_{\lambda, R}>\rho$ such that ${\Phi}_{\lambda, R}(e)<0$.
\end{lemma}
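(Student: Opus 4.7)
The plan is to verify the three conditions essentially as in the classical mountain pass setup, but keeping careful track of the truncated nonlinearity $G'_2$ and the auxiliary functions $F_1, F_2$.

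For (i), observe directly that $F_1(0)=0$ and $G_2(x,0)=0$, so $\Phi_{\lambda,R}(0)=0$.

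For (ii), I first bound $G_2$ by a pure power of $|u|$. On $\Omega'_\Gamma$ we have $G'_2(x,s)=F'_2(s)$, and estimate (2.5) gives $|F'_2(s)|\le C|s|^{p-1}$. On $B_R(0)\setminus\Omega'_\Gamma$ the function $\tilde F'_2$ equals $F'_2$ for $0\le s\le a_0$ and equals $ls$ for $s\ge a_0$; in the latter case, since $p>2$, one has $ls=l\,s^{2-p}\,s^{p-1}\le l\,a_0^{2-p}\,s^{p-1}$. Combining these, $G'_2(x,s)\le C|s|^{p-1}$ for all $s\ge 0$, hence $G_2(x,u^+)\le C|u|^p$. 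Since $F_1\ge 0$, the Sobolev embedding $E_{\lambda,R}\hookrightarrow L^p(B_R(0))$ (valid because $p\in(2,2^*)$) yields
\begin{equation*}
\Phi_{\lambda,R}(u)\ge \tfrac12\|u\|_{\lambda,R}^2-C'\|u\|_{\lambda,R}^p.
\end{equation*}
Since $p>2$, there exist $\rho>0$ small and $\alpha>0$ with $\Phi_{\lambda,R}(u)\ge\alpha$ whenever $\|u\|_{\lambda,R}=\rho$.

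For (iii), I fix a nonnegative $\phi\in C_0^\infty(\Omega_\Gamma)\setminus\{0\}$. Since $\mathrm{supp}\,\phi\subset\Omega_\Gamma\subset\Omega'_\Gamma$ and $V\equiv 0$ on $\Omega_\Gamma$, for $t>0$ we have $(t\phi)^+=t\phi$, $G_2(x,t\phi)=F_2(t\phi)$ on the support, and the potential term vanishes. Using the key identity (2.1), $F_1(s)-F_2(s)=-\tfrac12 s^2\log s^2$, I obtain
\begin{equation*}
\Phi_{\lambda,R}(t\phi)=\tfrac{t^2}{2}\!\int(|\nabla\phi|^2+\phi^2)\,dx-\tfrac{t^2}{2}(\log t^2)\!\int\phi^2\,dx-\tfrac{t^2}{2}\!\int\phi^2\log\phi^2\,dx.
\end{equation*}
The logarithmic term $-\tfrac{t^2}{2}(\log t^2)\int\phi^2\,dx$ dominates all others as $t\to+\infty$, so $\Phi_{\lambda,R}(t\phi)\to-\infty$. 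Choosing $e=t_0\phi$ with $t_0$ large enough finishes (iii).

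The only nontrivial step is (ii), where the main care lies in confirming that the truncated piece $\tilde F'_2(s)=ls$ on $\{s\ge a_0\}$ is still dominated by $|s|^{p-1}$; once this is in hand, both (ii) and (iii) are standard, with (iii) being essentially automatic thanks to (2.1).
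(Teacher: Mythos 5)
Your proof is correct and follows essentially the same route as the paper's: part (ii) rests on the growth bound \eqref{eq5} plus the Sobolev embedding (the paper simply passes through $G_2(x,s)\le F_2(s)$ rather than estimating the truncated branch $\tilde F_2'$ separately, but your direct verification that $ls\le l a_0^{2-p}s^{p-1}$ for $s\ge a_0$ is equally valid), and part (iii) uses the identity \eqref{efes} on a fixed nonnegative test function supported in $\Omega_\Gamma$, exactly as in the paper. Note also that, since $V\equiv 0$ on $\Omega_\Gamma$ and the Sobolev constant for $H^1_0(B_R(0))\hookrightarrow L^p$ is uniform, your $\alpha$, $\rho$ and $t_0$ are automatically independent of $\lambda$ and of $R$ large, which is the form in which the lemma is used later for the level $c_{\lambda,R}$.
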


\begin{proof} \mbox{} \\
\noindent $(i)$: It is clear.\\
\noindent $(ii)$:
 Note that ${\Phi}_{\lambda, R}(u)\geq\dis\frac{1}{2}\|u\|_{\lambda, R}^2-\int_{B_R(0)} F_{2}(u^+)dx$. Hence,  from (\ref{eq5}),
$$
{\Phi}_{\lambda, R}(u)\geq \dis\frac{1}{2}\|u\|_{\lambda, R}^2-C\|u\|_{\lambda, R}^p\geq C_1> 0,
$$
for some $C_1>0$ and $\|u\|_{\lambda, R}>0$ small enough. Here the constant $C_1$ does not depend on $\lambda$ and $R$. \\

\noindent $(iii)$:  Fixing  $0<v\in C_{0}^{\infty}(\Omega_{\Gamma})$, by \eqref{efes},
$$
{\Phi}_{\lambda,R}(sv)= s^2 \left[{\Phi}_{\lambda,R}(v)-\log s \dis\int_{B_{R}(0)} v^2dx\right]\rightarrow - \infty, \, \mbox{as} \, s \rightarrow +\infty.
$$
Thereby, there exists $s_0>0$ independent of $\lambda>0$ and $R>0$ large such that $\Phi_{\lambda,R}(sv)<0$.
\end{proof}

The mountain pass level associated with ${\Phi}_{\lambda,R}$, denoted by $c_{\lambda, R}$, is given by
$$
c_{\lambda, R}=\inf_{\gamma\in \Gamma_{\lambda, R}}\max_{t\in [0, 1]}{\Phi}_{\lambda,R}(\gamma(t)),
$$
where
$$
\Gamma_{\lambda, R}=\{\gamma\in C([0, 1], E_{\lambda,R}): \gamma(0)=0\,\, \text{and}\,\, {\Phi}_{\lambda,R}(\gamma(1))<0\}.
$$
Notice that by Lemma \ref{vicente},
$$
c_{\lambda, R}\geq \alpha>0, \quad \forall\, \lambda>0,\,\, R>0\,\, \text{large}.
$$
In order to show the boundedness of $(PS)$ sequence of $\Phi_{\lambda,R}$, we need a new logarithmic inequality, whose proof can be found in del Pino and Dolbeault \cite[pg 153]{dJ}.

\begin{lemma}  \label{P1}  There exist constants $A,B>0$ such that
	$$
	\int |u|^2\log(|u|^2)\, dx \leq A+ B\log(\|u\|), \quad \forall  u \in H^{1}(\mathbb{R}^N) \setminus \{0\}.
	$$
\end{lemma}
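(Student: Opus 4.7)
The natural strategy is to reduce the statement to the sharp Euclidean logarithmic Sobolev inequality of del Pino and Dolbeault, which in its $L^2$ scale-invariant form reads
$$
\int |u|^2 \log\!\left(\frac{|u|^2}{\|u\|_2^2}\right) dx \leq \frac{N}{2}\,\|u\|_2^2\,\log\!\left(C_N\,\frac{\|\nabla u\|_2^2}{\|u\|_2^2}\right)
$$
for every $u \in H^1(\mathbb{R}^N)\setminus\{0\}$, with $C_N>0$ an explicit dimensional constant. I would begin by splitting the logarithm on the left to isolate the quantity of interest, obtaining
$$
\int u^2 \log u^2\, dx \leq \|u\|_2^2 \log \|u\|_2^2 + \frac{N}{2}\,\|u\|_2^2\,\log\!\left(C_N\,\frac{\|\nabla u\|_2^2}{\|u\|_2^2}\right).
$$

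Next I would exploit the elementary monotonicities $\|u\|_2\leq\|u\|$ and $\|\nabla u\|_2\leq\|u\|$ to dominate the ratio in the inner logarithm by $\log(\|\nabla u\|_2^2/\|u\|_2^2)\leq 2\log\|u\|-\log\|u\|_2^2$, and then collect the several terms of the form $\|u\|_2^2\log\|u\|_2^2$. Using the elementary calculus bound $-1/e\leq s\log s\leq C$ on bounded ranges of $s>0$, these terms are absorbed into an additive constant, while the leading contribution reduces to a multiple of $\|u\|_2^2\,\log\|u\|$. Bounding the prefactor $\|u\|_2^2$ by an ambient constant then produces the advertised form $A+B\log\|u\|$, with $A$ depending on $C_N$ and $N$ and $B$ proportional to $N$.

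The main delicate point is a scaling subtlety: the sharp log-Sobolev inequality is scale invariant but the conclusion is not, so the absorption step is legitimate only on sets where $\|u\|_2$ stays under control. In the intended use of the lemma (boundedness of Palais--Smale sequences for $\Phi_{\lambda,R}$) this is automatic, since such sequences satisfy a uniform $L^2$-bound via the coercivity of $\|\cdot\|_{\lambda,R}$. The cleanest path is therefore: apply the sharp inequality to the normalized function $v=u/\|u\|_2$ with $\|v\|_2=1$, multiply back by $\|u\|_2^2$, and then insert the ambient $L^2$-bound to convert $\|u\|_2^2$ factors into numerical constants, producing the inequality in the compact form stated in Lemma \ref{P1}.
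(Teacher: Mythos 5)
Your route is the one the paper intends: the paper offers no proof of Lemma \ref{P1} beyond the citation to del Pino--Dolbeault \cite{dJ}, and on the cited page the inequality appears (for $p=2$) in the normalized form $\int |w|^2\log |w|^2\,dx\le \frac{N}{2}\log\bigl(C_N\|\nabla w\|_2^2\bigr)$ for $\|w\|_2=1$. Applying this to $v=u/\|u\|_2$ and expanding the logarithm, exactly as you propose, gives $\int v^2\log v^2\,dx\le \frac{N}{2}\log C_N+N\log\|\nabla v\|_2\le A+B\log\|v\|$ at once, so under the normalization the lemma is immediate and your derivation matches the source.

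The caveat you flag, however, is not a blemish on your argument but a genuine defect of the statement itself, and you are right not to sweep it under the rug. As printed, the inequality is quantified over all $u\in H^{1}(\mathbb{R}^N)\setminus\{0\}$, and in that generality it is false: replacing $u$ by $tu$ turns the left-hand side into $t^2\int u^2\log u^2\,dx+t^2\log(t^2)\|u\|_2^2$, which grows like $t^2\log t$ as $t\to+\infty$, while $A+B\log\|tu\|=A+B\log t+B\log\|u\|$ grows only like $\log t$; no constants $A,B$ can compensate. Hence the lemma must be read with the normalization $\|u\|_2=1$ of \cite{dJ}, or restricted to sets on which $\|u\|_2$ is a priori controlled --- precisely the "absorption is legitimate only where $\|u\|_2$ stays under control" point in your plan. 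Note that this is not purely cosmetic: Corollary \ref{C1} and the proof of Lemma \ref{boundedness} apply the estimate to $u_n^{+}$ with no prior $L^2$ bound in hand, so the gap propagates there as well. In summary, you have proved the correct (normalized) version of the lemma by the same reduction the paper relies on, and the discrepancy you identified lies in the paper's formulation rather than in your proof.
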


As an immediate consequence we have the corollary:

\begin{corollary}  \label{C1}  There exist $C,\Theta>0$ such that if $u \in H^{1}(\mathbb{R}^N)$ and $\|u\| \geq \Theta$, then
	$$
	\int \log(|u|^2)|u|^2\,dx \leq C (1+ \|u\|).
	$$
\end{corollary}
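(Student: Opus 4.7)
The corollary is essentially a direct consequence of Lemma \ref{P1}, so my plan is to exploit the fact that $\log$ grows strictly slower than the identity and absorb the logarithmic factor into a linear one on the right-hand side.

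The first step is to invoke Lemma \ref{P1} directly: for every nonzero $u \in H^1(\mathbb{R}^N)$ we already have
$$
\int |u|^2\log(|u|^2)\,dx \leq A + B\log(\|u\|),
$$
with constants $A,B>0$ independent of $u$. Thus the remaining work is purely elementary: I need to control the term $B\log(\|u\|)$ by a linear function of $\|u\|$ provided $\|u\|$ is sufficiently large.

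For the second step I would use the elementary inequality $\log t = o(t)$ as $t\to\infty$. Concretely, since $\lim_{t\to\infty}\frac{\log t}{t}=0$, there exists $\Theta>0$ such that $B\log t \leq t$ whenever $t \geq \Theta$. With this choice of $\Theta$, whenever $\|u\| \geq \Theta$ we obtain
$$
\int |u|^2\log(|u|^2)\,dx \leq A + B\log(\|u\|) \leq A + \|u\| \leq C(1+\|u\|),
$$
where one can take, for instance, $C = \max\{A,1\}$. This finishes the argument; there is no genuine obstacle here since the two estimates compose trivially. The only point worth stating explicitly is that $\Theta$ depends on $B$ (hence on the constant coming from Lemma \ref{P1}) but not on $u$, which is exactly what the corollary requires for its later use in showing boundedness of Palais--Smale sequences of $\Phi_{\lambda,R}$.
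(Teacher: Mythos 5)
Your argument is correct and is precisely the derivation the paper has in mind: the paper states the corollary as an "immediate consequence" of Lemma \ref{P1} without writing out a proof, and your two steps (apply the lemma, then absorb $B\log(\|u\|)$ into $\|u\|$ for $\|u\|\geq\Theta$ using $\log t=o(t)$) supply exactly the missing elementary details. Nothing further is needed.
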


By the definition of $G_2$, it is easy to see that
$$
G_2(x,s) \leq F_2(s), \quad s \geq 0.
$$
Consequently
\begin{align}\label{ine}
{\Phi}_{\lambda,R}(u) \geq  \displaystyle \frac{1}{2}\int_{ B_R(0)}\big (|\nabla u|^2+(\lambda V(x)+1)|u|^2\big)dx\displaystyle -\frac{1}{2}\int_{ B_R(0)}(u^{+})^{2} \log(u^{+})^{2},
\end{align}
for all $ u \in E_{\lambda,R}$.

\begin{lemma}\label{boundedness}
 Let $(u_n) \subset E_{\lambda,R}$ be a sequence such that $(\Phi_{\lambda,R}(u_n))$ is bounded in $\mathbb{R}$. Then, $(u_n)$ is a bounded sequence in $E_{\lambda,R}$.
\end{lemma}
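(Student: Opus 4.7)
My plan is to argue by contradiction using the key inequality (\ref{ine}) already established, combined with the logarithmic estimate of Corollary \ref{C1}. The strategy is the standard one: the quadratic term $\tfrac{1}{2}\|u_n\|_{\lambda,R}^2$ on the right-hand side of (\ref{ine}) must eventually dominate the logarithmic contribution $\tfrac{1}{2}\int (u_n^+)^2\log(u_n^+)^2\,dx$, because the latter grows at most linearly in the $H^1$-norm thanks to del Pino--Dolbeault's inequality.

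More precisely, suppose $M>0$ is an upper bound for $\Phi_{\lambda,R}(u_n)$ and argue by contradiction, assuming that, up to a subsequence, $\|u_n\|_{\lambda,R}\to+\infty$. From (\ref{ine}) I would immediately extract
\begin{equation*}
\tfrac{1}{2}\|u_n\|_{\lambda,R}^2 \;\leq\; M+\tfrac{1}{2}\int_{B_R(0)}(u_n^+)^2\log(u_n^+)^2\,dx.
\end{equation*}
Since $V\geq 0$ and $u_n\in H^1_0(B_R(0))$, extending $u_n$ by zero outside $B_R(0)$ gives $\|u_n^+\|_{H^1(\mathbb{R}^N)}\leq \|u_n\|_{H^1(\mathbb{R}^N)}\leq \|u_n\|_{\lambda,R}$.

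Now the estimate of the logarithmic term splits into two cases depending on the size of $\|u_n^+\|$. If $\|u_n^+\|\geq \Theta$ (with $\Theta$ the threshold from Corollary \ref{C1}), then
\begin{equation*}
\int_{B_R(0)}(u_n^+)^2\log(u_n^+)^2\,dx \;\leq\; C\bigl(1+\|u_n^+\|\bigr)\;\leq\; C\bigl(1+\|u_n\|_{\lambda,R}\bigr).
\end{equation*}
If instead $\|u_n^+\|<\Theta$, then Lemma \ref{P1} (applied to $u_n^+$, or a direct observation when $u_n^+\equiv 0$) yields a uniform bound $\int(u_n^+)^2\log(u_n^+)^2\,dx\leq A+B\log\Theta$, which is a constant. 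Either way,
\begin{equation*}
\tfrac{1}{2}\|u_n\|_{\lambda,R}^2 \;\leq\; M+\widetilde{C}\bigl(1+\|u_n\|_{\lambda,R}\bigr)
\end{equation*}
for some $\widetilde{C}>0$ independent of $n$, and this contradicts $\|u_n\|_{\lambda,R}\to+\infty$.

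The only genuine subtlety, and the step I would write most carefully, is the passage $\|u_n^+\|\leq \|u_n\|_{\lambda,R}$: the Corollary gives a bound in terms of the full $H^1$-norm of $u_n^+$, not of $u_n$, so one must notice that truncation does not increase the $H^1$-norm and that the $H^1$-norm is controlled by $\|\cdot\|_{\lambda,R}$ thanks to $V\geq 0$. Once this is in place, the rest of the argument is just the observation that quadratic growth beats the sum of a constant and a linear term, so the proof is complete.
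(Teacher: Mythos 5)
Your proof is correct and follows essentially the same route as the paper's: starting from inequality (\ref{ine}), splitting into cases according to whether $\|u_n^+\|$ exceeds the threshold $\Theta$, and invoking Corollary \ref{C1} in one case and Lemma \ref{P1} in the other to conclude that the quadratic term dominates. The contradiction framing is cosmetic, and your explicit remark that $\|u_n^+\|\leq\|u_n\|_{\lambda,R}$ (truncation does not increase the $H^1$-norm, and $V\geq 0$) is a detail the paper leaves implicit but is handled correctly here.
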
	
\begin{proof}	 Since  $(\Phi_{\lambda,R}(u_n))$ is bounded, there exists $M>0$ such that
$$
M \geq \Phi_{\lambda,R}(u_n), \quad \forall n \in \mathbb{N}.
$$
Thus, by \eqref{ine},
$$
M \geq   \displaystyle \frac{1}{2}\int_{ B_R(0)}\big (|\nabla u_n|^2+(\lambda V(x)+1)|u_n|^2\big)dx\displaystyle -\frac{1}{2}\int_{ B_R(0)}(u_n^{+})^{2} \log(u_n^{+})^{2}, \\
$$
that is,
\begin{equation*}
M \geq \displaystyle \frac{1}{2}\|u_n\|_{\lambda, R}^{2}-\frac{1}{2}\int_{ B_R(0)} |u^+_n|^{2}\log(|u^+_n|^{2})\,dx,
\end{equation*}
from where it follows that
\begin{equation} \label{Z1}
\|u_n\|_{\lambda, R}^{2} \leq 2M+\int_{ B_R(0)} |u^+_n|^{2}\log(|u^+_n|^{2})\,dx, \quad \forall n \in \mathbb{N}.
\end{equation}
Without loss of generality we will assume that $u^+_n \not=0$, because otherwise, we have the inequality
$$
\|u_n\|_{\lambda, R}^{2} \leq 2M.
$$
From this, assume that there exists $n \in \mathbb{N}$ such that $\|u^+_n\|_{\lambda, R} \geq \Theta$. By Corollary \ref{C1} and \eqref{Z1},
$$
\|u_n\|_{\lambda, R}^{2} \leq 2M + C(1+\|u^+_n\|_{\lambda, R}) \leq 2M + C(1+\|u_n\|_{\lambda, R}).
$$
If $0<\|u^+_n\|_{\lambda, R} \leq \Theta$, Lemma \ref{P1} combined with \eqref{Z1} gives
$$
\|u_n\|_{\lambda, R}^{2}\leq 2M+A +B\log(\Theta).
$$
The above analysis ensures that $(u_n)$ is bounded in $E_{\lambda,R}$.
\end{proof}

As a byproduct of the last lemma we have the following result.
\begin{corollary} \label{coro} If $u_n \in E_{\lambda_n,R_n}$ with $\lambda_n,R_n \to +\infty$ and $(\Phi_{\lambda_n,R_n}(u_n))$ is bounded in $\mathbb{R}$, then $(\|u_{\lambda_n,R_n}\|_{\lambda_n,R_n})$ is also bounded.
	
\end{corollary}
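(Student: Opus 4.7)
The plan is to essentially repeat the argument of Lemma \ref{boundedness} verbatim, after checking that none of the constants involved depend on $\lambda$ or $R$. Let me set $M$ such that $\Phi_{\lambda_n,R_n}(u_n) \leq M$ for all $n$, and view each $u_n$ (and therefore $u_n^+$) as an element of $H^1(\mathbb{R}^N)$ by extending by zero outside $B_{R_n}(0)$.

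First I would observe the basic monotonicity of norms: since $V \geq 0$ and $\lambda_n > 0$,
\begin{equation*}
\|u_n^+\|^2 = \int \bigl(|\nabla u_n^+|^2 + |u_n^+|^2\bigr)\, dx \leq \int_{B_{R_n}(0)}\bigl(|\nabla u_n|^2 + (\lambda_n V(x)+1)|u_n|^2\bigr)\, dx = \|u_n\|_{\lambda_n, R_n}^2.
\end{equation*}
This is the one ingredient that changes from the fixed-$(\lambda,R)$ setting: we need to know that passing from the $E_{\lambda_n,R_n}$ norm to the standard $H^1$ norm only decreases things, which it does.

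Next, invoking inequality \eqref{ine} with $(\lambda,R) = (\lambda_n,R_n)$, I get
\begin{equation*}
\|u_n\|_{\lambda_n,R_n}^2 \leq 2M + \int_{B_{R_n}(0)} |u_n^+|^2 \log(|u_n^+|^2)\, dx.
\end{equation*}
Now I split into two cases exactly as in Lemma \ref{boundedness}, using the universal constants $A,B,C,\Theta$ from Lemma \ref{P1} and Corollary \ref{C1}, none of which depend on $\lambda$ or $R$. If $\|u_n^+\| \geq \Theta$, Corollary \ref{C1} applied to the zero extension gives $\int |u_n^+|^2 \log(|u_n^+|^2)\, dx \leq C(1 + \|u_n^+\|) \leq C(1 + \|u_n\|_{\lambda_n,R_n})$, and the resulting quadratic-vs-linear inequality bounds $\|u_n\|_{\lambda_n,R_n}$. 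If $0 < \|u_n^+\| < \Theta$, Lemma \ref{P1} gives $\int |u_n^+|^2 \log(|u_n^+|^2)\, dx \leq A + B\log \Theta$, and the bound is immediate; the case $u_n^+ \equiv 0$ is even simpler since then the log integral vanishes. In all cases $\|u_n\|_{\lambda_n,R_n}$ is bounded by a constant depending only on $M$ and the universal constants $A,B,C,\Theta$.

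The main (and only) thing to verify is the uniformity claim just made, namely that the constants in Lemma \ref{P1} and Corollary \ref{C1} are genuinely independent of $\lambda$ and $R$ because those statements are about $H^1(\mathbb{R}^N)$ itself, not about the weighted spaces $E_{\lambda,R}$. Once this is noted, the proof of Lemma \ref{boundedness} transports word-for-word, and no new estimate is required.
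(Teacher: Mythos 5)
Your proof is correct and follows essentially the same route as the paper, which simply declares the corollary a byproduct of Lemma \ref{boundedness}: the point is precisely that inequality \eqref{ine} holds for every $\lambda,R$ and that the constants $A,B,C,\Theta$ of Lemma \ref{P1} and Corollary \ref{C1} are intrinsic to $H^{1}(\mathbb{R}^N)$, hence independent of $\lambda_n$ and $R_n$. Your explicit check that $\|u_n^{+}\| \leq \|u_n\|_{\lambda_n,R_n}$ (so that the $H^1$-based logarithmic estimates transfer to the weighted norms) is exactly the uniformity the paper leaves implicit.
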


By Lemma \ref{boundedness}, it is also easy to obtain the boundedness of $(PS)$ sequences for $\Phi_{\lambda,R}$.

\begin{corollary}\label{coro0}
 If $(u_n)$ is a $(PS)$ sequence for $\Phi_{\lambda,R}$, then $(u_n)$ is bounded in $E_{\lambda,R}$.
	
\end{corollary}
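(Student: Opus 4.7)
The plan is to observe that Corollary \ref{coro0} follows essentially for free from Lemma \ref{boundedness}. By definition, a $(PS)$ sequence $(u_n)$ for $\Phi_{\lambda,R}$ is a sequence in $E_{\lambda,R}$ such that $(\Phi_{\lambda,R}(u_n))$ is bounded in $\mathbb{R}$ and $\Phi'_{\lambda,R}(u_n) \to 0$ in $E_{\lambda,R}^{*}$. Only the first of these two conditions needs to be used: it is precisely the hypothesis of Lemma \ref{boundedness}, whose conclusion is exactly that $(u_n)$ is bounded in $E_{\lambda,R}$. So the proof consists of one sentence: invoke Lemma \ref{boundedness} with the boundedness of $(\Phi_{\lambda,R}(u_n))$ coming from the $(PS)$ assumption.

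It is worth emphasizing in the write-up that the derivative condition $\Phi'_{\lambda,R}(u_n)\to 0$ is not used at all. This is somewhat unusual: for standard superlinear problems one bounds $(PS)$ sequences through a combination of $\Phi(u_n)$ and $\langle \Phi'(u_n),u_n\rangle$ (via an Ambrosetti--Rabinowitz type condition), but here the logarithmic nonlinearity $u\log u^2$ does not satisfy such a condition. The substitute is already built into Lemma \ref{boundedness}, where the del Pino--Dolbeault logarithmic inequality (Lemma \ref{P1}) together with its Corollary \ref{C1} controls $\int (u_n^+)^2\log(u_n^+)^2\,dx$ in terms of $\|u_n\|_{\lambda,R}$, so that the inequality \eqref{Z1} gives a bound of the form $\|u_n\|_{\lambda,R}^2 \leq C_1 + C_2 \|u_n\|_{\lambda,R}$ which immediately yields boundedness.

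Consequently there is no genuine obstacle in this proof; the only point to get right is the formal verification that the $(PS)$ definition supplies the hypothesis of Lemma \ref{boundedness}. I would write this simply as: let $(u_n)$ be a $(PS)$ sequence, so in particular $(\Phi_{\lambda,R}(u_n))$ is bounded; applying Lemma \ref{boundedness} yields that $(u_n)$ is bounded in $E_{\lambda,R}$, which finishes the proof.
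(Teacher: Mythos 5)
Your proposal is correct and is exactly the paper's argument: the paper derives Corollary \ref{coro0} directly from Lemma \ref{boundedness}, using only that a $(PS)$ sequence has bounded energy values. Your additional remark that the condition $\Phi'_{\lambda,R}(u_n)\to 0$ is never used is accurate and consistent with how the lemma is stated and applied in the paper.
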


Our next lemma shows that $\Phi_{\lambda,R}$ verifies the $(PS)$ condition.

\begin{lemma} \label{compacidadeR} The functional $\Phi_{\lambda,R}$ satisfies the $(PS)$ condition.
	
\end{lemma}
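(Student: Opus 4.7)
The plan is to follow the classical two-step argument for $(PS)$ condition, adapted to the logarithmic setting. Let $(u_n) \subset E_{\lambda,R}$ be a $(PS)$ sequence for $\Phi_{\lambda,R}$. By Corollary \ref{coro0} the sequence is bounded in $E_{\lambda,R} = H^1_0(B_R(0))$, so up to extraction $u_n \rightharpoonup u$ weakly in $E_{\lambda,R}$. Since $B_R(0)$ is a bounded smooth domain and the norm $\|\cdot\|_{\lambda,R}$ is equivalent to the usual $H^1_0$-norm, Rellich-Kondrachov yields $u_n \to u$ strongly in $L^q(B_R(0))$ for every $q \in [1,2^*)$ and, along a further subsequence, $u_n(x) \to u(x)$ almost everywhere.

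The central identity is obtained by testing $\Phi'_{\lambda,R}(u_n)$ against $u_n - u$:
$$
\Phi'_{\lambda,R}(u_n)(u_n-u) = \|u_n\|_{\lambda,R}^2 - \langle u_n, u\rangle_{\lambda,R} + \int_{B_R(0)} F'_1(u_n)(u_n-u)\,dx - \int_{B_R(0)} G'_2(x,u_n^+)(u_n-u)\,dx,
$$
whose left-hand side vanishes in the limit because $\Phi'_{\lambda,R}(u_n) \to 0$ in the dual and $(u_n - u)$ is bounded. Weak convergence forces $\langle u_n, u\rangle_{\lambda,R} \to \|u\|_{\lambda,R}^2$, so once the two nonlinear integrals on the right are shown to tend to zero, I will conclude $\|u_n\|_{\lambda,R} \to \|u\|_{\lambda,R}$, and in the Hilbert space $E_{\lambda,R}$ this upgrades weak convergence to strong convergence.

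For the $G'_2$-term, the construction of $\tilde{F}'_2$ together with \eqref{eq5} yields the growth bound $|G'_2(x,t)| \leq C(|t|+|t|^{p-1})$ for some $p \in (2,2^*)$, and then H\"older combined with the strong convergences $u_n \to u$ in $L^2$ and in $L^p$ immediately gives $\int G'_2(x,u_n^+)(u_n-u)\,dx \to 0$. For the $F'_1$-term an explicit computation shows $F'_1(s) = -s(\log s^2 + 1)$ on $(-\delta,\delta)$ and a linear expression outside; using the elementary estimate $|s\log s^2| \leq C_\epsilon |s|^{1-\epsilon}$ on bounded sets one obtains the global bound $|F'_1(s)| \leq C_\epsilon(|s|^{1-\epsilon}+|s|)$ for any small $\epsilon>0$. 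Since $B_R(0)$ has finite measure and $(u_n)$ is bounded in $L^2(B_R(0))$, this places $\{F'_1(u_n)\}$ in a bounded subset of $L^2(B_R(0))$, and Cauchy-Schwarz against $u_n - u \to 0$ in $L^2$ closes the estimate.

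The main obstacle is precisely the near-logarithmic singularity of $F'_1$ at the origin, which prevents the direct use of a subcritical power bound and is one of the reasons the standard del Pino-Felmer compactness argument cannot be transplanted verbatim. The trade-off $|s\log s^2| \leq C_\epsilon |s|^{1-\epsilon}$ sacrifices a little decay at zero but secures the uniform $L^2$-control needed above, after which the remainder of the proof reduces to the routine Hilbert-space argument ``weak convergence plus norm convergence yields strong convergence''.
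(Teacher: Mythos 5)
Your proof is correct and follows essentially the same route as the paper: boundedness via Corollary \ref{coro0}, Rellich--Kondrachov on the bounded ball, growth bounds on $G'_2$ and $F'_1$ to kill the nonlinear integrals, and the standard Hilbert-space upgrade from weak plus norm convergence to strong convergence. Your treatment of $F'_1$ near the origin via $|s\log s^2|\leq C_\epsilon|s|^{1-\epsilon}$ is in fact more explicit than the paper's stated bound $|F'_1(t)|\leq C(1+|t|^{p})$, but both suffice on the bounded domain, so the arguments are substantively identical.
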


\begin{proof} Let $(u_n) \subset E_{\lambda,R}$ be a $(PS)_d$ sequence for $\Phi_{\lambda,R}$, that is,
$$
\Phi_{\lambda,R}(u_n) \to d \quad \mbox{and} \quad \Phi'_{\lambda,R}(u_n) \to 0.
$$	
By Corollary \ref{coro0}, the sequence $(u_n)$ is bounded in $E_{\lambda,R}$, then without loss of generality
 we can assume that there exist $u \in E_{\lambda,R}$, and a subsequence of $(u_n)$, still denoted by itself, such that
$$
u_n \rightharpoonup u, \quad \mbox{in}\,\,  E_{\lambda,R},
$$
$$
u_n \to u \quad \mbox{in} \quad L^{q}(B_R(0)), \quad \forall\, q \in [1,2^*),
$$
and
$$
u_n(x) \to u(x) \quad \mbox{a.e. in} \quad B_R(0).
$$
For all $t \in \mathbb{R}$ and fixing $p \in (2,2^*)$, there exists $C>0$ such that
$$
|G'_{2}(x, t)| \leq l\vert t\vert+C|t|^{p-1}, \quad \forall t \in \mathbb{R},
$$	
and
$$
|F'_{1}(t)| \leq C(1+|t|^{p}), \quad \forall t \in \mathbb{R}.
$$	
Hence, by the Sobolev embeddings,
$$
\int_{B_R(0)}G'_{2}(x, u_n^{+})u_n^{+}\,dx \to \int_{B_R(0)}G'_{2}(x, u^{+})u^{+}\,dx, \quad \int_{B_R(0)}F'_{1}(u_n)u_n \,dx \to \int_{B_R(0)}F'_{1}(u)u\,dx,
$$
and
$$
\int_{B_R(0)}G'_{2}(x, u_n^{+})v\,dx \to \int_{B_R(0)}G'_{2}(x, u^{+})v\,dx, \quad \int_{B_R(0)}F'_{1}(u_n)v\,dx \to \int_{B_R(0)}F'_{1}(u)v\,dx,
$$
for any $ v \in E_{\lambda,R}$.\\

Now, using the limits $\Phi'_{\lambda,R}(u_n)u_n=\Phi'_{\lambda,R}(u_n)u=o_{n}(1)$, we get
\begin{eqnarray*}
\|u_n-u\|^{2}_{\lambda, R}&=&\int_{B_R(0)}\Big(G'_{2}(x, u_n^{+})- G'_{2}(x, u^{+})\Big)(u_n^{+}-u^{+})\,dx\\
&&-\int_{B_R(0)}\Big(F'_{1}(u_n)-F'_{1}(u)\Big)(u_n-u)\,dx+o_{n}(1)=o_{n}(1),
\end{eqnarray*}
showing the desired result. 	
\end{proof}

\vspace{0.5 cm}

\begin{theorem}\label{teorema1}
The problem  $(M_{\lambda, R})$  has a positive solution $u_{\lambda, R}\in E_{\lambda,R}$ such that $\Phi_{\lambda,R}(u_{\lambda, R})=c_{\lambda, R}$,
where $c_{\lambda, R}$ denotes the mountain pass level associated with $\Phi_{\lambda,R}$.
\end{theorem}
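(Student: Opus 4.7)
The strategy is a direct application of the Mountain Pass Theorem of Ambrosetti--Rabinowitz (see, e.g., \cite{Willem}) to the $C^{1}$ functional $\Phi_{\lambda, R}$ on the Hilbert space $E_{\lambda, R}$. The geometric hypotheses of the theorem are supplied by Lemma \ref{vicente}, while the Palais--Smale condition at every level is the content of Lemma \ref{compacidadeR}. Consequently there exists a critical point $u_{\lambda, R} \in E_{\lambda, R}$ with $\Phi_{\lambda, R}(u_{\lambda, R}) = c_{\lambda, R}$; since $c_{\lambda, R} \geq \alpha > 0$, this critical point is nontrivial.

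To establish that $u_{\lambda, R}$ is nonnegative, I would test the identity $\Phi_{\lambda, R}'(u_{\lambda, R}) = 0$ against the negative part $\varphi = u_{\lambda, R}^{-}$. The pointwise identities $\nabla u \cdot \nabla u^{-} = -|\nabla u^{-}|^{2}$ and $u\, u^{-} = -(u^{-})^{2}$ (valid a.e.) contribute $-\|u_{\lambda, R}^{-}\|_{\lambda, R}^{2}$ to the linear part. The term involving $G_{2}'(x, u_{\lambda, R}^{+}) u_{\lambda, R}^{-}$ vanishes since $u^{+} u^{-} \equiv 0$ and $G_{2}'(x, 0) = 0$. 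Finally, by oddness of $F_{1}'$ (inherited from the evenness of $F_{1}$) combined with the sign condition $F_{1}'(s) s \geq 0$, the integral $\int F_{1}'(u_{\lambda, R}) u_{\lambda, R}^{-}\, dx$ is nonpositive. Combining these three observations yields $\|u_{\lambda, R}^{-}\|_{\lambda, R}^{2} \leq 0$, forcing $u_{\lambda, R}^{-} \equiv 0$ and hence $u_{\lambda, R} \geq 0$ in $B_{R}(0)$.

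For the strict positivity, I would invoke elliptic regularity: since $G_{2}'(x, \cdot) - F_{1}'(\cdot)$ is continuous on $[0, +\infty)$ with subcritical growth (cf.\ \eqref{eq5} together with the linear truncation defining $\tilde{F}_{2}'$), an $L^{p}$-bootstrap yields $u_{\lambda, R} \in C^{1,\alpha}(\overline{B_{R}(0)})$. Writing the equation in the form $-\Delta u_{\lambda, R} + M u_{\lambda, R} \geq 0$ for a sufficiently large constant $M$ (absorbing $\lambda V(x) + 1$ together with the locally bounded portion of the nonlinearity on $\{u_{\lambda,R} \geq \eta\}$ for any $\eta > 0$), the strong maximum principle then forces $u_{\lambda, R} > 0$ throughout the connected open set $B_{R}(0)$.

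The principal delicacy lies in handling the logarithmic singularity of $F_{1}'(u)/u$ as $u \to 0^{+}$ when invoking the strong maximum principle; fortunately this singularity has the favorable sign (it pushes the solution away from zero), and the argument can be carried out via a V\'azquez-type strong maximum principle, as standard in the literature on logarithmic Schr\"odinger equations (see, e.g., \cite{sz, AlvesChao2}).
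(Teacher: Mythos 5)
Your proposal is correct and follows essentially the same route as the paper: the Mountain Pass Theorem applied via Lemma \ref{vicente} and the $(PS)$ condition of Lemma \ref{compacidadeR} (with Corollary \ref{coro0}), nonnegativity by testing $\Phi_{\lambda,R}'(u_{\lambda,R})$ against the negative part, and strict positivity via V\'azquez's strong maximum principle \cite{V}. You merely supply more detail than the paper does at the nonnegativity and positivity steps; the only quibble is that the coefficient $F_1'(u)/u\sim -\log u^2\to+\infty$ as $u\to 0^{+}$ is the \emph{delicate} direction for a maximum principle rather than the ``favorable'' one, but V\'azquez's integral criterion covers exactly this case, so the conclusion stands.
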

\begin{proof}
The existence of the nontrivial solution $u_{\lambda, R}$ is an immediate result of Lemma \ref{vicente}, Corollary \ref{coro0} and Lemma \ref{compacidadeR}.
The function $u_{\lambda, R}$ is nonnegative, because
$$
\Phi_{\lambda,R}(u_{\lambda, R})(u_{\lambda, R}^{-})=0\Rightarrow u_{\lambda, R}^{-}=0,
$$
where $u_{\lambda, R}^{-}=\min \{u_{\lambda, R}, 0 \}$.  By the maximum principle(see \cite[Theorem 1]{V}), we have
$u_{\lambda, R}(x)>0$ for a.e. $x\in B_R(0)$.
\end{proof}

\subsection{The $(PS)_{\infty, R}$ condition}\label{tpm}
In the sequel, for each $R>0$, we study the behavior of a $(PS)_{\infty, R}$ sequence for $\Phi_{\lambda,R}$, that is, a sequence $(u_{n}) \subset H^{1}_0(B_R(0))$ satisfying
$$\aligned
& u_{n}\in E_{\lambda_n,R}\,\, \text{and}\,\, \lambda_{n}\rightarrow\infty,\\
&\Phi_{\lambda_{n},R}(u_n)\rightarrow c,\\
&\Vert \Phi'_{\lambda_{n},R}(u_n)\Vert \rightarrow 0.
\endaligned \eqno$$

\begin{proposition}\label{prop1}
Let $(u_{n}) \subset H^{1}_0(B_R(0))$ be a $(PS)_{\infty, R}$ sequence. Then, for some subsequence, still denoted by $(u_{n})$,
there exists $u\in H^{1}_0(\Omega)$ such that
$$
u_{n}\rightharpoonup u\quad\text{in}\,\, H^{1}_0(B_R(0)).
$$
Moreover,\\
\noindent (i) $u_{n}$ converges to $u$ in the strong sense, that is,
$$
\Vert u_{n}-u\Vert_{\lambda_{n}, R}\rightarrow 0.
$$
Hence
$$
u_{n}\rightarrow u\quad \text{in}\,\, H^{1}_0(B_R(0)).
$$
\noindent (ii)  $u\equiv0$ in $B_R(0)\backslash \Omega_{\Gamma}$ and $u$ is a  solution of
$$
\left\{
\begin{array}{lc}
-\Delta u=u \log u^2, & \mbox{in} \quad \Omega_{\Gamma}, \\
u=0,  & \mbox{on} \quad \partial \Omega_{\Gamma}.  \\
\end{array}
\right.
\eqno{(P_{\infty, \Gamma})}
$$

\noindent (iii) $u_{n}$ also satisfies
$$\aligned
& \lambda_{n}\int_{B_R(0)} V(x)|u_{n}|^2dx \rightarrow 0,\\
&\Vert u_{n}\Vert^{2}_{\lambda_{n},B_R(0)\backslash \Omega_{\Gamma}}\rightarrow 0,\\
&\Vert u_{n}\Vert^{2}_{\lambda_{n}, \Omega'_{j}}\rightarrow \int_{\Omega_{j}}\big (|\nabla u|^2+|u|^2\big)dx,\,\,\text{for all}\,\, j\in \Gamma.
\endaligned \eqno$$

\end{proposition}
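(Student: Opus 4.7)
The plan is to run a penalization-type argument in the spirit of \cite{Alves, DT}, adapted to the logarithmic setting through the splitting $\tfrac{1}{2}s^2\log s^2=F_2(s)-F_1(s)$. By Corollary \ref{coro} the sequence $(\|u_n\|_{\lambda_n,R})$ is bounded; in particular $(u_n)$ is bounded in $H^1_0(B_R(0))$, and after passing to a subsequence there is $u\in H^1_0(B_R(0))$ with $u_n\rightharpoonup u$ weakly in $H^1_0(B_R(0))$, $u_n\to u$ strongly in $L^q(B_R(0))$ for every $q\in[1,2^*)$ and a.e.\ in $B_R(0)$. Testing $\Phi'_{\lambda_n,R}(u_n)\to 0$ against $u_n^-\in E_{\lambda_n,R}$ and using $F'_1(s)s\ge 0$ together with $G'_2(x,u_n^+)u_n^-\equiv 0$ produces $\|u_n^-\|_{\lambda_n,R}\to 0$, so $u\ge 0$. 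From $\lambda_n\int V u_n^2\,dx\le\|u_n\|^2_{\lambda_n,R}\le C$ and $\lambda_n\to\infty$, Fatou gives $\int V u^2\,dx=0$, so $u$ is supported in $\overline\Omega\cap B_R(0)$; by the disjointness of the $\overline{\Omega_j}$ in $(V3)$, $u$ decomposes as $u=\sum_{j=1}^{k}u|_{\Omega_j}$ with $u|_{\Omega_j}\in H^1_0(\Omega_j)$.

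The heart of the argument, and the only place where the penalization is really used, is ruling out $u|_{\Omega_j}\not\equiv 0$ for $j\notin\Gamma$. For such $j$ we have $\chi_\Gamma\equiv 0$ on $\Omega_j$, so passing to the limit in $\Phi'_{\lambda_n,R}(u_n)\varphi=o(1)$ against $\varphi\in C_c^\infty(\Omega_j)$ --- using the growth estimates $|G'_2(x,s)|\le C|s|^{p-1}$ and $|F'_1(s)|\le C(1+|s|)$ together with the strong $L^q$ convergence --- produces the weak equation $-\Delta u+u=\tilde F'_2(u^+)-F'_1(u)$ on $\Omega_j$. Testing against $u|_{\Omega_j}$, using $F'_1(u)u\ge 0$ and the pointwise bound $\tilde F'_2(s)\le ls$ for $s\ge 0$ (a consequence of the monotonicity of $F'_2(s)/s$ together with the identity $F'_2(a_0)/a_0=l$), yields
$$
\int_{\Omega_j}(|\nabla u|^2+u^2)\,dx\le l\int_{\Omega_j}u^2\,dx,
$$
and the smallness of $l$ (any $l<1$ suffices) forces $u|_{\Omega_j}\equiv 0$. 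Consequently $u\in H^1_0(\Omega_\Gamma)$.

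For the equation in (ii), I would pass to the limit in $\Phi'_{\lambda_n,R}(u_n)\varphi=o(1)$ against $\varphi\in C_c^\infty(\Omega_\Gamma)$, where $\chi_\Gamma\equiv 1$ and hence $G'_2=F'_2$, obtaining $-\Delta u+u=F'_2(u)-F'_1(u)$ on $\Omega_\Gamma$ (using $u\ge 0$); differentiating $F_2-F_1=\tfrac12 s^2\log s^2$ gives $F'_2(s)-F'_1(s)=s\log s^2+s$, so the equation reduces to $-\Delta u=u\log u^2$, while $u|_{\partial\Omega_\Gamma}=0$ is automatic from $u\in H^1_0(\Omega_\Gamma)$. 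For (i) I would compare $\Phi'_{\lambda_n,R}(u_n)u_n=o(1)$ with $\Phi'_{\lambda_n,R}(u_n)u=o(1)$ (note $u\in E_{\lambda_n,R}$ for every $n$ since $Vu\equiv 0$): the second passes to the limit by weak convergence and the $L^p$ estimates above, yielding $\|u\|^2=\int G'_2(x,u^+)u\,dx-\int F'_1(u)u\,dx$. Combined with the limit of the first (again by $L^p$ convergence of the nonlinear parts), this gives $\lim_n\|u_n\|^2_{\lambda_n,R}=\|u\|^2$; weak lower semicontinuity then upgrades $u_n\rightharpoonup u$ to strong convergence in $H^1_0(B_R(0))$ and forces $\lambda_n\int V u_n^2\,dx\to 0$.

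The three identities in (iii) are then consequences of the strong convergence: the first is exactly $\lambda_n\int V u_n^2\,dx\to 0$ just derived; the second follows because $u\equiv 0$ on $B_R(0)\setminus\Omega_\Gamma$, so strong $H^1$ convergence annihilates $\int_{B_R(0)\setminus\Omega_\Gamma}(|\nabla u_n|^2+u_n^2)\,dx$, which combined with the vanishing of the $V$ integral gives $\|u_n\|^2_{\lambda_n,B_R(0)\setminus\Omega_\Gamma}\to 0$; and the third uses that $u$ is supported on $\Omega_j\subset\Omega'_j$ and $V\equiv 0$ on $\Omega_j$, so $\|u_n\|^2_{\lambda_n,\Omega'_j}\to\int_{\Omega_j}(|\nabla u|^2+u^2)\,dx$. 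I expect the main obstacle to be the step that rules out nontrivial weak limits on $\Omega_j$ for $j\notin\Gamma$: this is the \emph{raison d'être} of the penalization $\tilde F'_2$ and depends critically on $l<1$ together with the monotonicity of $F'_2(s)/s$ recorded at the start of Section 2.
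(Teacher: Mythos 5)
Your proposal is correct and follows essentially the same route as the paper's proof: boundedness via Corollary \ref{coro}, Fatou's lemma to localize the weak limit in $\overline{\Omega}$, the penalization bound $\tilde F'_2(s)s\le ls^2$ with $l<1$ to kill $u|_{\Omega_j}$ for $j\notin\Gamma$, and the comparison of $\Phi'_{\lambda_n,R}(u_n)u_n$ with $\Phi'_{\lambda_n,R}(u_n)u$ to upgrade to strong convergence. Your explicit verification that $u\ge 0$ (testing against $u_n^-$) is a detail the paper glosses over, but it does not change the argument.
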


\begin{proof}
By Corollary \ref{coro}, there exists $K>0$ such that
$$
\Vert u_{n}\Vert^{2}_{\lambda_{n}, R}\leq K, \quad \forall n\in \mathbb{N}.
$$
Thus $(u_{n})$ is bounded in $H^{1}_0(B_R(0))$ and we can assume that for some $u\in H^{1}_0(B_R(0))$
$$
u_{n}\rightharpoonup u\quad\text{in}\,\, H^{1}_0(B_R(0))
$$
and
$$
u_{n}(x)\rightarrow u(x)\quad\text{a.e. in}\,\, B_R(0).
$$

Fixing  $C_{m}=\{x\in B_R(0): V(x)\geq \frac{1}{m}\}$, we have
$$
\int_{C_{m}} |u_{n}|^2dx \leq \frac{m}{\lambda_{n}}\int_{B_R(0)} \lambda_{n} V(x)|u_{n}|^2dx,
$$
that is
$$
\int_{C_{m}} |u_{n}|^2dx \leq \frac{m}{\lambda_{n}}\|u_{n}\|_{\lambda_{n}, R}^2.
$$
The last inequality together with the Fatou's Lemma yields
$$
\int_{C_{m}} |u|^2dx=0, \quad \forall m\in \mathbb{N}.
$$
Then $u(x)=0$ on $\bigcup_{m=1}^{+\infty} C_{m}=B_R(0)\backslash \overline{\Omega}$, and so,
$u|_{\Omega_{j}}\in H_{0}^{1}(\Omega_{j})$, $j\in\{1, \cdots, k\}$. From this, we are able to prove $(i)-(iv)$\\
$(i)$. Since $u=0$ in $B_R(0)\backslash \overline{\Omega}$  and $\Phi'_{\lambda_{n},R}(u_n)u_n=\Phi'_{\lambda_{n},R}(u_n)u=o_n(1)$, we have
$$
\int_{B_R(0)}(\vert \nabla u_{n}-\nabla u\vert^{2}+(\lambda_{n}V(x)+1)\vert  u_{n}- u\vert^{2})dx\rightarrow 0
$$
which implies that $u_{n}\rightarrow u$ in $H^{1}_0(B_R(0))$.\\
$(ii)$ Since $u\in H^{1}_{0}(B_R(0))$ and $u=0$ in $B_R(0)\backslash \overline{\Omega}$, we deduce that $u\in H_{0}^{1}(\Omega)$, or equivalently
$u|_{\Omega_{j}}\in H_{0}^{1}(\Omega_{j})$, for $j=1, \cdots, k$. Moreover, $u_{n}\rightarrow u$ in $H^{1}_0(B_R(0))$ combined with
 $\Phi'_{\lambda_{n}, R}(u_{n})\varphi\rightarrow 0$ as $n\rightarrow +\infty$ for each $\varphi\in C_{0}^{\infty}(\Omega_{\Gamma})$ implies that
\begin{equation*}
\int_{\Omega_{\Gamma}}\big (\nabla u \nabla \varphi+u \varphi\big)dx+\int_{ \Omega_{\Gamma}} F'_{1}(u)\varphi dx-\int_{ \Omega_{\Gamma}}  F'_{2}(u^{+})\varphi dx=0,
\end{equation*}
from where it follows that $u|_{\Omega_{\Gamma}}$ is a  solution for $(P_{\infty, \Gamma})$.

On the other hand, for each $j\in\{1, 2, \cdots, k\}\backslash \Gamma$,
\begin{equation*}
\int_{\Omega_{j}}\big (\vert \nabla u \vert^{2}+u^{2}\big)dx+\int_{ \Omega_{j}} F'_{1}(u)u dx-\int_{ \Omega_{j}}  \tilde{F}'_2(u^{+}) u^{+}dx=0.
\end{equation*}
By the fact that $F'_{1}(s)s\geq 0$ and  $\tilde{F}'_2(s)s\leq ls^{2}$ for all $s\in \mathbb{R}^{+}$, we derive that
\begin{equation*}
\int_{\Omega_{j}}\big (\vert \nabla u\vert^{2}+u^{2}\big)dx\leq \int_{ \Omega_{j}}  \tilde{F}'_2(u^{+}) u^{+} dx\leq l \int_{\Omega_{j}} u^{2}dx.
\end{equation*}
Since $l<1$, $u=0$ in $\Omega_{j}$ for $j\in\{1, 2, \cdots, k\}\backslash \Gamma$ and $u\geq 0$ in $B_R(0)$, which shows (ii).\\
To prove (iii), note that, from (i)
$$
\int_{B_R(0)} \lambda_{n} V(x)|u_{n}|^2dx=\int_{B_R(0)} \lambda_{n} V(x)|u_{n}-u|^2dx\leq C\Vert u_{n}-u\Vert^{2}_{\lambda_{n}, R},
$$
so
$$
\int_{B_R(0)} \lambda_{n} V(x)|u_{n}|^2dx\rightarrow 0\quad \text{as}\,\,n\rightarrow +\infty.
$$
Moreover, from (i) and (ii), it is easy to check that
$$\aligned
&\Vert u_{n}\Vert^{2}_{\lambda_{n},B_R(0)\backslash \Omega_{\Gamma}}\rightarrow 0,\\
&\Vert u_{n}\Vert^{2}_{\lambda_{n}, \Omega'_{j}}\rightarrow \int_{\Omega_{j}}\big (|\nabla u|^2+|u|^2\big)dx,\quad\text{for all}\,\, j\in \Gamma.
\endaligned \eqno
$$

\end{proof}

With little modifications in the arguments explored in the proof of Proposition \ref{prop1} and using Corollary \ref{coro}, we also have the result below that will be used in Section 3. \\

\begin{proposition}\label{prop1*}
	Let $u_{n} \in E_{\lambda_n,R_n}$ be a $(PS)_{\infty, R_n}$ sequence with $R_n \to +\infty$, that is,
	$$\aligned
	& u_{n}\in E_{\lambda_n,R_n}\,\, \text{and}\,\, \lambda_{n}\rightarrow\infty,\\
	&\Phi_{\lambda_{n},R_n}(u_n)\rightarrow c,\\
	&\Vert \Phi'_{\lambda_{n},R_n}(u_n)\Vert \rightarrow 0.
	\endaligned \eqno$$
	Then, for some subsequence, still denoted by $(u_{n})$,
	there exists $u\in H^{1}_0(\Omega)$ such that
	$$
	u_{n}\rightharpoonup u\quad\text{in}\,\, H^{1}(\mathbb{R}^N).
	$$
	Moreover,\\
	\noindent (i)
	$$
	\|u_n-u\|_{\lambda_n,R_n} \to 0,
	$$
	and so,
	$$
	u_{n}\rightarrow u\quad \text{in}\,\, H^{1}(\mathbb{R}^N).
	$$
	\noindent (ii)  $u\equiv0$ in $\mathbb{R}^N\backslash \Omega_{\Gamma}$ and $u$ is a solution of
	$$
	\left\{
	\begin{array}{lc}
	-\Delta u=u \log u^2, & \mbox{in} \quad \Omega_{\Gamma}, \\
	u=0,  & \mbox{on} \quad \partial \Omega_{\Gamma}.  \\
	\end{array}
	\right.
	\eqno{(P_{\infty, \Gamma})}
	$$

	\noindent (iii) $u_{n}$ also satisfies
	$$\aligned
	& \lambda_{n}\int_{B_{R_n}(0)} V(x)|u_{n}|^2dx \rightarrow 0,\\
	&\Vert u_{n}\Vert^{2}_{\lambda_{n},B_{R_n}(0)\backslash \Omega_{\Gamma}}\rightarrow 0,\\
	&\Vert u_{n}\Vert^{2}_{\lambda_{n}, \Omega'_{j}}\rightarrow \int_{\Omega_{j}}\big (|\nabla u|^2+|u|^2\big)dx,\quad\text{for all}\,\, j\in \Gamma.
	\endaligned \eqno$$

\end{proposition}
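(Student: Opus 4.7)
The plan is to mirror the proof of Proposition \ref{prop1}, regarding each $u_n$ as an element of $H^1(\mathbb{R}^N)$ by extending it by zero outside $B_{R_n}(0)$. The new twist is that the domain itself now expands with $n$, so the parts of the earlier argument that used Rellich--Kondrachov on a fixed ball have to be handled differently. By Corollary \ref{coro} the sequence is uniformly bounded, $\|u_n\|_{\lambda_n, R_n} \leq K$, so after extraction $u_n \rightharpoonup u$ in $H^1(\mathbb{R}^N)$ with $u_n \to u$ a.e.\ and in $L^q_{\mathrm{loc}}(\mathbb{R}^N)$ for every $q \in [1, 2^*)$.

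I would then repeat the Fatou argument verbatim: with $C_m = \{x \in \mathbb{R}^N : V(x) \geq 1/m\}$, the inequality
$$
\int_{C_m} |u_n|^2\,dx \leq \frac{m}{\lambda_n}\|u_n\|^2_{\lambda_n, R_n} \to 0
$$
forces $u \equiv 0$ on $\bigcup_m C_m = \mathbb{R}^N \setminus \overline{\Omega}$, so $u \in H^1_0(\Omega)$ has compact support in $\overline{\Omega}$.

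The heart of the matter, and the main new obstacle compared with Proposition \ref{prop1}, is upgrading to strong convergence in $\|\cdot\|_{\lambda_n, R_n}$ on an expanding domain. Here I would exploit the penalization: outside $\Omega'_\Gamma$ the truncated nonlinearity satisfies $\tilde{F}'_2(s)s \leq l s^2$ with $l < 1$. Picking a smooth cutoff $\varphi$ with $\varphi \equiv 0$ on $\overline{\Omega}_\Gamma$ and $\varphi \equiv 1$ on $\mathbb{R}^N \setminus \Omega'_\Gamma$ (the transition happens in the bounded open set $\Omega'_\Gamma \setminus \overline{\Omega}_\Gamma$, where $u = 0$), the test function $\varphi u_n$ still belongs to $H^1_0(B_{R_n}(0))$, and testing $\Phi'_{\lambda_n, R_n}(u_n)(\varphi u_n) = o_n(1)$ gives
$$
(1-l)\int \varphi u_n^2\,dx + \int \varphi\bigl(|\nabla u_n|^2 + \lambda_n V u_n^2 + F'_1(u_n)u_n\bigr)dx \leq o_n(1),
$$
after absorbing the cross term $\int u_n \nabla u_n\cdot\nabla\varphi = -\tfrac{1}{2}\int u_n^2 \Delta\varphi$ using $u \equiv 0$ on the bounded set $\mathrm{supp}(\Delta\varphi)$ together with local $L^2$-convergence. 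Since every summand on the left is non-negative, they each tend to $0$; in particular $u_n \to 0$ in $\|\cdot\|_{\lambda_n, R_n}$ on $\mathbb{R}^N \setminus \Omega'_\Gamma$. On the bounded region $\Omega'_\Gamma$, compact Sobolev embedding lets me pass to the limit in the nonlinear terms just as in Proposition \ref{prop1}. Combining the two contributions with $\Phi'_{\lambda_n, R_n}(u_n)(u_n - u) = o_n(1)$ — legitimate because $\mathrm{supp}(u) \subset \overline{\Omega}_\Gamma \subset B_{R_n}(0)$ for $n$ large — yields $\|u_n - u\|_{\lambda_n, R_n} \to 0$, which is item (i).

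Items (ii) and (iii) then follow along the same lines as in Proposition \ref{prop1}. Passing to the limit in $\Phi'_{\lambda_n, R_n}(u_n)\psi = o_n(1)$ for $\psi \in C_0^\infty(\Omega_\Gamma)$ shows $u|_{\Omega_\Gamma}$ weakly solves $(P_{\infty,\Gamma})$; for $j \notin \Gamma$, testing the limit equation on $\Omega_j$ with $u|_{\Omega_j}$ itself and using $\tilde{F}'_2(s)s \leq l s^2$ with $l < 1$ together with $F'_1(s)s \geq 0$ forces $u \equiv 0$ on $\Omega_j$; and the three norm convergences in (iii) are immediate consequences of the strong convergence just established together with $u = 0$ on $\mathbb{R}^N \setminus \Omega_\Gamma$.
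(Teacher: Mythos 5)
Your argument is correct and rests on the same two pillars as the paper's proof: the uniform bound $\|u_n\|_{\lambda_n,R_n}\le K$ from Corollary \ref{coro} together with the Fatou argument locating the support of $u$, and a cutoff test function combined with the penalization inequality $\tilde{F}'_2(s)s\le l s^2$, $l<1$, to kill the mass of $u_n$ away from $\Omega_\Gamma$. The genuine difference is where the cutoff is placed. The paper tests with $\phi_R u_n$, where $\phi_R$ vanishes on $B_{R/2}(0)$ and equals $1$ outside $B_R(0)$, and bounds the cross term by $\frac{C}{R}\|u_n\|^2_{\lambda_n,R_n}$ via H\"older; this yields only the uniform tail estimate $\limsup_n\int_{\mathbb{R}^N\setminus B_R(0)}(\,\cdots)\le C/R$, which is then combined with local compactness to pass to the limit in the nonlinear terms over all of $\mathbb{R}^N$, identify the limit equation for $u$, and finally deduce $\|u_n-u\|_{\lambda_n,R_n}\to0$ by comparing $\Phi'_{\lambda_n,R_n}(u_n)u_n=o_n(1)$ with the limit identity. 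You instead anchor the cutoff at $\partial\Omega'_\Gamma$, so that $\nabla\varphi$ lives in the \emph{fixed} bounded annulus $\Omega'_\Gamma\setminus\overline{\Omega}_\Gamma$, where $u_n\to u=0$ strongly in $L^2$; the cross term is then genuinely $o_n(1)$ rather than $O(1/R)$, and you obtain the sharper conclusion $\|u_n\|^2_{\lambda_n,\mathbb{R}^N\setminus\Omega'_\Gamma}\to0$ outright, after which only Rellich on the fixed set $\Omega'_\Gamma$ is needed and item (iii) falls out immediately. Both routes work; yours is slightly more economical. One point to make explicit: in your displayed inequality, on the transition region where $0<\varphi<1$ one has $G'_2(x,\cdot)=F'_2$, not $\tilde{F}'_2$, so the bound $G'_2(x,s)s\le l s^2$ is not available there; since that region is bounded and $u_n\to0$ in $L^p$ on it, the corresponding contribution is $o_n(1)$ by \eqref{eq5}, exactly as for the cross term --- a presentational omission, not a gap.
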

\begin{proof}
First of all, the boundedness of $(\Phi_{\lambda_{n},R_n}(u_n))$ implies that there exists $K>0$ such that
$$
\Vert u_{n}\Vert^{2}_{\lambda_{n}, R_{n}}\leq K, \quad \forall n\in \mathbb{N}.
$$
Thus $(u_{n})$ is bounded in $H^{1}(\mathbb{R}^N)$ and we can assume that for some $u\in H^{1}(\mathbb{R}^N)$
$$
u_{n}\rightharpoonup u\quad\text{in}\,\, H^{1}(\mathbb{R}^N),
$$
and
$$
u_{n}(x)\rightarrow u(x)\quad\text{a.e. in}\,\, \mathbb{R}^N,
$$
and $u(x)=0$ on $\mathbb{R}^N\backslash \overline{\Omega}$.\\
$(i)$ Let $0<R<R_n$ and  $\phi_{R}\in C^{\infty}(\mathbb{R}^{N}, \mathbb{R})$ be a cut-off function
	such that
	\begin{equation*}
	\phi_{R}=0\quad x\in B_{R/2}(0), \quad \phi_{R}=1\quad x\in B^{c}_{R}(0),\quad 0\leq \phi_{R}\leq 1,\quad\text{and}\, \quad\vert \nabla\phi_{R}\vert\leq C/R,
	\end{equation*}
	where $C>0$ is a constant independent of $R$. Since the sequence $(\|\phi_{R}u_{n}\|_{\lambda_{n}, R_n})$ is bounded,  we derive that
	\begin{equation*}
	\Phi_{\lambda_n, R_n}'( u_{n})(\phi_{R} u_{n})=o_{n}(1),
	\end{equation*}
	that is
	\begin{eqnarray*}
		\int\Big(\vert \nabla u_{n} \vert^{2}+(\lambda_{n} V(x)+1)\vert u_{n} \vert^{2}\Big)\phi_{R}dx&=&\int_{\Omega'_{\Gamma}} F'_{2}(u_{n}^{+})\phi_{R}u_{n}dx+\int_{\mathbb{R}^N\backslash \Omega'_{\Gamma}} \widetilde{F}'_{2}(u_{n}^{+})\phi_{R}u_{n}dx\\
		&&-\int  u_{n} \nabla u_{n}\nabla \phi_{R}dx -\int F'_{1}(u_{n})\phi_{R}u_{n}dx+o_{n}(1).
	\end{eqnarray*}
	Choosing $R>0$ such that $\Omega'_{\Gamma} \subset B_{R/2}(0)$, the H\"{o}lder inequality together with the boundedness of the sequence $(\|u_{n}\|_{\lambda_{n}, R_n})$ in $\mathbb{R}$ leads to
	\begin{eqnarray*}
		\int\Big(\vert \nabla u_{n}\vert^{2}+(\lambda_{n} V(x)+1)\vert u_{n} \vert^{2}\Big)\phi_{R}dx&\leq&l\int \vert u_{n}\vert^{2}\phi_{R}dx+\frac{C}{R}\Vert u_{n}\Vert_{\lambda_{n}, R_n}^{2}+o_{n}(1).
	\end{eqnarray*}
	So, fixing $\zeta>0$ and passing to the limit in the last inequality, it follows that
\begin{align}\label{uniformly}
	\underset{n\rightarrow\infty}{\lim\sup}\int_{\mathbb{R}^{N}\backslash B_{R}(0)}(\vert \nabla u_{n}\vert^{2}+(\lambda_{n} V(x)+1)\vert u_{n}\vert^{2})dx\leq \frac{C}{R}< \zeta,
	\end{align}
	for some $R$ sufficiently large.\\
Since $G'_2$ has a subcritical growth, the above estimate \eqref{uniformly} ensures that
	$$
	\int G'_2(x, u_{n}^{+})w\,dx \to \int G'_2(x, u^{+})w\,dx, \quad \forall w \in C_{0}^{\infty}(\mathbb{R}^N),
	$$
	$$
	\int G'_2(x, u_{n}^{+})u_{n}^{+}\,dx \to \int G'_2(x, u^{+})u^{+}\,dx,
	$$
	and
	$$
	\int G_2(x,u_{n}^{+})\,dx \to \int G_2(x, u^{+})\,dx.
	$$
	Now, recalling that $\displaystyle \lim_{n\rightarrow\infty}\Phi_{\lambda_n, R_n}'(u_{n})w=0$ for all $w \in C_{0}^{\infty}(\mathbb{R}^N)$ and $\Vert u_{n}\Vert^{2}_{\lambda_{n}, R_{n}}\leq K$, $\forall n\in \mathbb{N}$, we deduce that
$$
\int \big (\nabla u \nabla \omega +u\,\omega\big)dx+\int F'_1(u)\omega\,dx=\int G'_2( x, u^{+})\omega\,dx,
$$
and so,
	$$
	\int \big (|\nabla u|^2+|u|^2\big)dx+\int F'_1(u)u\,dx=\int G'_2( x, u^{+})u^{+}\,dx.
	$$
This together with the equality $\displaystyle \lim_{n\rightarrow\infty}\Phi_{\lambda_{n}, R_n}'(u_{n})u_{n}=0$, that is,
	$$
	\displaystyle \int  \big (|\nabla u_{n}|^2+(\lambda_{n} V(x) +1)|u_{n}|^2\big)dx+\int F'_1(u_{n})u_{n}\,dx= \displaystyle \int  G'_2(x, u_{n}^{+})u_{n}^{+} dx+o_{n}(1),
	$$
leads to
	$$
	\begin{array}{l}
\displaystyle \lim_{n\to +\infty}	\displaystyle \Big(\int \big (|\nabla u_{n}|^2+(\lambda_{n} V(x) +1)|u_{n}|^2\big)dx+\int F'_1(u_{n})u_{n}\,dx\Big)= \\
\displaystyle \int \big (|\nabla u|^2+|u|^2\big)dx+\int F'_1(u)u\,dx,
\end{array}
	$$
	from where it follows that, for some subsequence,
	$$
	u_{n} \to u \quad \mbox{in} \quad H^{1}(\mathbb{R}^N),
	$$
$$
\lambda_n\int V(x)|u_n|^{2}\,dx \to 0,
$$
and
$$
	F'_1(u_{n})u_{n} \to F'_1(u)u \quad \mbox{in} \quad L^{1}({\mathbb{R}}^N).
$$
	Since $F_1$ is convex, even and $F(0)=0$, we know that $F'_1(t)t \geq F_1(t)\geq 0$ for all $t \in {\mathbb{R}}$. Thus, the last limit together with the Lebesgue Dominated Convergence Theorem yields
	$$
	F_1(u_{n}) \to F_1(u) \quad \mbox{in} \quad L^{1}({\mathbb{R}}^N).
	$$
Since
$$
\|u_n-u\|^{2}_{\lambda_n,R_n}=\int |\nabla u_n - \nabla u|^{2}\,dx+\int | u_n - u|^{2}\,dx+\lambda_n\int V(x)|u_n|^{2}\,dx,
$$
we also have that
$$
\|u_n-u\|^{2}_{\lambda_n,R_n} \to 0,
$$
finishing the proof of $(i)$. \\
The proofs of $(ii)$ and $(iii)$ are similar to that of Proposition \ref{prop1}, so we omit it.
\end{proof}

\subsection{The $L^{\infty}$-boundedness of  the $(M_{\lambda, R})$ solutions}\label{tpm}
In this subsection, we study the boundedness outside $\Omega'_{\Gamma}$ for some solutions of $(M_{\lambda, R})$.

\begin{lemma}\label{le511}
Let $(u_{\lambda, R})$ be a family of positive solutions of $(M_{\lambda, R})$ such that  $(\Phi_{\lambda,R}(u_{\lambda, R}))$ is  bounded in $\mathbb{R}$
for any $\lambda> 0$ and $R>0$ large. Then, there exist $K>0$ that does not depend on $\lambda> 0$ and $R>0$, and $R^*>0$ such that
$$
\vert u_{\lambda,R}\vert_{\infty, R}\leq K, \quad \forall \lambda>0 \quad \mbox{and} \quad R \geq R^*.
$$
\end{lemma}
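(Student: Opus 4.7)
The plan is to combine the uniform $E_{\lambda,R}$-estimate coming from Corollary \ref{coro0} with a Moser iteration that exploits the subcritical structure of the penalized nonlinearity $G_2'$, after discarding the logarithmic term $F_1'$ by a sign argument. Since every $u_{\lambda,R}$ is a nontrivial critical point of $\Phi_{\lambda,R}$ at a uniformly bounded energy level, Corollary \ref{coro0} yields a constant $M>0$, independent of $\lambda$ and $R$, with
$$
\Vert u_{\lambda,R}\Vert_{\lambda, R} \leq M, \qquad \forall\, \lambda>0,\ R>0 \text{ large.}
$$
As $V \geq 0$, this controls $\Vert u_{\lambda,R}\Vert_{H^{1}_{0}(B_R(0))}$ uniformly, and the Sobolev embedding gives a uniform bound on $u_{\lambda,R}$ in $L^q(B_R(0))$ for every $q \in [2, 2^*]$ (and every $q \in [2,+\infty)$ when $N \leq 2$), with constants that do not depend on $\lambda$ or $R$.

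\textbf{Pointwise inequality.} Choosing $\delta>0$ sufficiently small at the outset (for instance $\delta \leq e^{-3/2}$), a direct inspection of the piecewise definition of $F_1$ shows $F_1'(s) \geq 0$ for every $s \geq 0$. Combined with $\lambda V \geq 0$ and the global estimate
$$
|G_2'(x,s)| \leq l\, s + C\, s^{p-1}, \qquad s \geq 0,
$$
for some fixed $p \in (2,2^*)$ (a consequence of the definition of $G_2'$ together with \eqref{eq5}), the equation $(M_{\lambda,R})$ yields the weak pointwise inequality
$$
-\Delta u_{\lambda,R} + u_{\lambda,R} \leq l\, u_{\lambda,R} + C\, u_{\lambda,R}^{p-1} \quad \text{in } B_R(0),
$$
where the nonnegative terms $F_1'(u_{\lambda,R})$ and $\lambda V(x) u_{\lambda,R}$ have been dropped with the correct sign.

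\textbf{Moser iteration and conclusion.} For $\beta \geq 0$ and $L>0$, set $u_L = \min\{u_{\lambda,R}, L\}$ and use $\varphi = u_{\lambda,R}\, u_L^{2\beta} \in H^1_0(B_R(0))$ as a test function in the preceding inequality. Dropping the nonnegative mixed gradient term and applying the Sobolev inequality produces an estimate of the form
$$
\Bigl(\int_{B_R(0)} \bigl(u_{\lambda,R}\, u_L^{\beta}\bigr)^{2^*}\, dx\Bigr)^{2/2^*} \leq C\,(\beta+1)^{2}\int_{B_R(0)} u_{\lambda,R}^{p}\, u_L^{2\beta}\, dx,
$$
with $C$ depending only on $N$, $l$ and $p$. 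Letting $L \to +\infty$ via Fatou and iterating with a geometric sequence of integrability exponents $q_k \to +\infty$ (with ratio tending to $2^*/2 = N/(N-2) > 1$), starting from $q_0 = 2^*$ provided by Step 1, the standard Moser summability argument gives $\sup_k \Vert u_{\lambda,R}\Vert_{L^{q_k}(B_R(0))} < +\infty$, and passing $k \to +\infty$ yields a uniform bound
$$
\Vert u_{\lambda,R}\Vert_{L^{\infty}(B_R(0))} \leq K,
$$
with $K$ depending only on $M$, $N$, $l$ and $p$, hence independent of $\lambda$ and $R$. Choosing $R^*>0$ so that $\Omega_\Gamma' \subset B_{R^*}(0)$ completes the argument. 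The main obstacle is precisely the logarithmic term $F_1'$, which fails to be subcritical near the origin (indeed $F_1'(s)/s \to +\infty$ as $s \to 0^+$); it is only its pointwise nonnegativity on $[0,+\infty)$, valid for small $\delta$, that allows us to eliminate it cleanly and reduce to a standard subcritical Moser scheme whose constants are manifestly uniform in $\lambda$ and $R$.
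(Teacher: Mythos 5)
Your proof is correct and follows essentially the same route as the paper's: a Moser iteration starting from the uniform energy/norm bound, discarding the logarithmic term $F_1'$ by its sign and exploiting the subcritical bound $G_2'(x,s)\leq F_2'(s)\leq Cs^{p-1}$. The only notable difference is that you obtain the uniform $L^{2^*}$ starting bound directly from the uniform $\Vert\cdot\Vert_{\lambda,R}$ estimate, whereas the paper routes this through Proposition \ref{prop1*} and a final contradiction argument; your version is slightly more direct and equally valid.
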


\begin{proof}
For each  $\lambda> 0$, $L>0$ and $\beta>1$, let
\begin{equation*}
u_{L, \lambda}:=\left\{
\begin{array}{l}
u_{\lambda,R},\quad \quad \quad \text{if}\,\,u_{\lambda,R}\leq L\\
L ,\quad\quad\quad\,\, \,\,\text{if}\,\,u_{\lambda,R}\geq L.
\end{array}%
\right.
\end{equation*}
$$
z_{L, \lambda}=u_{L, \lambda}^{2(\beta-1)}u_{\lambda,R}\quad\text{and}\quad  \omega_{L, \lambda}=u_{\lambda,R}u_{L, \lambda}^{\beta-1}.
$$
Using the fact that $(u_{\lambda, R})$ is a  positive solution to $(M_{\lambda, R})$ and taking
$z_{L, \lambda}$ as a test function, we have
\begin{eqnarray}\label{ine31}
&&\int_{B_R(0)} u_{L, \lambda}^{2(\beta-1)}\vert \nabla u_{\lambda,R}\vert^{2}dx+2(\beta-1)\int_{B_R(0)} u_{L, \lambda}^{2\beta-3}u_{\lambda,R} \nabla u_{\lambda,R}\nabla u_{L, \lambda} dx\\
&&+\int_{B_R(0)}(\lambda V(x)+1)u_{L, \lambda}^{2(\beta-1)}\vert  u_{\lambda,R}\vert^{2}dx\nonumber+\int_{B_R(0)} F'_{1}(u_{\lambda,R})u_{L, \lambda}^{2(\beta-1)}u_{\lambda,R}dx=\int_{B_R(0)} G'_{2}(x, u_{\lambda,R})u_{L, \lambda}^{2(\beta-1)} u_{\lambda,R}dx.
\end{eqnarray}
From the definition of $G_{2}$,
\begin{eqnarray}\label{ine32}
G'_{2}( x, t)\leq F'_{2}(t)\leq Ct^{p-1}, \quad  \forall \, (x, t)\in \mathbb{R}^N\times \mathbb{R}^{+},
\end{eqnarray}
where $p\in (2, 2^{*})$.
Hence, from \eqref{ine31} and \eqref{ine32},
\begin{eqnarray}\label{ine33}
\int_{B_R(0)}( \vert \nabla \omega_{L, \lambda}\vert^{2}+\vert  \omega_{L, \lambda}\vert^{2} )dx\leq C\int_{B_R(0)} u_{\lambda,R}^{p}u_{L, \lambda}^{2(\beta-1)} dx= C\int_{B_R(0)} u_{\lambda,R}^{p-2}\omega_{L, \lambda}^{2} dx.
\end{eqnarray}
Using the H\"{o}lder inequality,
\begin{eqnarray}\label{ine73}
\int_{B_R(0)} u_{\lambda, R}^{p-2}\omega_{L, \lambda}^{2} dx&\leq& C\beta^{2}\Big(\int_{B_R(0)} u_{\lambda, R}^{p}dx\Big)^{(p-2)/p}\Big(\int_{B_R(0)} \omega_{L, \lambda} ^{p}dx\Big)^{2/p}.
\end{eqnarray}
On the other hand, by the Sobolev inequality,
\begin{eqnarray}\label{ine35}
\Big(\int_{B_R(0)}\vert \omega_{L, \lambda} \vert^{2^{*}} dx\Big)^{2/2^{*}} \leq C\int_{B_R(0)}( \vert \nabla \omega_{L, \lambda}\vert^{2}+\vert  \omega_{L, \lambda}\vert^{2} )dx.
\end{eqnarray}
Combining \eqref{ine33}, \eqref{ine73} and \eqref{ine35},
\begin{eqnarray*}\label{ine36}
\Big(\int_{B_R(0)}\vert \omega_{L, \lambda} \vert^{2^{*}} dx\Big)^{2/2^{*}}
\leq C\beta^{2}\Big(\int_{B_R(0)} u_{\lambda} ^{p\beta}dx\Big)^{2/p}.
\end{eqnarray*}
Using the Fatou's lemma in the variable $L$, one has
\begin{eqnarray*}\label{ine36}
\Big(\int_{B_R(0)}\vert u_{\lambda} \vert^{2^{*}\beta} dx\Big)^{2/2^{*}}
\leq C\beta^{2}\Big(\int_{B_R(0)} u_{\lambda} ^{p\beta}dx\Big)^{2/p},
\end{eqnarray*}
and so,
\begin{eqnarray}\label{ine366}
\Big(\int_{B_R(0)}\vert u_{\lambda} \vert^{2^{*}\beta} dx\Big)^{1/2^{*}\beta}
\leq C^{1/\beta}\beta^{^{1/\beta}}\Big(\int_{B_R(0)} u_{\lambda} ^{p\beta}dx\Big)^{1/p\beta}.
\end{eqnarray}
Since $(\Phi_{\lambda,R}(u_{\lambda, R}))$ is  bounded in $\mathbb{R}$
for any $\lambda> 0$ and $R>0$ large and $(u_{\lambda, R})$ is a solution of $(M_{\lambda, R})$, arguing as in Subsection 2.1 there exists $C>0$ such that
$$
\|u_{\lambda,R}\|_{\lambda,R} \leq C
$$
$\lambda> 0$ and $R>0$ large. Fixing any sequences $\lambda_n \to +\infty$ and $R_n \to +\infty$, we may see that $(u_{\lambda_n, R_n})$ satisfies the hypotheses from Proposition \ref{prop1*}, then $u_{\lambda_n,R_n}\rightarrow u$ strongly in $H^{1}(\mathbb{R}^N)$. Now, since $2<p<2^{*}$ and $(|u_{\lambda_n,R_n}|_{L^{2^*}(\mathbb{R}^N)})$ is bounded in $\mathbb{R}$, a well known iteration argument (see in \cite[Lemma 3.10]{AlvesChao2}) and \eqref{ine366} implies that there exists a positive constant $K_1>0$ such that
$$
\vert u_{\lambda_n,R_n}\vert_{L^{\infty}(\mathbb{R}^N)}\leq K_1,\quad \forall n \in \mathbb{N}.
$$
From the above analysis, it is easy to see that the lemma follows arguing by contradiction.
\end{proof}
\begin{lemma}\label{le51}
Let $(u_{\lambda, R})$ be a family of positive solutions of $(M_{\lambda, R})$ with $\Phi_{\lambda,R}(u_{\lambda, R})$  bounded in $\mathbb{R}$
for any $\lambda> 0$ and $R>0$ large. Then, there exist $\lambda^{\prime}>0$ and $R^{\prime}>0$ such that
$$
\vert u_{\lambda, R}\vert_{\infty, B_R(0)\backslash \Omega'_{\Gamma}}\leq a_{0}, \quad \forall \lambda\geq \lambda^{\prime},\,\, R\geq R^{\prime}.
$$
In particular, $u_{\lambda, R}$ solves the original problem $(P_{\lambda, R})$ for any $\lambda\geq \lambda^{\prime}$ and  $R\geq R^{\prime}$.
\end{lemma}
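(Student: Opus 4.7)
My plan is to argue by contradiction using Proposition \ref{prop1*} and a localized Moser iteration. Suppose the conclusion fails: then there exist sequences $\lambda_n\to+\infty$, $R_n\to+\infty$, and points $x_n\in B_{R_n}(0)\setminus\Omega'_{\Gamma}$ such that $u_n:=u_{\lambda_n,R_n}$ satisfies $u_n(x_n)>a_0$ for every $n$. Since by hypothesis $(\Phi_{\lambda_n,R_n}(u_n))$ is bounded and each $u_n$ is a critical point, after passing to a subsequence $(u_n)$ is a $(PS)_{\infty,R_n}$ sequence. Proposition \ref{prop1*} then delivers $u\in H^1(\mathbb{R}^N)$ with $u\equiv 0$ on $\mathbb{R}^N\setminus\Omega_{\Gamma}$ and $u_n\to u$ strongly in $H^1(\mathbb{R}^N)$; in particular $|u_n|_{L^q(\mathbb{R}^N\setminus\Omega_{\Gamma})}\to 0$ for every $q\in[2,2^*]$, and interpolating with the uniform $L^\infty$ bound of Lemma \ref{le511} extends this decay to every $q\in[2,+\infty)$.

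Next I localize the Moser iteration from Lemma \ref{le511}. Fix (independently of $n$) a smooth cutoff $\eta$ with $0\leq\eta\leq 1$, $\eta\equiv 1$ on $\mathbb{R}^N\setminus\Omega'_{\Gamma}$, $\eta\equiv 0$ on $\overline{\Omega_{\Gamma}}$, and $|\nabla\eta|$ bounded; such an $\eta$ exists because $\overline{\Omega_{\Gamma}}\subset\Omega'_{\Gamma}$. For $L>0$ and $\beta>1$ I test $(M_{\lambda_n,R_n})$ with $\eta^2 u_{L,n}^{2(\beta-1)}u_n$, where $u_{L,n}:=\min\{u_n,L\}$. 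Since $\lambda_n V(x)u_n^2\geq 0$ and $F'_1(u_n)u_n\geq 0$, both can be discarded from the left-hand side. On the right, where the support of $\eta$ meets $\mathbb{R}^N\setminus\Omega'_{\Gamma}$ one has $G'_2(x,u_n)=\tilde F'_2(u_n)\leq lu_n$, which, since $l<1$, is absorbed back into the left; where the support of $\eta$ meets $\Omega'_{\Gamma}\setminus\overline{\Omega_{\Gamma}}$ one uses $G'_2=F'_2$ with the subcritical bound $|F'_2(s)|\leq C|s|^{p-1}$, $p\in(2,2^*)$. Passing $L\to\infty$ by Fatou and using Sobolev and H\"older exactly as in Lemma \ref{le511} then yields an estimate of the form
$$
\Big(\int|\eta u_n|^{2^{*}\beta}\,dx\Big)^{1/(2^{*}\beta)}\leq (C\beta^2)^{1/\beta}\Big(\int_{\mathbb{R}^N\setminus\overline{\Omega_{\Gamma}}} u_n^{p\beta}\,dx\Big)^{1/(p\beta)},
$$
with $C$ independent of $n$, $\beta$, $\lambda_n$ and $R_n$.

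Starting the iteration at $\beta_0=2^{*}/p>1$---where $|u_n|_{L^{p\beta_0}(\mathbb{R}^N\setminus\overline{\Omega_{\Gamma}})}\to 0$ by the first paragraph---and running the classical Moser scheme propagates the decay to $|\eta u_n|_{L^\infty(\mathbb{R}^N)}\to 0$. Since $\eta\equiv 1$ on $\mathbb{R}^N\setminus\Omega'_{\Gamma}$, this forces $|u_n|_{L^\infty(B_{R_n}(0)\setminus\Omega'_{\Gamma})}\to 0$, contradicting $u_n(x_n)>a_0$. Once the bound $u_{\lambda,R}\leq a_0$ on $B_R(0)\setminus\Omega'_{\Gamma}$ is in hand, $G'_2(x,u_{\lambda,R})=F'_2(u_{\lambda,R})$ holds on all of $B_R(0)$, and the identity \eqref{efes} converts the modified equation into $(P_{\lambda,R})$, completing the lemma.

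The main obstacle is keeping the Moser iteration constants truly independent of $\lambda_n$ and $R_n$. This hinges on two points: nonnegativity of $\lambda_n V u_n^2$ and $F'_1(u_n)u_n$ lets me discard both terms from the test identity, so the $\lambda_n$-dependence disappears; and the cutoff $\eta$---fixed once for all in terms of the geometry of $\Omega_{\Gamma}$ and $\Omega'_{\Gamma}$---both localizes away from the region where the logarithmic part of $G'_2$ lives and has $R_n$-independent gradient bounds. Strong $H^1$-convergence from Proposition \ref{prop1*} is essential because it is what supplies the vanishing driver $|u_n|_{L^{p\beta_0}(\mathbb{R}^N\setminus\overline{\Omega_{\Gamma}})}\to 0$ needed to launch the iteration and force the final contradiction.
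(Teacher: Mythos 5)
Your overall strategy (contradiction along sequences $\lambda_n,R_n\to+\infty$, strong convergence from Proposition \ref{prop1*}, then elliptic estimates on the exterior region) is reasonable, but the key analytic step has a gap as written. Your single fixed cutoff $\eta$ produces an inequality whose left-hand side controls $\eta u_n$ while the right-hand side requires $u_n$ on all of $\operatorname{supp}\eta$, including the transition region $\Omega'_{\Gamma}\setminus\overline{\Omega_{\Gamma}}$ where $\eta<1$. Consequently the output of one step of the iteration does not dominate the input of the next, and ``running the classical Moser scheme'' does not close: one needs a nested family of cutoffs $\eta_k$ with shrinking supports (and gradient bounds growing like a controlled geometric factor), or equivalently a uniform local subsolution estimate $\sup_{B_{1/2}(x)}u_n\leq C\|u_n\|_{L^2(B_1(x))}$ applied over a covering of $\mathbb{R}^N\setminus\Omega'_{\Gamma}$. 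The latter is available here (the term $\lambda_n V u_n^2\geq 0$ can be discarded and the remaining growth is subcritical with the uniform $L^\infty$ bound of Lemma \ref{le511}), but this uniformity over an unbounded region and over $n$ is exactly the point that must be argued and is currently asserted rather than proved. There are also cross terms $\int u_n^2u_{L,n}^{2(\beta-1)}|\nabla\eta|^2$ coming from the cutoff that you do not account for.

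It is worth noting that the paper avoids this entirely in the second half of its argument. It uses a Moser-type estimate only on a fixed compact neighborhood $\mathcal{B}$ of $\partial\Omega'_{\Gamma}$ to get $|u_{\lambda,R}|_{L^\infty(\partial\Omega'_{\Gamma})}\leq C|u_{\lambda,R}|_{L^{2^*}(\mathcal{B})}\to 0$, and then tests the equation on $B_R(0)\setminus\Omega'_{\Gamma}$ with $\tilde u=(u_{\lambda,R}-a_0)^+$, which lies in $H^1_0(B_R(0)\setminus\Omega'_{\Gamma})$ precisely because of the boundary estimate. The identity
$$
\int_{B_R(0)\setminus\Omega'_{\Gamma}}|\nabla\tilde u|^2\,dx+\int\Big((\lambda V+1)-\tfrac{\tilde F'_2(u_{\lambda,R})}{u_{\lambda,R}}\Big)(\tilde u+a_0)\tilde u\,dx=0
$$
together with $\tilde F'_2(s)/s\leq l<1$ forces $\tilde u\equiv 0$. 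This is where the penalization structure of $\tilde F'_2$ earns its keep; your argument never uses $l<1$ in the exterior region and instead pays for it with the harder global iteration. I would either adopt the paper's comparison step or supply the nested-cutoff/covering details to repair the iteration.
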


\begin{proof}
Choose $R_0>0$ large such that $ \overline{\Omega'_{\Gamma}} \subset B_{R_0}(0)$ and fix  a neighborhood $\mathcal{B}$ of $\partial \Omega'_{\Gamma}$ such that
$$
\mathcal{B}\subset B_{R_0}(0)\backslash \Omega_{\Gamma}.
$$
The Moser's iteration technique implies that there exists $C>0$, which is independent of $\lambda$, such that
$$
\vert u_{\lambda, R}\vert_{L^{\infty}(\partial \Omega'_{\Gamma})}\leq C\vert u_{\lambda, R}\vert_{L^{2^{*}}(\mathcal{B})}, \quad \forall\, R \geq R_0.
$$
Fixing two sequences $\lambda_n \to +\infty$, $R_n \to +\infty$ and using Proposition \ref{prop1*},  we have for some subsequence  that $u_{\lambda_n, R_n}\rightarrow 0$ in $H^{1}(B_{R_n}(0)\backslash \Omega_{\Gamma})$, then $u_{\lambda_n, R_n}\rightarrow 0$ in $H^{1}(B_{R_0}(0)\backslash \Omega_{\Gamma})$, and so,
$$
\vert u_{\lambda_n, R_n}\vert_{L^{2^{*}}(\mathcal{B})}\rightarrow 0\,\,\,\text{as}\,\, n\rightarrow\infty.
$$

Hence, there exists $n_0 \in\mathbb{N}$ such that
$$
\vert u_{\lambda_n, R_n}\vert_{L^{\infty}(\partial \Omega'_{\Gamma})}\leq a_{0},\,\,\, \forall\, n \geq n_0.
$$
Now, for $n \geq n_0$ we set $\widetilde{u}_{\lambda_n, R_n}: B_{R_n}(0)\backslash \Omega'_{\Gamma}\rightarrow \mathbb{R}$ given by
\begin{align*}
\tilde{u}_{\lambda_n, R_n}(x)=(u_{\lambda_n, R_n}-a_{0})^{+}(x).
\end{align*}
Thereby, $\tilde{u}_{\lambda_n, R_n}(x)\in H_{0}^{1}(B_{R_n}(0)\backslash \Omega'_{\Gamma})$. Our goal is to show that $\tilde{u}_{\lambda_n, R_n}(x)=0$
in $B_{R_n}(0)\backslash \Omega'_{\Gamma}$, because this will ensure that
$$
\vert u_{\lambda_n, R_n}\vert_{\infty,\, B_{R_n}(0)\backslash \Omega'_{\Gamma}}\leq a_{0}.
$$
In fact, extending $\tilde{u}_{\lambda_n, R_n}(x)=0$
in $\Omega'_{\Gamma}$ and taking $\tilde{u}_{\lambda, R}$ as a test function, we obtain
\begin{equation*}
\int_{B_{R_n}(0)\backslash\Omega'_{\Gamma}}\nabla u_{\lambda_n, R_n}\nabla \tilde{u}_{\lambda_n, R_n}dx+\int_{B_{R_n}(0)\backslash\Omega'_{\Gamma}}(\lambda_n V(x)+1)u_{\lambda_n, R_n}\tilde{u}_{\lambda_n, R_n}dx\leq \int_{B_{R_n}(0)\backslash\Omega'_{\Gamma}}\tilde{F}'_2( u_{\lambda_n, R_n}))\tilde{u}_{\lambda_n, R_n}dx.
\end{equation*}
Since
\begin{eqnarray*}
&& \int_{B_{R_n}(0)\backslash\Omega'_{\Gamma}}\nabla u_{\lambda_n, R_n}\nabla \tilde{u}_{\lambda_n, R_n}dx=\int_{B_{R_n}(0)\backslash\Omega'_{\Gamma}}\vert \nabla \tilde{u}_{\lambda_n, R_n}\vert^{2} dx,\\
&&\int_{B_{R_n}(0)\backslash\Omega'_{\Gamma}}(\lambda_n V(x)+1)u_{\lambda_n, R_n}\tilde{u}_{\lambda_n, R_n}dx=\int_{(B_{R_n}(0)\backslash\Omega'_{\Gamma})_{+}}(\lambda_n V(x)+1)(\tilde{u}_{\lambda_n, R_n}+a_{0})\tilde{u}_{\lambda_n, R_n}dx,
\end{eqnarray*}
and
\begin{equation*}
\int_{B_{R_n}(0)\backslash\Omega'_{\Gamma}}\tilde{F}'_2( u_{\lambda_n, R_n})\tilde{u}_{\lambda_n, R_n}dx=\int_{(B_{R_n}(0)\backslash\Omega'_{\Gamma})_{+}}\frac{\tilde{F}'_2( u_{\lambda_n, R_n})}{u_{\lambda_n, R_n}}(\tilde{u}_{\lambda_n, R_n}+a_{0})\tilde{u}_{\lambda_n, R_n},
\end{equation*}
where
\begin{equation*}
(B_{R_n}(0)\backslash\Omega'_{\Gamma})_{+}=\{x\in B_{R_n}(0)\backslash\Omega'_{\Gamma}: u_{\lambda_n, R_n}(x)>a_{0}\}.
\end{equation*}
From the above equalities,
\begin{equation*}
\int_{B_{R_n}(0)\backslash\Omega'_{\Gamma}}\vert \nabla \tilde{u}_{\lambda_n, R_n}\vert^{2} dx+\int_{(B_{R_n}(0)\backslash\Omega'_{\Gamma})_{+}}\Big((\lambda_n V(x)+1)-\frac{\tilde{F}'_2( u_{\lambda_n, R_n})}{u_{\lambda_n, R_n}}\Big)(\tilde{u}_{\lambda_n, R_n}+a_{0})\tilde{u}_{\lambda_n, R_n}=0.
\end{equation*}
By the definition of $\tilde{F}'_2$, we know that
\begin{equation*}
(\lambda_n V(x)+1)-\frac{\tilde{F}'_2( u_{\lambda_n, R_n})}{u_{\lambda_n, R_n}}\geq 1-l> 0\quad\text{in}\,\,(B_{R_n}(0)\backslash\Omega'_{\Gamma})_{+}.
\end{equation*}
Thus, $\tilde{u}_{\lambda_n, R_n}=0$ in $(B_{R_n}(0)\backslash\Omega'_{\Gamma})_{+}$, and $\tilde{u}_{\lambda_n, R_n}=0$ in $B_{R_n}(0)\backslash\Omega'_{\Gamma}$.
From the above arguments, there exist $\lambda^{\prime}>0$ and $R^{\prime}>0$  such that
$$
\vert u_{\lambda, R}\vert_{\infty, B_R(0)\backslash \Omega'_{\Gamma}}\leq a_{0}, \quad \forall \lambda\geq \lambda^{\prime},\,\, R\geq R^{\prime}.
$$
The proof is complete.
\end{proof}

\subsection{A special minimax level}\label{tpm}

In this subsection, for any $\lambda>0$ and $j\in \Gamma$, let us denote by $I_{j}:  H_{0}^{1}(\Omega_{j})\rightarrow \mathbb{R}$ and $I_{\lambda, j}:H^{1}(\Omega'_{j})\rightarrow \mathbb{R}$  the functionals given by
\begin{equation*}
I_{j}(u)=\frac{1}{2}\int_{\Omega_{j}}\big (|\nabla u|^2+|u|^2\big)dx-\frac{1}{2}\int_{\Omega_{j}}u^2\log u^{2} \,dx,
\end{equation*}

\begin{equation*}
I_{\lambda, j}(u)=\frac{1}{2}\int_{\Omega'_{j}}\big (|\nabla u|^2+(\lambda V(x)+1)|u|^2\big)dx-\frac{1}{2}\int_{\Omega'_{j}}u^2\log u^{2} \,dx,
\end{equation*}
which are the energy functionals associated to  the following logarithmic equations
$$
\left\{
\begin{array}{lc}
-\Delta u=u \log u^2, \quad\, \text{in} \,\, \Omega_{j}, \\
u=0,  \quad\quad\quad\quad\quad\,\,\, \text{on} \,\, \partial \Omega_{j},  \\
\end{array}
\right.
\eqno{(D_{j})}
$$
and
$$
\left\{
\begin{array}{lc}
-\Delta u+ \lambda V(x)u=u \log u^2, \quad\,\text{in}\,\in\Omega'_{j},\\
\frac{\partial u}{\partial \eta}=0, \quad \quad\quad\quad\quad\quad\quad\quad\quad\,\,\,\text{on}\,\, \partial \Omega'_{j}.
\end{array}
\right.
\eqno{(N_{j})}
$$
It is immediate to check that $I_{j}$ and $I_{\lambda, j}$ satisfy the mountain pass geometry. Since $\Omega_{j}$ and $\Omega'_{j}$ are bounded, $I_{j}$ and $I_{\lambda, j}$
satisfy the Palais-Smale condition, from Mountain Pass Theorem due to Ambrosetti-Rabinowitz \cite{AmbRabi}, there exist two positive functions $\omega_{j}\in H^{1}_{0}(\Omega_{j})$ and $\omega_{\lambda, j}\in H^{1}(\Omega'_{j})$ verifying
$$
I_{j}(\omega_{j})=c_{j},\quad I_{\lambda, j}(\omega_{\lambda, j})=c_{\lambda, j}\,\,\, \text{and}\,\,\,I'_{j}(\omega_{j})=I'_{\lambda, j}(\omega_{\lambda, j})=0,
$$
where
$$
c_{j}=\inf_{\gamma\in \Upsilon_{j}}\max_{t\in [0, 1]}I_{j}(\gamma(t)),
$$
$$
c_{\lambda, j}=\inf_{\gamma\in \Upsilon_{\lambda, j}}\max_{t\in [0, 1]}I_{\lambda, j}(\gamma(t)),
$$
$$
\Upsilon_{j}=\{\gamma\in C([0, 1], H^{1}_{0}(\Omega_{j})): \gamma(0)=0, \,\, \text{and}\,\, I_{j}(\gamma(1))<0\},
$$
and
$$
\Upsilon_{\lambda, j}=\{\gamma\in C([0, 1], H^{1}(\Omega'_{j})): \gamma(0)=0, \,\, \text{and}\,\, I_{\lambda, j}(\gamma(1))<0\}.
$$
In fact, a simple computation gives
$$
c_{j}=\inf_{u\in \mathcal{N}_{j}}I_{j}(u),
$$
$$
c_{\lambda, j}=\inf_{u\in \mathcal{N}'_{j}}I_{\lambda, j}(u),
$$
where
$$
\mathcal{N}_{j}=\left\{u \in H_0^{1}(\Omega_{j}) \backslash \{0\}: I'_{j}(u)u=0  \right\},
$$
and
$$
\mathcal{N}'_{j}=\left\{u \in H^{1}(\Omega'_{j}) \backslash \{0\}: I'_{\lambda, j}(u)u=0  \right\}.
$$

Moreover, by a direct computation, there exists $\tau>0$ such that if $u\in \mathcal{N}_{j}$, for any $j\in \Gamma$, then
\begin{equation}\label{below}
\Vert u\Vert_{j}>\tau,
\end{equation}
where $\Vert \,\, \Vert_{j}$ denotes the norm on $H_{0}^{1}(\Omega_{j})$ given by
$$
\Vert u \Vert_{j}=\Big(\int_{\Omega_{j}}(\vert \nabla u\vert^{2}+ \vert  u\vert^{2})dx\Big)^{\frac{1}{2}}.
$$
In particular, as  $\omega_{j}\in \mathcal{N}_{j}$, we must have
\begin{equation*}\label{below1}
\Vert\omega_{\lambda, j}\Vert_{j}>\tau,
\end{equation*}
where $\omega_{\lambda, j}=\omega_{j}\mid_{\Omega_{j}}$ for all $j\in \Gamma$.

In what follows, $c_{\Gamma}=\sum_{j=1}^{l}c_{j}$ and $T>0$ is a constant large enough, which does not depend on $\lambda$ and $R>0$ large enough, such that
\begin{equation}\label{4.1}
0<I'_{j}(\frac{1}{T}\omega_{j})(\frac{1}{T}\omega_{j}),\quad I'_{j}(T \omega_{j})(T \omega_{j})<0\,\, \,\,\forall j\in \Gamma.
\end{equation}
Hence, by the definition of $c_{j}$, one has
$$
\max_{s\in [1/T^{2}, 1]}I_{j}(sT\omega_{j})=c_{j}, \,\, \,\,\forall j\in \Gamma.
$$
Without loss of generality, we consider $\Gamma=\{1, 2, \cdots, l\}$, with $l\leq k$ and fix
$$
\gamma_{0}(s_{1}, s_{2}, \cdots, s_{l})(x)=\sum_{j=1}^{l}s_{j}T\omega_{j}(x),\quad \forall (s_{1}, s_{2}, \cdots, s_{l})\in [1/T^{2}, 1]^{l},
$$
$$
\Gamma_{*}=\{\gamma\in C([1/T^{2}, 1]^{l}, E_{\lambda,R}\backslash \{0\}): \gamma=\gamma_{0}\,\,\,\text{on}\,\, \partial ([1/T^{2}, 1]^{l})\},
$$
and
$$
b_{\lambda,R, \Gamma}=\inf_{\gamma\in \Gamma_{*}}\max_{(s_{1}, s_{2}, \cdots, s_{l})\in [1/T^{2}, 1]^{l}}\Phi_{\lambda, R}(\gamma(s_{1}, s_{2}, \cdots, s_{l})).
$$
We remark that $\gamma_{0}\in \Gamma_{*}$, so  $\Gamma_{*}\neq \emptyset$ and $b_{\lambda,R, \Gamma}$ is well-defined.

\begin{lemma}\label{Lem:3.2a}
For each $\gamma\in \Gamma_{*}$, there exists $(t_{1}, t_{2}, \cdots, t_{l})\in [1/T^{2}, 1]^{l}$ such that
$$
I'_{\lambda, j}(\gamma(t_{1}, \cdots, t_{l}))\gamma(t_{1}, \cdots, t_{l})=0, \,\, \text{for}\,\, j\in\{1, \cdots, l\}.
$$
\end{lemma}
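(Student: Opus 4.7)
The plan is to apply the Poincar\'e--Miranda theorem (the $l$-dimensional intermediate value theorem on the cube) to the continuous map
$$
\Psi:[1/T^{2},1]^{l}\to\mathbb{R}^{l},\qquad
\Psi_{j}(s_{1},\dots,s_{l})=I'_{\lambda,j}\bigl(\gamma(s_{1},\dots,s_{l})|_{\Omega'_{j}}\bigr)\bigl[\gamma(s_{1},\dots,s_{l})|_{\Omega'_{j}}\bigr].
$$
Continuity of each component $\Psi_{j}$ follows from the continuity of $\gamma:[1/T^{2},1]^{l}\to E_{\lambda,R}\setminus\{0\}$, from the continuity of the restriction map $E_{\lambda,R}\to H^{1}(\Omega'_{j})$, and from the fact that $I_{\lambda,j}$ is of class $C^{1}$ on $H^{1}(\Omega'_{j})$ via the same decomposition $\tfrac{1}{2}u^{2}\log u^{2}=F_{2}(u)-F_{1}(u)$ used for $\Phi_{\lambda,R}$; in particular $I'_{\lambda,j}(u)u=\|u\|^{2}_{\lambda,j}+\int_{\Omega'_{j}}F'_{1}(u)u\,dx-\int_{\Omega'_{j}}F'_{2}(u)u\,dx$ is well-defined and continuous.

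The geometric heart of the argument is the analysis of $\Psi$ on $\partial([1/T^{2},1]^{l})$. By the definition of $\Gamma_{*}$, one has $\gamma=\gamma_{0}$ on this boundary, and since $\mathrm{supp}(\omega_{i})\subset\overline{\Omega_{i}}\subset\Omega'_{i}$ together with $\overline{\Omega'_{i}}\cap\overline{\Omega'_{j}}=\emptyset$ for $i\neq j$, we have the crucial single-component reduction
$$
\gamma_{0}(s_{1},\dots,s_{l})\big|_{\Omega'_{j}}=s_{j}T\omega_{j},
$$
independently of the other $s_{i}$'s. Moreover, because $V\equiv 0$ on $\overline{\Omega_{j}}$, the functional $I_{\lambda,j}$ evaluated on any function supported in $\overline{\Omega_{j}}$ coincides with $I_{j}$ on that function. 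Hence on the face $\{s_{j}=1/T^{2}\}$,
$$
\Psi_{j}(s)=I'_{j}\!\left(\tfrac{1}{T}\omega_{j}\right)\!\left(\tfrac{1}{T}\omega_{j}\right)>0,
$$
while on the opposite face $\{s_{j}=1\}$,
$$
\Psi_{j}(s)=I'_{j}(T\omega_{j})(T\omega_{j})<0,
$$
both inequalities furnished by \eqref{4.1} and independent of the remaining coordinates.

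With these opposite-sign conditions verified on each pair of opposite faces of the cube, Poincar\'e--Miranda yields a point $(t_{1},\dots,t_{l})\in[1/T^{2},1]^{l}$ (in fact in the interior) with $\Psi(t_{1},\dots,t_{l})=0$, which is precisely the conclusion of the lemma. I expect the topological step itself to be routine; the principal technical point to check carefully is the boundary computation, namely that the product structure of $\gamma_{0}$ together with the disjointness of the supports $\overline{\Omega'_{j}}$ and the vanishing of $V$ on $\overline{\Omega_{j}}$ make $\Psi_{j}$ on each face depend only on $s_{j}$ and reduce exactly to the single-component expression $I'_{j}(s_{j}T\omega_{j})(s_{j}T\omega_{j})$.
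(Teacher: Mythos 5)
Your proposal is correct and follows essentially the same route as the paper: both define the map $(s_{1},\dots,s_{l})\mapsto\bigl(I'_{\lambda,j}(\gamma(s))\gamma(s)\bigr)_{j=1}^{l}$, use that $\gamma=\gamma_{0}$ on $\partial([1/T^{2},1]^{l})$ together with \eqref{4.1} to get the opposite-sign boundary conditions, and conclude by Miranda's theorem. Your write-up merely makes explicit the boundary reduction (disjoint supports, $V\equiv 0$ on $\overline{\Omega}_{j}$) that the paper leaves implicit.
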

\begin{proof}
Given $\gamma\in \Gamma_{*}$, consider the map $\widetilde{\gamma}: [1/T^{2}, 1]^{l}\rightarrow \mathbb{R}^l$ defined by
$$
\widetilde{\gamma}(s_{1},  \cdots, s_{l})=\Big(I'_{\lambda, 1}(\gamma(s_{1}, \cdots, s_{l}))\gamma(s_{1}, \cdots, s_{l}), \cdots, I'_{\lambda, l}(\gamma(s_{1}, \cdots, s_{l}))\gamma(s_{1}, \cdots, s_{l})\Big).
$$
For $(s_{1},  \cdots, s_{l})\in \partial ([1/T^{2}, 1]^{l})$, we know that
$$
\gamma(s_{1},  \cdots, s_{l})=\gamma_{0}(s_{1},  \cdots, s_{l}).
$$
Now, the lemma follows by employing \eqref{4.1} and Miranda's Theorem \cite{M}.
\end{proof}

\begin{lemma} \label{lemma1}
	\noindent (a)  $\sum_{j=1}^{l}c_{\lambda, j}\leq b_{\lambda,R, \Gamma}\leq c_{\Gamma}$, for any $\lambda> 0$ and $R>0$ large enough;

	\noindent (b) For $\gamma\in \Gamma_{*}$ and $(s_{1},  \cdots, s_{l})\in \partial ([1/T^{2}, 1]^{l})$, we have
$$
\Phi_{\lambda, R}(\gamma(s_{1},  \cdots, s_{l}))< c_{\Gamma},\,\, \forall \lambda> 0.
$$
\end{lemma}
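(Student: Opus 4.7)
My plan rests on a disjoint-support decomposition for $\gamma_0$. Since $\mathrm{supp}(\omega_j)\subset\Omega_j\subset\{V=0\}$, the $\Omega_j$ are pairwise disjoint, and each $\omega_j\ge 0$, for every $s=(s_1,\dots,s_l)\in[1/T^2,1]^l$ the functional separates:
$$\Phi_{\lambda,R}(\gamma_0(s))=\sum_{j=1}^l I_j(s_jT\omega_j),$$
with each summand depending on a single $s_j$. By \eqref{4.1}, the derivative of $s\mapsto I_j(sT\omega_j)$ is positive at $s=1/T^2$ and negative at $s=1$, so its unique maximum on $[1/T^2,1]$ equals $c_j=I_j(\omega_j)$, attained at the interior point $s=1/T$; in particular $I_j(s_jT\omega_j)<c_j$ whenever $s_j\in\{1/T^2,1\}$.

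Part (b) follows immediately. For $s\in\partial([1/T^2,1]^l)$ we have $\gamma(s)=\gamma_0(s)$, and at least one coordinate $s_{j_0}\in\{1/T^2,1\}$, so $I_{j_0}(s_{j_0}T\omega_{j_0})<c_{j_0}$ while $I_j(s_jT\omega_j)\le c_j$ for the other indices; summing yields $\Phi_{\lambda,R}(\gamma(s))<c_\Gamma$. The upper bound in (a) comes from the same decoupling: since $\gamma_0\in\Gamma_*$ and each summand is a function of only one $s_j$,
$$b_{\lambda,R,\Gamma}\le\max_{s\in[1/T^2,1]^l}\Phi_{\lambda,R}(\gamma_0(s))=\sum_{j=1}^l\max_{s_j\in[1/T^2,1]}I_j(s_jT\omega_j)=\sum_{j=1}^l c_j=c_\Gamma.$$

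For the lower bound $\sum_j c_{\lambda,j}\le b_{\lambda,R,\Gamma}$ in (a), I would fix an arbitrary $\gamma\in\Gamma_*$, use Lemma \ref{Lem:3.2a} to obtain $t\in[1/T^2,1]^l$ with $v:=\gamma(t)$ satisfying $I'_{\lambda,j}(v)v=0$ for every $j$, and prove $\Phi_{\lambda,R}(v)\ge\sum_j c_{\lambda,j}$. Writing $v_j:=v|_{\Omega'_j}$, I split the integral into the pieces over $\Omega'_j$ and over $B_R(0)\setminus\Omega'_\Gamma$. Outside $\Omega'_\Gamma$ we have $G'_2(x,s)=\widetilde{F}'_2(s)\le ls$ with $l<1$, which yields $\widetilde{F}_2(s)\le\tfrac{l}{2}s^2$ and, together with $F_1\ge 0$, makes the outside contribution nonnegative. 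On each $\Omega'_j$, the identity $F_2(s)-F_1(s)=\tfrac12 s^2\log s^2$ combined with the evenness of $F_1$ (so that pointwise $F_1(v)=F_1(v^+)+F_1(v^-)$ and $v^2\log v^2=(v^+)^2\log(v^+)^2+(v^-)^2\log(v^-)^2$, since $v^+v^-=0$) gives
$$\Phi_{\lambda,R}^{(j)}(v_j)-I_{\lambda,j}(v_j)=\int_{\Omega'_j}F_2(v_j^-)\,dx\ge 0,$$
so $\Phi_{\lambda,R}^{(j)}(v_j)\ge I_{\lambda,j}(v_j)$. Finally, provided $v_j\not\equiv 0$, the condition $I'_{\lambda,j}(v_j)v_j=0$ places $v_j$ in $\mathcal{N}'_j$, whence $I_{\lambda,j}(v_j)\ge c_{\lambda,j}$ by the Nehari characterization. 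Summing over $j$ and then taking the infimum over $\gamma$ yields (a).

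The one delicate step is the nontriviality $v_j\not\equiv 0$ needed to place $v_j$ in $\mathcal{N}'_j$, and I expect this to be the main obstacle: the Miranda-type argument behind Lemma \ref{Lem:3.2a} by itself only produces \emph{some} zero of the coordinate functionals, and a trivial zero with $v_j\equiv 0$ on some $\Omega'_j$ satisfies $I'_{\lambda,j}(v)v=0$ vacuously. I would handle this by a small perturbation of $\gamma$ inside $\Gamma_*$ that preserves the boundary trace $\gamma_0$ (whose restriction to each $\Omega'_j$ is the nontrivial $s_jT\omega_j$ on the relevant faces) and exploits the strict sign change of $I'_{\lambda,j}(\gamma_0(s))\gamma_0(s)$ between $\{s_j=1/T^2\}$ and $\{s_j=1\}$ given by \eqref{4.1} to push the Miranda zero into the open region where every $v_j\not\equiv 0$.
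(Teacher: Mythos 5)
Your argument for part (b) and for the upper bound in (a) is correct and is the standard computation behind the reference the paper cites (Proposition 4.2 of \cite{Alves}): the disjoint supports of the $\omega_j$ inside $\{V=0\}$ decouple $\Phi_{\lambda,R}\circ\gamma_0$ into $\sum_j I_j(s_jT\omega_j)$, and the explicit identity $I_j(t\omega_j)=t^2\bigl[c_j-\log t\int_{\Omega_j}\omega_j^2\,dx\bigr]$ shows this one-variable function attains its strict maximum $c_j$ only at $t=1$ (i.e.\ $s_j=1/T$), which gives both the interchange of $\max$ and $\sum$ and the strict inequality on $\partial([1/T^2,1]^l)$. The lower bound via Lemma \ref{Lem:3.2a}, the splitting of $\Phi_{\lambda,R}$ into the $\Omega'_j$-pieces plus the exterior piece, the estimates $\widetilde{F}'_2(s)\le ls$ with $l<1$ and $F_1\ge 0$ for the exterior, the identity $\Phi^{(j)}-I_{\lambda,j}=\int_{\Omega'_j}F_2(v_j^-)\,dx\ge 0$, and the Nehari characterization $c_{\lambda,j}=\inf_{\mathcal{N}'_j}I_{\lambda,j}$ are all correct and are exactly the intended route.

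The one point where your proposal goes astray is the fix you suggest for the nontriviality $v_j\not\equiv 0$. You are right that, with $\Gamma_*$ read literally as maps into $E_{\lambda,R}\setminus\{0\}$, a Miranda zero could in principle sit at a $t$ with $\gamma(t)|_{\Omega'_j}\equiv 0$ for some $j$, and then $I_{\lambda,j}(v_j)=0<c_{\lambda,j}$ breaks the sum. But the perturbation argument you sketch is not convincing as stated: perturbing $\gamma$ while keeping the boundary trace does not by itself control where the Miranda zero lands, and \eqref{4.1} only constrains the coordinate functionals on $\partial([1/T^2,1]^l)$, not in the interior. The resolution intended in this line of papers is definitional rather than analytic: $\Gamma_*$ is meant to consist of maps with $\gamma(\mathbf{s})|_{\Omega'_j}\neq 0$ for every $j\in\Gamma$ and every $\mathbf{s}$ (equivalently, one restricts to admissible maps valued in a set like $\Upsilon_{2\kappa}$). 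This reading is confirmed by the paper's own use of $\Gamma_*$ in the proof of Proposition \ref{prop72}, where membership of $\widehat{\eta}$ in $\Gamma_*$ is established precisely by checking $\widehat{\eta}(\mathbf{t})\in\Upsilon_{2\kappa}$, ``and so $\widehat{\eta}(\mathbf{t})|_{\Omega'_j}\neq 0$ for all $j\in\Gamma$.'' With that definition the Miranda point automatically satisfies $v_j\neq 0$ for every $j$, each $v_j$ lies in $\mathcal{N}'_j$, and your chain of inequalities closes without any perturbation. Note also that $\gamma_0$ itself satisfies this stronger condition, so $\Gamma_*$ remains nonempty and the upper bound is unaffected.
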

The proof of the lemma is the same as that of Proposition 4.2 in \cite{Alves}, so we omit it.

\vspace{0.5 cm}

\begin{corollary}\label{coro1}
\noindent (a)  $b_{\lambda,R,\Gamma}$ is a critical value of $\Phi_{\lambda, R}$ for $\lambda>0$ and $R>0$ large enough.

\noindent (b)  $b_{\lambda,R, \Gamma}\rightarrow c_{\Gamma}$, when $\lambda\rightarrow +\infty$ uniform for $R>0$ large.

\end{corollary}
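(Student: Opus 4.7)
My plan is to derive both parts from the sandwich
\[
\sum_{j=1}^l c_{\lambda,j} \ \le\ b_{\lambda,R,\Gamma} \ \le\ c_\Gamma
\]
already established in Lemma \ref{lemma1}(a), proving (b) first and then invoking it to verify the strict minimax separation required for a deformation argument in (a). Since $c_{\lambda,j}$ and $c_j$ do not depend on $R$, uniformity in $R$ will be built into the argument.

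For (b), it suffices to show $c_{\lambda,j} \to c_j$ as $\lambda \to \infty$ for each $j \in \Gamma$. The upper bound $c_{\lambda,j} \le c_j$ is immediate: extending any $u \in \mathcal{N}_j$ by zero gives $\tilde u \in \mathcal{N}_j'$ with $I_{\lambda,j}(\tilde u) = I_j(u)$ because $V \equiv 0$ on $\Omega_j$. For the matching lower bound, I would take the mountain pass solutions $\omega_{\lambda,j} \in \mathcal{N}_j'$ of $(N_j)$ constructed above. Their $H^1(\Omega'_j)$-norms are bounded uniformly in $\lambda$ via the same argument as Lemma \ref{boundedness}, using Lemma \ref{P1}. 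Along a subsequence $\omega_{\lambda,j} \rightharpoonup \omega$ weakly in $H^1(\Omega'_j)$; boundedness of $\lambda\int_{\Omega'_j} V\,\omega_{\lambda,j}^2$ forces $\omega \equiv 0$ on $\Omega'_j \setminus \overline{\Omega_j}$, so $\omega \in H^1_0(\Omega_j)$. A strong convergence argument analogous to Proposition \ref{prop1*} (splitting the nonlinearity as $F_2'-F_1'$ and applying dominated convergence together with Lemma \ref{P1}) then shows that $\omega$ solves $(D_j)$ and, by the a priori bound \eqref{below}, is nontrivial. Hence $c_{\lambda,j} = I_{\lambda,j}(\omega_{\lambda,j}) \to I_j(\omega) \ge c_j$, which combined with the upper bound yields $c_{\lambda,j} \to c_j$ and therefore $b_{\lambda,R,\Gamma} \to c_\Gamma$.

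For (a), I would use (b) to create a strict minimax gap. Because the $\omega_j$ have disjoint supports contained in $\Omega_j \subset \{V=0\}$, one has the identity
\[
\Phi_{\lambda,R}(\gamma_0(s_1,\ldots,s_l)) = \sum_{j=1}^l I_j(s_j T \omega_j) \qquad \forall\,(s_1,\ldots,s_l) \in [1/T^2,1]^l,
\]
which is independent of $\lambda$ and $R$. By \eqref{4.1} and the fact that $\max_{s\in[1/T^2,1]} I_j(sT\omega_j) = c_j$ is attained in the interior, a compactness argument on $\partial([1/T^2,1]^l)$ yields $\delta_0 > 0$, independent of $\lambda,R$, with
\[
\max_{\partial([1/T^2,1]^l)} \Phi_{\lambda,R}(\gamma_0) \ \le\ c_\Gamma - \delta_0.
\]
By (b), choosing $\lambda$ large enough gives $b_{\lambda,R,\Gamma} > c_\Gamma - \delta_0/2$, yielding strict separation from the boundary values. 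Assuming $b_{\lambda,R,\Gamma}$ is not critical and invoking the Palais--Smale property from Lemma \ref{compacidadeR}, the standard deformation lemma applied to $\Phi_{\lambda,R} \in C^1(E_{\lambda,R},\mathbb{R})$ produces $\eta \in C([0,1]\times E_{\lambda,R},E_{\lambda,R})$ that fixes points below level $b_{\lambda,R,\Gamma}-\varepsilon$ and pushes the $(b+\varepsilon)$-sublevel into the $(b-\varepsilon)$-sublevel. Taking $\varepsilon < \delta_0/4$ ensures $\eta\circ\gamma \in \Gamma_*$ whenever $\gamma \in \Gamma_*$ (with the deformation kept away from $0$ since $\Phi_{\lambda,R}(0)=0 < b_{\lambda,R,\Gamma}$), contradicting the definition of $b_{\lambda,R,\Gamma}$.

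The main obstacle will be the lower bound $\liminf_{\lambda\to\infty} c_{\lambda,j} \ge c_j$ in (b): unlike the power-type nonlinearity treated in the Schr\"odinger literature, the logarithmic energy is not globally $C^1$ on $H^1$, so one must rely on the auxiliary splitting $F_1,F_2$ together with the logarithmic Sobolev estimate of Lemma \ref{P1} both to bound the mountain pass sequence $\omega_{\lambda,j}$ and to justify passing to the limit in the nonlinear term. Once this convergence is secured, the rest of the argument is standard minimax machinery.
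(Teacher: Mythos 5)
Your argument is correct and is essentially the proof the paper omits (it defers to Corollary 4.3 of \cite{Alves}): part (b) follows from the sandwich $\sum_{j}c_{\lambda,j}\le b_{\lambda,R,\Gamma}\le c_\Gamma$ of Lemma \ref{lemma1}(a) together with $c_{\lambda,j}\to c_j$, proved by passing to the limit in the solutions $\omega_{\lambda,j}$ of $(N_j)$ exactly as in Proposition \ref{prop1*}, and part (a) follows from the resulting strict separation of $b_{\lambda,R,\Gamma}$ from the ($\lambda$- and $R$-independent) boundary maximum of $\Phi_{\lambda,R}\circ\gamma_0$, the $(PS)$ condition of Lemma \ref{compacidadeR}, and a standard deformation. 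The only imprecision is the appeal to \eqref{below} for the nontriviality of the limit $\omega$: that estimate is stated for $\mathcal{N}_j$, whereas what you need is the analogous $\lambda$-uniform lower bound on $\mathcal{N}'_j$, which follows from the same computation using \eqref{eq5} and $\Vert u\Vert_{H^{1}(\Omega'_j)}\le \Vert u\Vert_{\lambda,\Omega'_j}$.
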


The proof of the corollary is similar to that of Corollary 4.3 in \cite{Alves}, here we also omit it.

\subsection{A special solution for the auxiliary problem}

Hereafter, let us denote by $\Phi_{\lambda, R}^{c_{\Gamma}}$ and $\Upsilon$ the sets below
\begin{align*}
	\Phi_{\lambda, R}^{c_{\Gamma}}=\{u\in E_{\lambda,R}:  \Phi_{\lambda, R}(u)\leq c_{\Gamma}\}
\end{align*}
and
$$
\Upsilon=\Big\{ u\in E_{\lambda,R}: \Vert u\Vert_{\lambda, \Omega'_{j}}>\frac{\tau}{2T}, \,\, \forall j\in\Gamma \Big\}
$$
where $\tau,T$ were fixed in \eqref{below} and \eqref{4.1} respectively.\\

Fixing $\kappa=\frac{\tau}{8 T}$ and $\mu>0$, we define
\begin{align*}
A_{\mu,R}^{\lambda}=\{u\in \Upsilon_{2\kappa} : \Phi_{\lambda, B_R(0)\backslash \Omega'_{\Gamma}}(u)\geq 0, \,\Vert u\Vert^{2}_{\lambda,B_R(0)\backslash \Omega_{\Gamma}}\leq\mu, \,\,\vert I_{\lambda, j}(u)-c_{j}\vert\leq \mu, \,\, \forall j\in \Gamma\}
\end{align*}
where $\Upsilon_{r}$, for $r>0$, denotes the set $\Upsilon_{r}=\{u\in E_{\lambda,R}: \inf_{v\in \Upsilon}\Vert u-v\Vert_{\lambda, \Omega'_{j}}\leq r,\,\forall j\in\Gamma$\}.  Notice that $w=\sum_{j=1}^{l}w_{j}\in A_{\mu,R}^{\lambda}\cap  \Phi_{\lambda, R}^{c_{\Gamma}}$ which shows that $A_{\mu,R}^{\lambda}\cap  \Phi_{\lambda, R}^{c_{\Gamma}}\neq\emptyset$.

Next, we shall establish a very important uniform estimate of $\Vert \Phi'_{\lambda, R}(u)\Vert$ in the set $(A_{2\mu,R}^{\lambda}\backslash A_{\mu,R}^{\lambda})\cap  \Phi_{\lambda, R}^{c_{\Gamma}}$.

\begin{proposition}\label{prop71}
For each $\mu>0$, there exist $\Lambda_{*}> 0$ and $R^*>0$ large enough  and $\sigma_{0}>0$ independent of $\lambda$ and $R>0$ large  such that
\begin{align*}
\Vert \Phi'_{\lambda, R}(u)\Vert\geq \sigma_{0}\quad \text{for}\,\lambda\geq \Lambda_{*}, R \geq R^* \,\,\text{and}\,\, u\in (A_{2\mu,R}^{\lambda}\backslash A_{\mu,R}^{\lambda})\cap  \Phi_{\lambda, R}^{c_{\Gamma}}.
\end{align*}
\end{proposition}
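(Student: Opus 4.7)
\textbf{Proof proposal for Proposition \ref{prop71}.} My plan is to argue by contradiction in the standard multi-bump style: suppose there exist sequences $\lambda_n\to\infty$, $R_n\to\infty$, and $u_n\in(A_{2\mu,R_n}^{\lambda_n}\setminus A_{\mu,R_n}^{\lambda_n})\cap \Phi_{\lambda_n,R_n}^{c_\Gamma}$ with $\|\Phi'_{\lambda_n,R_n}(u_n)\|\to 0$. Since $\Phi_{\lambda_n,R_n}(u_n)\le c_\Gamma$ is bounded and $\|\Phi'_{\lambda_n,R_n}(u_n)\|\to 0$, $(u_n)$ is a $(PS)_{\infty,R_n}$-sequence in the sense of Subsection~2.2, so Corollary~\ref{coro} gives boundedness in $E_{\lambda_n,R_n}$ and Proposition~\ref{prop1*} yields, up to a subsequence, some $u\in H^1(\mathbb{R}^N)$ with $u\equiv 0$ on $\mathbb{R}^N\setminus\Omega_\Gamma$, $u|_{\Omega_\Gamma}$ a solution of $(P_{\infty,\Gamma})$, the strong convergence $u_n\to u$ in $H^1(\mathbb{R}^N)$, and the three norm identities listed in item $(iii)$ of that proposition.

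The next step is to show that $u|_{\Omega_j}\not\equiv 0$ for every $j\in\Gamma$. Because $u_n\in\Upsilon_{2\kappa}$, for each $j\in\Gamma$ there is $v_n\in\Upsilon$ with $\|u_n-v_n\|_{\lambda_n,\Omega'_j}\le 2\kappa+o_n(1)$, hence $\|u_n\|_{\lambda_n,\Omega'_j}\ge \tau/(2T)-2\kappa=2\kappa>0$. Passing to the limit via the third convergence in Proposition~\ref{prop1*}$(iii)$ gives $\|u\|_j\ge 2\kappa$, so $u|_{\Omega_j}\in\mathcal N_j$ and therefore $I_j(u|_{\Omega_j})\ge c_j$. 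I would then compute $\lim_n\Phi_{\lambda_n,R_n}(u_n)$: combining $\lambda_n\!\int V|u_n|^2\to 0$, the $H^1$-convergence, the vanishing of $u$ outside $\Omega_\Gamma$, and the subcritical growth of $F_2$ together with the Lebesgue convergence of the $F_1$-integrals used already in Proposition~\ref{prop1*}, one obtains $\Phi_{\lambda_n,R_n}(u_n)\to \sum_{j\in\Gamma} I_j(u|_{\Omega_j})$. Since $\Phi_{\lambda_n,R_n}(u_n)\le c_\Gamma=\sum_j c_j$ and each term is $\ge c_j$, equality must hold for every $j$: $I_j(u|_{\Omega_j})=c_j$.

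With these convergences in hand I will verify that $u_n\in A_{\mu,R_n}^{\lambda_n}$ for all large $n$, contradicting $u_n\not\in A_{\mu,R_n}^{\lambda_n}$. The membership $u_n\in\Upsilon_{2\kappa}$ is free. The nonnegativity of $\Phi_{\lambda_n,B_{R_n}(0)\setminus\Omega'_\Gamma}(u_n)$ is actually automatic, uniformly in $n$: on $B_{R_n}(0)\setminus\Omega'_\Gamma$ one has $\chi_\Gamma=0$ and $\tilde F'_2(s)\le ls$, so $G_2(x,s)=\tilde F_2(s)\le \tfrac{l}{2}s^2$, and together with $F_1\ge 0$ this gives $\Phi_{\lambda_n,B_{R_n}(0)\setminus\Omega'_\Gamma}(u_n)\ge \tfrac{1-l}{2}\|u_n\|^2_{\lambda_n,B_{R_n}(0)\setminus\Omega'_\Gamma}\ge 0$. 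The inequality $\|u_n\|^2_{\lambda_n,B_{R_n}(0)\setminus\Omega_\Gamma}\le\mu$ follows from the second convergence in Proposition~\ref{prop1*}$(iii)$, and $|I_{\lambda_n,j}(u_n)-c_j|\le\mu$ follows from the computation $I_{\lambda_n,j}(u_n)\to I_j(u|_{\Omega_j})=c_j$, obtained by combining the third norm identity of Proposition~\ref{prop1*}$(iii)$ with the strong convergence of the nonlinear terms on the bounded set $\Omega'_j$.

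The step I expect to be most delicate is identifying the limit energy as $\sum_{j\in\Gamma} I_j(u|_{\Omega_j})$, because $\Phi_{\lambda_n,R_n}$ involves the truncated nonlinearity $G_2$ on all of $B_{R_n}(0)$, the logarithmic term is not locally Lipschitz at $0$, and the domains $B_{R_n}(0)$ are exhausting $\mathbb{R}^N$. I plan to handle this by splitting the integrals into the pieces $\Omega'_\Gamma$ (where $G_2=F_2$ and strong $H^1$-convergence on a bounded set together with the splitting $F_2-F_1=\tfrac12 s^2\log s^2$ applies directly) and $B_{R_n}(0)\setminus\Omega'_\Gamma$ (where $G_2=\tilde F_2\le \tfrac{l}{2}s^2$ and $\|u_n\|^2_{\lambda_n,B_{R_n}(0)\setminus\Omega_\Gamma}\to 0$ kills the contribution). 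Once this limit is identified and the minimax inequality $\sum_j I_j(u|_{\Omega_j})\le c_\Gamma$ forces $I_j(u|_{\Omega_j})=c_j$, the four defining conditions of $A_{\mu,R_n}^{\lambda_n}$ hold for all large $n$, producing the desired contradiction.
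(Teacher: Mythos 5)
Your proposal is correct and follows essentially the same route as the paper: a contradiction argument in which the sequence $(u_n)$ is recognized as a $(PS)_{\infty,R_n}$ sequence, Proposition \ref{prop1*} supplies the strong limit $u$ vanishing outside $\Omega_\Gamma$, the membership $u_n\in\Upsilon_{2\kappa}$ forces $u|_{\Omega_j}\in\mathcal N_j$ and hence $I_\Gamma(u)\geq c_\Gamma$, and the bound $\Phi_{\lambda_n,R_n}(u_n)\leq c_\Gamma$ then pins $I_\Gamma(u)=c_\Gamma$ so that $u_n\in A^{\lambda_n}_{\mu,R_n}$ for large $n$, a contradiction. If anything, you are more careful than the paper, which only checks the norm and energy conditions and does not explicitly verify the conditions $\Phi_{\lambda,B_R(0)\backslash\Omega'_\Gamma}(u_n)\geq0$ and $|I_{\lambda_n,j}(u_n)-c_j|\leq\mu$ that you address.
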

\begin{proof}
Arguing by contradiction, we assume that there exist $\lambda_{n},R_n\rightarrow \infty$ and $u_{n}\in (A_{2\mu,R_n}^{\lambda_{n}}\backslash A_{\mu,R_n}^{\lambda_{n}})\cap  \Phi_{\lambda_{n}, R_n}^{c_{\Gamma}}$
such that
$$\Vert \Phi'_{\lambda_{n}, R_n}(u_{n})\Vert\rightarrow 0.$$
Since $u_{n}\in A_{2\mu,R_n}^{\lambda_{n}}$, we know that $(\Vert u_{n}\Vert_{\lambda_{n}, R_n})$ and
$(\Phi_{\lambda_{n}, R_n}(u_{n}))$ are both bounded. Then, passing to a subsequence if necessary, we can assume that $(\Phi_{\lambda_{n}, R_n}(u_{n}))$ is a convergent sequence.
Thus, from Proposition \ref{prop1*}, there exists $0\leq u\in H_{0}^{1}(\Omega_{\Gamma})$ such that $u$ is a solution for $(D_{j})$ and
$$
u_{n}\rightarrow u\quad \text{in}\,\, H^{1}(\mathbb{R}^N),\quad \Vert u_{n}\Vert^{2}_{\lambda_{n},B_{R_n}(0)\backslash \Omega_{\Gamma}}\rightarrow 0,\,\, \text{and}\,\, \Phi_{\lambda_{n}, R_n}(u_{n})\rightarrow I_{\Gamma}(u)\in (-\infty, c_{\Gamma}].
$$
As $(u_{n})\subset \Upsilon_{2\kappa}$, it occurs that
$$
\Vert u_{n}\Vert^{2}_{\lambda_{n},\, \Omega'_{j}}>\frac{\tau}{4 T}, \quad \forall j\in \Gamma,
$$
and so, letting $n\rightarrow +\infty$, we get the inequality below
$$
\Vert u\Vert^{2}_{j}\geq \frac{\tau}{4 T}>0, \,\, \quad \forall j\in \Gamma,
$$
which yields $u\mid_{\Omega_{j}}\not= 0$, $j=1, \cdots, l$ and $I'_{\Gamma}(u)=0$. Consequently, by \eqref{below}
$$
\Vert u\Vert^{2}_{j}>\frac{\tau}{2 T}>0, \,\, \quad \forall j\in \Gamma.
$$
In this way, $I_{\Gamma}(u)\geq c_{\Gamma}$.
However, from the fact that $\Phi_{\lambda_{n}, R_n}(u_{n})\leq c_{\Gamma}$ and $\Phi_{\lambda_{n}, R_n}(u_{n})\rightarrow I_{\Gamma}(u)$, as $n \to +\infty$, we deduce that
$I_{\Gamma}(u)=c_{\Gamma}$. Thus, for $n$ large enough
\begin{align*}
\Vert u_{n}\Vert^{2}_{j}> \frac{\tau}{2 T},\quad\vert \Phi_{\lambda_{n}, R_n}(u_{n})-c_{\Gamma}\vert\leq \mu,\,\, \text{for any}\,\,  j\in\Gamma.
\end{align*}
So,  $u_{n}\in  A_{\mu,R_n}^{\lambda_{n}}$ for large $n$, which is a contradiction to $u_{n}\in  (A_{2\mu,R_n}^{\lambda_{n}}\backslash A_{\mu,R_n}^{\lambda_{n}})$.
Thus, we complete the proof.
\end{proof}
In the sequel, $\mu_{1}$, $\mu^{*}$ denote the following numbers
$$
\underset{\textbf{t}\in \partial[1/T^{2}, 1]^{l}}{\min} \vert I_{\Gamma}(\gamma_{0}(\textbf{t}))-c_{\Gamma}\vert=\mu_{1}>0
$$
and
$$
\mu^{*}=\min\{\mu_{1}, \kappa, r/2\}
$$
where $\kappa=\frac{\tau}{8T}$ was given before and $r> \max\{\|w_j\|_{H^{1}_0(\Omega_j)}\,:\,j=1,...,l\}$. Moreover, for each $s>0$, $B_{s}^{\lambda}$ denotes the set
$$
B_{s}^{\lambda}=\{u\in E_{\lambda}(B_{R}(0)):  \Vert u\Vert_{\lambda, R}\leq s\}, \,\, \text{for}\,s>0.
$$

\begin{proposition}\label{prop72}
Let $\mu\in (0, \mu^{*})$ and $\Lambda_{*}> 0$ and $R^*>0$ large enough given in  Proposition \ref{prop71}.  Then, for $\lambda\geq \Lambda_{*}$ and $R \geq R^*$, there exists a
positive solution $u_{\lambda,R}$ of $(M_{\lambda, R})$ satisfying $u_{\lambda}\in A_{\mu, R}^{\lambda}\cap  \Phi_{\lambda, R}^{c_{\Gamma}}\cap B_{r+1}^{\lambda}$.
\end{proposition}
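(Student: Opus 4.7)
The plan is to argue by contradiction using a deformation of the minimax path $\gamma_0$, in the spirit of the approach developed in \cite{Alves} and the minimax scheme of Lemmas \ref{Lem:3.2a}--\ref{lemma1}.

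I would first assume that $\Phi_{\lambda,R}$ has no critical point in $A_{\mu,R}^\lambda \cap \Phi_{\lambda,R}^{c_\Gamma} \cap B_{r+1}^\lambda$ for some fixed $\lambda \geq \Lambda_*$ and $R \geq R^*$. Combining this with the (PS) condition from Lemma \ref{compacidadeR} gives a positive lower bound on $\|\Phi'_{\lambda,R}\|$ throughout $A_{\mu,R}^\lambda \cap \Phi_{\lambda,R}^{c_\Gamma} \cap B_{r+1}^\lambda$; together with the uniform estimate of Proposition \ref{prop71} on $(A_{2\mu,R}^\lambda \setminus A_{\mu,R}^\lambda) \cap \Phi_{\lambda,R}^{c_\Gamma}$, this yields a single positive lower bound $\sigma_1>0$ on $\|\Phi'_{\lambda,R}\|$ over $A_{2\mu,R}^\lambda \cap \Phi_{\lambda,R}^{c_\Gamma} \cap B_{r+1}^\lambda$.

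Next I would build a negative pseudo-gradient flow $\eta(t,\cdot)$ for $\Phi_{\lambda,R}$, multiplied by a Lipschitz cut-off $\psi$ equal to $1$ on $A_{3\mu/2,R}^\lambda$ and vanishing outside $A_{2\mu,R}^\lambda$, and further by a radial cut-off forcing orbits to remain in $B_{r+1}^\lambda$. This flow is the identity outside $A_{2\mu,R}^\lambda$, keeps $A_{2\mu,R}^\lambda\cap B_{r+1}^\lambda$ forward-invariant, and strictly decreases $\Phi_{\lambda,R}$ on its active region. Set $\tilde\gamma(s)=\eta(T_0,\gamma_0(s))$ for a suitably large $T_0$. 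To check admissibility $\tilde\gamma \in \Gamma_*$: on $\partial([1/T^2,1]^l)$, inequality \eqref{4.1} ensures $|I_{\lambda,j}(\gamma_0(s))-c_j|\geq \mu_1 > 2\mu$ for the index $j$ at which $s_j$ is an endpoint (after taking $\mu<\mu^*\leq \mu_1/2$), hence $\gamma_0(s)\notin A_{2\mu,R}^\lambda$ and the flow is the identity there. Moreover, since $\max_s \Phi_{\lambda,R}(\gamma_0(s))=c_\Gamma$ is attained near $w=\sum_j\omega_j\in A_{\mu,R}^\lambda\cap\Phi_{\lambda,R}^{c_\Gamma}$, the flow quantitatively lowers the maximum: there exists $\delta>0$ with $\max_s\Phi_{\lambda,R}(\tilde\gamma(s))\leq c_\Gamma-\delta$.

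The contradiction then comes from Lemma \ref{Lem:3.2a}. Applied to $\tilde\gamma\in\Gamma_*$, it produces $(t_1,\dots,t_l)\in[1/T^2,1]^l$ with $\tilde\gamma(t_1,\dots,t_l)\in\mathcal{N}'_j$ for every $j\in\Gamma$, so $I_{\lambda,j}(\tilde\gamma(t_1,\dots,t_l))\geq c_{\lambda,j}$. Since $\tilde\gamma$ takes values in $A_{2\mu,R}^\lambda$ by invariance of the flow, the condition $\Phi_{\lambda,B_R(0)\setminus\Omega'_\Gamma}(\tilde\gamma)\geq 0$ forces $\Phi_{\lambda,R}(\tilde\gamma(t_1,\dots,t_l))\geq \sum_j c_{\lambda,j}$. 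By Corollary \ref{coro1}(b), $\sum_j c_{\lambda,j}\to c_\Gamma$ as $\lambda\to\infty$ uniformly in $R$, so for $\lambda\geq\Lambda_*$ large enough $\sum_j c_{\lambda,j} > c_\Gamma - \delta$, contradicting $\max_s\Phi_{\lambda,R}(\tilde\gamma(s))\leq c_\Gamma-\delta$. Positivity of the resulting $u_{\lambda,R}$ follows from the maximum principle exactly as in Theorem \ref{teorema1}.

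The main obstacle is the construction of the deformation: one must verify that the cut-off $\psi$ genuinely makes $A_{2\mu,R}^\lambda$ forward-invariant even though this set is defined by several independent inequalities (the non-negativity of $\Phi_{\lambda,B_R(0)\setminus\Omega'_\Gamma}$, the norm bound on $B_R(0)\setminus\Omega_\Gamma$, and the two-sided bound on $I_{\lambda,j}$), and that the quantitative drop $\delta>0$ is independent of the $\lambda$-dependent lower bound coming from (PS) — otherwise the gap could be consumed by the approach $\sum_j c_{\lambda,j}\uparrow c_\Gamma$ and the contradiction would fail. Precisely balancing these technical choices is where the framework of \cite{Alves,DT} is indispensable.
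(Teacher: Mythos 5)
Your overall strategy coincides with the paper's: argue by contradiction, build a truncated pseudo-gradient flow that is the identity on $\gamma_0(\partial[1/T^{2},1]^{l})$ so that the deformed path stays in $\Gamma_{*}$, push the maximum of $\Phi_{\lambda,R}$ along $\gamma_0$ strictly below $c_{\Gamma}$, and contradict the convergence of the minimax level to $c_{\Gamma}$. However, the one step that carries all the difficulty --- producing a drop $\delta>0$ that is \emph{independent of $\lambda$ and $R$} --- is precisely the step you assert and then defer to ``the framework of \cite{Alves,DT}''. This is a genuine gap, not a technicality. The lower bound $d_{\lambda}>0$ for $\Vert\Phi'_{\lambda,R}\Vert$ on $A_{\mu,R}^{\lambda}\cap\Phi_{\lambda,R}^{c_{\Gamma}}\cap B_{r+1}^{\lambda}$ supplied by the (PS) condition has no uniform control in $\lambda$, so running the flow for a fixed time only yields a $\lambda$-dependent drop, which is useless against $b_{\lambda,R,\Gamma}\to c_{\Gamma}$. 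The paper's resolution is quantitative: set $\tilde d_{\lambda}=\min\{d_{\lambda},\sigma_{0}\}$ and run the flow for time $T_{*}=\sigma_{0}\mu/(2K_{*}\tilde d_{\lambda})$, where $K_{*}$ is a Lipschitz constant for $\Phi_{\lambda,R}$ on $B_{r+1}^{\lambda}$. Then either the orbit of a point of $A_{\mu,R}^{\lambda}$ stays for the whole time in the region where $\Psi\equiv1$, giving a drop $\tilde d_{\lambda}T_{*}=\sigma_{0}\mu/(2K_{*})$, or it exits; since $\Vert d\eta/dt\Vert\le1$ and $\Phi_{\lambda,R}$ is $K_{*}$-Lipschitz, the exit forces the orbit to spend a time interval of length at least $\mu/(2K_{*})$ inside $(A_{2\mu,R}^{\lambda}\setminus A_{\mu,R}^{\lambda})\cap\Phi_{\lambda,R}^{c_{\Gamma}}$, where Proposition \ref{prop71} gives the $\lambda$-independent bound $\sigma_{0}$, again yielding the drop $\sigma_{0}\mu/(2K_{*})$. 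Combined with $m_{0}=\sup\{\Phi_{\lambda,R}(u):u\in\gamma_{0}([1/T^{2},1]^{l})\setminus A_{\mu,R}^{\lambda}\}<c_{\Gamma}$ (which is $\lambda$-independent because $\gamma_{0}$ is supported in $\overline{\Omega_{\Gamma}}$), this is what produces the uniform $\delta$. Without this exit-time analysis your argument does not close.

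Two smaller inaccuracies occur in your contradiction step. First, the deformed path does \emph{not} take values in $A_{2\mu,R}^{\lambda}$: on $\partial[1/T^{2},1]^{l}$ it equals $\gamma_{0}$, which lies outside $A_{2\mu,R}^{\lambda}$ precisely because $\mu<\mu_{1}$, so you cannot invoke ``invariance'' there. Fortunately $\Phi_{\lambda,B_{R}(0)\setminus\Omega'_{\Gamma}}(u)\ge 0$ holds for \emph{every} $u$, since $\tilde F_{2}(s)\le\frac{l}{2}s^{2}$ with $l<1$ and $F_{1}\ge0$, so the inequality $\Phi_{\lambda,R}(\tilde\gamma(\mathbf{t}^{*}))\ge\sum_{j}c_{\lambda,j}$ survives. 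Second, Corollary \ref{coro1}(b) asserts $b_{\lambda,R,\Gamma}\to c_{\Gamma}$, not $\sum_{j}c_{\lambda,j}\to c_{\Gamma}$; the latter does not follow from the sandwich in Lemma \ref{lemma1}(a) alone. The cleaner route, and the one the paper takes, is to observe that the deformed path belongs to $\Gamma_{*}$ and therefore bounds $b_{\lambda,R,\Gamma}$ from above by $\max\{m_{0},\,c_{\Gamma}-\sigma_{0}\mu/(2K_{*})\}<c_{\Gamma}$, which contradicts Corollary \ref{coro1}(b) directly, with no need to re-derive Lemma \ref{lemma1}(a) through Lemma \ref{Lem:3.2a} and Miranda's theorem.
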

\begin{proof}
Seeking for a contradiction, let us assume that there exist no critical points for the functional $\Phi_{\lambda, R}(u)$ in $A_{\mu, R}^{\lambda}\cap  \Phi_{\lambda, R}^{c_{\Gamma}}\cap B_{r+1}^{\lambda}$ for $\lambda\geq \Lambda_{*}$. Since $\Phi_{\lambda, R}$ verifies  the (PS) condition, there exists a constant $d_{\lambda}>0$ such that
\begin{align*}
\Vert \Phi'_{\lambda, R}(u)\Vert\geq d_{\lambda}\quad \text{for all}\quad u\in A_{\mu, R}^{\lambda}\cap  \Phi_{\lambda, R}^{c_{\Gamma}}\cap B_{r+1}^{\lambda}.
\end{align*}
By Proposition \ref{prop71},
\begin{align*}
\Vert \Phi'_{\lambda, R}(u)\Vert\geq \sigma_{0}\quad\text{for all}\,\, u\in (A_{2\mu, R}^{\lambda}\backslash A_{\mu, R}^{\lambda})\cap  \Phi_{\lambda, R}^{c_{\Gamma}},
\end{align*}
where $\sigma_{0}>0$ is independent of $\lambda$. In what follows, $\Psi:E_{\lambda, R}\rightarrow \mathbb{R}$ is a continuous functional  verifying
\begin{align*}
&\Psi(u)=1\quad\quad\quad\text{for}\,\, u\in A_{3\mu/2, R}^{\lambda}\cap \Upsilon_{\kappa}\cap B_{r}^{\lambda},\\
&\Psi(u)=0\quad\quad\quad\text{for}\,\,u\not\in A_{2\mu, R}^{\lambda}\cap \Upsilon_{2\kappa}\cap B_{r+1}^{\lambda},\\
&0\leq \Psi(u)\leq 1\quad\text{for}\,\,\forall \,u\in E_{\lambda, R},
\end{align*}
and $H:\Phi_{\lambda, R}^{c_{\Gamma}}\rightarrow E_{\lambda}(B_{R}(0))$  is a function given by
\begin{equation*}
H(u):=\left\{
\begin{array}{l}
-\Psi(u)\frac{ Y(u)}{\Vert  Y(u)\Vert}, \quad \,\,u\in A_{2\mu, R}^{\lambda}\cap B_{r+1}^{\lambda} ,\\
0,  \quad\quad \quad\quad\quad\quad\,\,u\not\in A_{2\mu, R}^{\lambda}\cap B_{r+1}^{\lambda}, \\
\end{array}%
\right.
\end{equation*}
where $Y$ is a pseudo-gradient vector field for $\Phi_{\lambda, R}$ on $\mathcal{K}=\{u\in E_{\lambda,R}: \Phi'_{\lambda, R}(u)\neq 0\}$.
Observe that $H$ is well defined, since $\Phi'_{\lambda, R}(u)\neq 0$, for $u\in A_{2\mu, R}^{\lambda}\cap  \Phi_{\lambda, R}^{c_{\Gamma}}$.
The following inequality
\begin{align*}
\Vert H(u)\Vert\leq 1, \,\,\, \forall \lambda\geq \Lambda_{*}\,\,\text{and}\,\, u\in\Phi_{\lambda, R}^{c_{\Gamma}},
\end{align*}
guarantees that the deformation flow $\eta: [0, \infty)\times \Phi_{\lambda, R}^{c_{\Gamma}}\rightarrow \Phi_{\lambda, R}^{c_{\Gamma}}$ defined by
\begin{align*}
\frac{d\eta}{dt}=H(\eta)\quad \text{and}\quad \eta(0, u)=u\in \Phi_{\lambda, R}^{c_{\Gamma}},
\end{align*}
verifies
\begin{align}
&\frac{d}{dt}\Phi_{\lambda, R}(\eta(t, u))\leq-\Psi(\eta(t, u))\Vert  \Phi'_{\lambda, R}(\eta(t, u))\Vert\leq 0,\label{small1}\\
&\Vert\frac{d\eta}{dt}\Vert_{\lambda}=\Vert H(\eta)\Vert_{\lambda}\leq 1,\nonumber\\
&\eta(t, u)=u\quad\text{for all}\,\,t\geq 0\,\,\text{and}\,\, u\in \Phi_{\lambda, R}^{c_{\Gamma}}\backslash (A_{2\mu, R}^{\lambda}\cap B_{r+1}^{\lambda}).\label{small3}
\end{align}
We now study two paths, which are relevant for what follows:\\

\noindent (1) The path $\textbf{t}\rightarrow \eta(t, \gamma_{0}(\textbf{t}))$, where $\textbf{t}=(t_{1}, \cdots, t_{l})\in [1/T^{2}, 1]^{l}$.\\
Thereby, if $\mu\in (0, \mu^{*})$, we have that
$$
\gamma_{0}(\textbf{t})\not\in A_{2\mu, R}^{\lambda}, \,\, \forall \, \textbf{t}\in \partial ([1/T^{2}, 1]^{l}).
$$
Since
\begin{align*}
\Phi_{\lambda, R}(\gamma_{0}(\textbf{t}))\leq c_{\Gamma}\quad \forall\, \textbf{t}\in \partial ([1/T^{2}, 1]^{l}).
\end{align*}
From \eqref{small3}, it follows that
\begin{align*}
\eta(t, \gamma_{0}(\textbf{t}))=\gamma_{0}(\textbf{t})\quad \forall\, \textbf{t}\in \partial ([1/T^{2}, 1]^{l}).
\end{align*}
So, $\eta(t, \gamma_{0}(\textbf{t}))\in \Gamma_{*}$ for all $t\geq 0$.\\

\noindent (2) The path $\textbf{t}\rightarrow \gamma_{0}(\textbf{t})$, where $\textbf{t}=(t_{1}, \cdots, t_{l})\in [1/T^{2}, 1]^{l}$.

Since $\text{supp} (\gamma_{0}(\textbf{t}))\subset\overline{\Omega_{\Gamma}}$ for all  $\textbf{t}\in[1/T^{2}, 1]^{l}$, then $\Phi_{\lambda, R}(\gamma_{0}(\textbf{t}))$  does not depend on $\lambda> 0$. On the other hand,
\begin{align*}
\Phi_{\lambda, R}(\gamma_{0}(\textbf{t})))\leq c_{\Gamma}\quad \forall\,\,  \textbf{t}\in  [1/T^{2}, 1]^{l},
\end{align*}
and
$$
\Phi_{\lambda, R}(\gamma_{0}(\textbf{t}))= c_{\Gamma}\quad\text{if and only if}\quad t_{j}=1/T, \,\, \forall j\in \Gamma.
$$
Therefore,
\begin{align*}
m_{0}:=\sup\{\Phi_{\lambda, R}(u): u\in \gamma_{0}([1/T^{2}, 1]^{l})\backslash A_{\mu}^{\lambda}\}
\end{align*}
is independent of $\lambda,R>0$ and $m_{0}< c_{\Gamma}$. Now, observe that there exists $K_{*}>0$ such that
\begin{align*}
\vert \Phi_{\lambda, R}(u)-\Phi_{\lambda, R}(v)\vert\leq K_{*}\Vert u-v\Vert_{\lambda, R}, \quad \forall\,\,  u, v\in B_{r}^{\lambda},
\end{align*}
we claim that if  $T_*>0$ is large enough, the estimate below holds
\begin{align}\label{small5}
\max_{\textbf{t}\in[1/T^{2}, 1]^{l}}\Phi_{\lambda}(\eta(T_*, \gamma_{0}(\textbf{t})))<\max\{m_{0}, c_{\Gamma}-\frac{1}{2K_{*}}\sigma_{0}\mu\}.
\end{align}
In fact, write $u=\gamma_{0}(\textbf{t})$, $\textbf{t}\in[1/T^{2}, 1]^{l}$. If $u\not\in A_{\mu, R}^{\lambda}$, we must have by \eqref{small1},
\begin{align*}
\Phi_{\lambda, R}(\eta(t, u))\leq \Phi_{\lambda}(\eta(0, u))= \Phi_{\lambda, R}(u)\leq m_{0}, \quad \forall t\geq 0.
\end{align*}
On the other hand, if $u\in A_{\mu, R}^{\lambda}$, by setting $\tilde{\eta}(t)=\eta(t, u)$, $\tilde{d}_{\lambda}:=\min\{d_{\lambda}, \sigma_{0}\}$ and $T_*=\frac{\sigma_{0}\mu}{2K_{*}\tilde{d}_{\lambda}}>0$. Now we distinguish two cases:\\
(1) $\tilde{\eta}(t)\in A_{3\mu/2, R}^{\lambda}\cap \Upsilon_{\kappa}\cap B_{r}^{\lambda}$ for $\forall\, t\in [0, T_*]$.\\
(2) $\tilde{\eta}(t_{0})\not\in A_{3\mu/2, R}^{\lambda}\cap \Upsilon_{\kappa}\cap B_{r}^{\lambda}$ for some $t_{0}\in [0, T_*]$.\\
If case (1) holds, we have $\Psi(\tilde{\eta}(t))\equiv 1$ and $\Vert  \Phi'_{\lambda, R}(\tilde{\eta}(t))\Vert \geq \tilde{d}_{\lambda}$ for all
$t\in [0, T_*]$. Thereby, by \eqref{small1},
\begin{align*}
\Phi_{\lambda, R}(\tilde{\eta}(T_*))=&\Phi_{\lambda, R}(u)+\int_{0}^{T_*}\frac{d}{ds}\Phi_{\lambda, R}(\tilde{\eta}(s))ds\\
\leq&c_{\Gamma}-\int_{0}^{T_*}\tilde{d}_{\lambda}ds\\
=&c_{\Gamma}-\tilde{d}_{\lambda}T_*\\
\leq&c_{\Gamma}-\frac{\sigma_{0}\mu}{2K_{*}}.
\end{align*}
If (2) holds, we must analyze the following situations:\\
(i) There exists $t_{2}\in [0, T_*]$ such that $\tilde{\eta}(t_{2})\not\in  \Upsilon_{\kappa}$, and thus, for $t_{1}=0$ it yields that
\begin{align*}
\Vert \tilde{\eta}(t_{2})-\tilde{\eta}(t_{1})\Vert_{\lambda, R}\geq \delta>\mu,
\end{align*}
because $\tilde{\eta}(t_{1})=u\in\Upsilon$.\\
(ii) There exists $t_{2}\in [0, T_*]$ such that $\tilde{\eta}(t_{2})\not\in  B_{r}^{\lambda}$, so that for $t_{1}=0$, we obtain
\begin{align*}
\Vert \tilde{\eta}(t_{2})-\tilde{\eta}(t_{1})\Vert_{\lambda, R}\geq r>\mu,
\end{align*}
because $\tilde{\eta}(t_{1})=u\in B_{r}^{\lambda}$.\\
(iii)  $\tilde{\eta}(t)\not\in \Upsilon_{\kappa}\cap B_{r}^{\lambda}$, and  there exist $0\leq t_{1}< t_{2}\leq T_*$ such that
$\tilde{\eta}(t)\in A_{3\mu/2, R}^{\lambda}\backslash A_{\mu, R}^{\lambda}$ for all $t\in [t_{1}, t_{2}]$ with
\begin{align*}
\vert \Phi_{\lambda, R}(\tilde{\eta}(t_{1}))-c_{\Gamma}\vert=\mu \,\, \text{and}\,\, \vert \Phi_{\lambda, R}(\tilde{\eta}(t_{2}))-c_{\Gamma}\vert=\frac{3\mu}{2}.
\end{align*}
From the definition of $K_{*}$,
\begin{align*}
 \Vert \tilde{\eta}(t_{2})-\tilde{\eta}(t)\Vert_{\lambda, R}&\geq \frac{1}{K_{*}}\Big\vert \Phi_{\lambda, R}(\tilde{\eta}(t_{2}))- \Phi_{\lambda, R}(\tilde{\eta}(t_{1}))\Big\vert\\
&\geq\frac{1}{K_{*}}\Big(\vert \Phi_{\lambda, R}(\tilde{\eta}(t_{2}))-c_{j_{0}}\vert-\vert \Phi_{\lambda, R}(\tilde{\eta}(t_{1}))-c_{j_{0}}\vert\Big)\\
&\geq\frac{1}{2K_{*}}\mu.
\end{align*}
By the mean value theorem and $t_{2}-t_{1}\geq \frac{1}{2K_{*}}\mu$, we find
\begin{align*}
\Phi_{\lambda, R}(\tilde{\eta}(T_*))=&\Phi_{\lambda, R}(u)+\int_{0}^{T_*}\frac{d}{ds}\Phi_{\lambda, R}(\tilde{\eta}(s))ds\\
\leq&\Phi_{\lambda, R}(u)-\int_{0}^{T_*}\Psi(\tilde{\eta}(s))\Vert \Phi'_{\lambda, R}(\tilde{\eta}(s))\Vert ds\\
\leq&c_{\Gamma}-\int_{t_{1}}^{t_{2}}\sigma_{0}ds\\
=&c_{\Gamma}-\sigma_{0}(t_{2}-t_{1})\\
\leq&c_{\Gamma}-\frac{\sigma_{0}\mu}{2K_{*}},
\end{align*}
which proves \eqref{small5}. \\
Fix $\widehat{\eta}(\textbf{t})=\eta(T_*, \gamma_{0}(\textbf{t}))$, we have that $\widehat{\eta}(\textbf{t})\in \Upsilon_{2\kappa}$, and so
$\widehat{\eta}(\textbf{t})|_{\Omega'_{j}}\neq 0$ for all $j\in\Gamma$. Thus, $\widehat{\eta}\in \Gamma_{*}$ and
\begin{align*}
b_{\lambda,R, \Gamma}\leq \underset{\textbf{s}\in [1/T^{2}, 1]^{l}}{\max} \Phi_{\lambda, R}(\widehat{\eta} (\textbf{s})) \leq\max\{m_{0}, c_{\Gamma}-\frac{\sigma_{0}\mu}{2K_{*}}\}< c_{\Gamma}.
\end{align*}
But, by Corollary \ref{coro1}, $b_{\lambda, R, \Gamma}\rightarrow c_{\Gamma}$ as $\lambda\rightarrow \infty$ uniformly hold in $R>0$ large, which is a contradiction.

Thus,  we can conclude that
$\Phi_{\lambda, R}$ has a critical point $u_{\lambda,R}\in A_{\mu}^{\lambda}$ for  $\lambda>0$ and $R>0$ large enough.

\end{proof}

\section{Proof of Theorem \ref{teorema}}
From Proposition \ref{prop72}, for $\mu\in (0, \mu^{*})$ and $\Lambda_{*}> 0$, there exists a
positive solution $u_{\lambda,R}$ for problem $(M_{\lambda, R})$ satisfying $u_{\lambda,R}\in A_{\mu,R}^{\lambda}\cap  \Phi_{\lambda, R}^{c_{\Gamma}}\cap B_{r+1}^{\lambda}$ for all $\lambda \geq \Lambda_{*}$ and $R \geq R^*$.

Now we will fix $\lambda \geq \Lambda_{*}$ and take a sequence $R_n \to +\infty$. Thereby, we have a solution $u_{\lambda,n}=u_{\lambda,R_n}$ for $(M_{\lambda, R_n})$ with
$$
u_{\lambda,n}\in A_{\mu, R_n}^{\lambda}\cap  \Phi_{\lambda, R_n}^{c_{\Gamma}}\cap B_{r+1}^{\lambda}, \quad \forall n \in \mathbb{N}.
$$
As $(u_{\lambda,n})$ is bounded in $H^{1}(\mathbb{R}^N)$,  we can assume that for some $u_\lambda \in H^{1}(\mathbb{R}^N)$
$$
\Phi_{\lambda,R_n}(u_{\lambda,n}) \to d \leq c_\Gamma,
$$
$$
u_{\lambda,n} \rightharpoonup u_\lambda \,\, \mbox{in} \,\, H^{1}(\mathbb{R}^N),
$$
$$
u_{\lambda,n} \to u_\lambda \quad \mbox{in} \,\, L^{q}_{\text{loc}}(\mathbb{R}^N) \,\,  \mbox{for any}\,\, q \in [1,2^*),
$$
and
$$
u_{\lambda,n}(x) \to u_\lambda(x) \quad \mbox{a.e.} \, x \in \mathbb{R}^N.
$$
Recall from Lemma \ref{le51} that
$$
0\leq u_{\lambda,n}(x) \leq a_{0} \quad \forall x \in \mathbb{R}^N \setminus \Omega_{\Gamma},
$$
we also have that
$$
0\leq u_{\lambda}(x) \leq a_{0} \quad \forall x \in \mathbb{R}^N \setminus \Omega_{\Gamma}.
$$

The next two lemmas play a fundamental role in the proof of Theorem \ref{teorema}. Since their proofs follow by similar arguments explored in Proposition \ref{prop1*}, we omit them.
\begin{lemma}\label{compactness} For any fixed $\zeta>0$, there exists $R>0$ such that
	$$
	\underset{n\rightarrow\infty}{\lim\sup}\int_{\mathbb{R}^{N}\backslash B_{R}(0)}(\vert \nabla u_{\lambda,n}\vert^{2}+(\lambda V(x)+1)\vert u_{\lambda,n}\vert^{2})dx\leq \zeta.
	$$
\end{lemma}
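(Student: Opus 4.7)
The plan is to mimic the cut-off test function argument already carried out in the proof of Proposition \ref{prop1*}, but here with the parameter $\lambda$ fixed and $R_n\to +\infty$. Since $u_{\lambda,n}$ is a critical point of $\Phi_{\lambda,R_n}$, the identity $\Phi'_{\lambda,R_n}(u_{\lambda,n})(\phi u_{\lambda,n})=0$ is available (not just $o_n(1)$) for any admissible test function $\phi u_{\lambda,n}$, which actually makes things slightly cleaner than in Proposition \ref{prop1*}.

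First I would fix a standard cut-off $\phi_{R}\in C^{\infty}(\mathbb{R}^{N})$ with $\phi_{R}\equiv 0$ on $B_{R/2}(0)$, $\phi_{R}\equiv 1$ on $\mathbb{R}^{N}\setminus B_{R}(0)$, $0\le \phi_{R}\le 1$ and $|\nabla \phi_{R}|\le C/R$. Choose $R$ so large that $\overline{\Omega'_{\Gamma}}\subset B_{R/2}(0)$; for $n$ large enough that $R_{n}>R$, the function $\phi_{R}u_{\lambda,n}$ lies in $H^{1}_{0}(B_{R_{n}}(0))$ and may be used as test function. Expanding $\Phi'_{\lambda,R_{n}}(u_{\lambda,n})(\phi_{R}u_{\lambda,n})=0$ and using that on the support of $\phi_{R}$ we have $G'_{2}(x,t)=\tilde{F}'_{2}(t)\le lt$ for $t\ge 0$, together with $F'_{1}(s)s\ge 0$, gives
\begin{equation*}
\int\bigl(|\nabla u_{\lambda,n}|^{2}+(\lambda V(x)+1)|u_{\lambda,n}|^{2}\bigr)\phi_{R}\,dx\le l\int u_{\lambda,n}^{2}\phi_{R}\,dx-\int u_{\lambda,n}\nabla u_{\lambda,n}\cdot\nabla\phi_{R}\,dx .
\end{equation*}

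The next step is to estimate the last integral by Cauchy--Schwarz using $|\nabla\phi_{R}|\le C/R$ together with the uniform bound $\|u_{\lambda,n}\|_{\lambda,R_{n}}\le K$ from Corollary \ref{coro0} (applied along the subsequence with bounded $\Phi_{\lambda,R_{n}}(u_{\lambda,n})\le c_{\Gamma}$), yielding
\begin{equation*}
\int\bigl(|\nabla u_{\lambda,n}|^{2}+(\lambda V(x)+1-l)|u_{\lambda,n}|^{2}\bigr)\phi_{R}\,dx\le \frac{C K^{2}}{R}.
\end{equation*}
Absorbing the $l$ on the left is the key algebraic point: since $l\lambda V(x)\ge 0$ one has $\lambda V(x)+1-l\ge (1-l)(\lambda V(x)+1)$, so the left-hand side dominates $(1-l)\int\phi_{R}(|\nabla u_{\lambda,n}|^{2}+(\lambda V+1)|u_{\lambda,n}|^{2})\,dx$. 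Using $\phi_{R}\equiv 1$ on $\mathbb{R}^{N}\setminus B_{R}(0)$, I would conclude
\begin{equation*}
\int_{\mathbb{R}^{N}\setminus B_{R}(0)}\bigl(|\nabla u_{\lambda,n}|^{2}+(\lambda V(x)+1)|u_{\lambda,n}|^{2}\bigr)dx\le \frac{C K^{2}}{R(1-l)},
\end{equation*}
and then given $\zeta>0$ choose $R$ large enough (after the fixed constraint $\overline{\Omega'_{\Gamma}}\subset B_{R/2}(0)$) so that the right-hand side is $\le \zeta$; passing to $\limsup_{n\to\infty}$ on the left gives the claim.

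I do not foresee a genuine obstacle: the only delicate points are (i) ensuring the test function is admissible, which forces the restriction $R<R_{n}$ and hence the passage to $n$ large, and (ii) the absorption step, which hinges on $l<1$ together with $\lambda V\ge 0$, exactly as in the earlier argument. Everything else is a straightforward Cauchy--Schwarz estimate based on the a priori bound for $\|u_{\lambda,n}\|_{\lambda,R_{n}}$.
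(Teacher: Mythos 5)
Your proof is correct and follows essentially the same route the paper intends: the paper omits the proof, pointing to the cut-off test-function argument of Proposition \ref{prop1*}, and your argument is exactly that estimate (with the absorption of the $l\int\phi_R u_{\lambda,n}^2$ term, which the paper leaves implicit, carried out explicitly and correctly via $\lambda V(x)+1-l\geq(1-l)(\lambda V(x)+1)$). The only nit is the citation of Corollary \ref{coro0} for the uniform bound; since $R_n$ varies the relevant statement is Lemma \ref{boundedness}/Corollary \ref{coro}, or more directly the fact that $u_{\lambda,n}\in B_{r+1}^{\lambda}$ by construction.
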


\begin{lemma} \label{compacidadeRR} $u_{\lambda,n}\to u_{\lambda}$ in $H^{1}(\mathbb{R}^N)$. Moreover,
	$$
	F_1(u_{\lambda,n}) \to F_1(u_{\lambda}) \quad \mbox{and} \quad  F'_1(u_{\lambda,n})u_{\lambda,n} \to F'_1(u_{\lambda})u_{\lambda} \quad \mbox{in} \quad L^{1}({\mathbb{R}}^N).
	$$
	As a consequence, setting the energy functional $\Phi_\lambda:E_{\lambda} \rightarrow (-\infty, +\infty]$ given by
	$$
	\Phi_{\lambda}(u)=\dis\frac{1}{2}\int \big (|\nabla u|^2+(\lambda V(x) +1)|u|^2\big)dx-\dis\frac{1}{2}\int u^2\log u^2dx,
	$$
the function $u_{\lambda}$ is a critical point of $\Phi_{\lambda}$ with
	\begin{align*}
u_\lambda \in  A_{\mu}^{\lambda}=\{u\in (\Upsilon_\infty)_{2\kappa}: \Phi_{\lambda, \mathbb{R}^N \backslash \Omega'_{\Gamma}}(u)\geq 0, \,\Vert u\Vert^{2}_{\mathbb{R}^N \backslash \Omega_{\Gamma}}\leq\mu, \,\,\vert I_{\lambda, j}(u)-c_{j}\vert\leq \mu, \,\, \forall j\in \Gamma\},
\end{align*}
where
$$
\Upsilon_\infty=\Big\{ u\in E_{\lambda}: \Vert u\Vert_{\lambda, \Omega'_{j}}>\frac{\tau}{2T}, \,\, \forall j\in\Gamma \Big\},
$$
and
$$
(\Upsilon_\infty)_{r}=\{u\in E_{\lambda}: \inf_{v\in \Upsilon_\infty}\Vert u-v\Vert_{\lambda, \Omega'_{j}}\leq r,\,\forall j\in\Gamma\}.
$$

Here, by a critical point we understand that $u_\lambda$ satisfies the inequality below
$$
\int \nabla u_\lambda \nabla (v-u_\lambda)\, dx + \int (\lambda V(x)+1)u_\lambda(v-u_\lambda)\,dx + \int F_1(v)\, dx - \int F_1(u_\lambda)\, dx \geq \int F'_2(u_\lambda)(v-u_\lambda)\,dx,
$$
for all $v \in E_\lambda$.	Hence, $u_\lambda$ satisfies the equality below
\begin{equation*}
	\displaystyle \int_{\mathbb{R}^N} (\nabla u_\lambda \nabla v +\lambda V(x)u_\lambda  v)dx=\displaystyle \int_{\mathbb{R}^N} u_\lambda v \log u_\lambda^2dx,\,\, \mbox{for all } v \in C^\infty_0(\mathbb{R}^{N}).
\end{equation*}
\end{lemma}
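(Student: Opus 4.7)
The plan is to mimic Proposition \ref{prop1*}, with Lemma \ref{compactness} playing the role that the deepening potential $\lambda_n V$ played there; here $\lambda$ is fixed and only $R_n\to\infty$. The three ingredients to produce in order are: (a) passing $\Phi'_{\lambda,R_n}(u_{\lambda,n})=0$ to the limit against $C_0^\infty$ test functions; (b) strong $H^1(\mathbb{R}^N)$-convergence together with the $L^1$-convergence of $F_1(u_{\lambda,n})$ and $F'_1(u_{\lambda,n})u_{\lambda,n}$; (c) the variational-inequality characterization of $u_\lambda$ and the weak equation.

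For (a), fix $v\in C_0^\infty(\mathbb{R}^N)$; for $n$ large $\mathrm{supp}\,v\subset B_{R_n}(0)$, so local Sobolev compactness, the subcritical bound $|G'_2(x,t)|\le l|t|+C|t|^{p-1}$, and the polynomial control of $F'_1$ (the singularity of $F'_1$ near $0$ is harmless since $tF'_1(t)=-t^2\log t^2-t^2$ is bounded there) yield
$$
\int(\nabla u_\lambda\nabla v+(\lambda V+1)u_\lambda v)\,dx+\int F'_1(u_\lambda)v\,dx=\int G'_2(x,u_\lambda^+)v\,dx.
$$
By a density argument using strong approximation on compact sets together with the tail control of Lemma \ref{compactness}, this identity extends to $v=u_\lambda$.

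For (b), Lemma \ref{compactness} combined with the subcriticality of $G'_2$ upgrades local convergence to
$$
\int G'_2(x,u_{\lambda,n}^+)u_{\lambda,n}\,dx\longrightarrow\int G'_2(x,u_\lambda^+)u_\lambda\,dx.
$$
Substituting this into the identity $\Phi'_{\lambda,R_n}(u_{\lambda,n})u_{\lambda,n}=0$, namely
$$
\|u_{\lambda,n}\|_\lambda^2+\int F'_1(u_{\lambda,n})u_{\lambda,n}\,dx=\int G'_2(x,u_{\lambda,n}^+)u_{\lambda,n}\,dx,
$$
and using weak lower semicontinuity of $\|\cdot\|_\lambda^2$ together with Fatou's lemma applied to the nonnegative sequence $F'_1(u_{\lambda,n})u_{\lambda,n}$, the two liminf inequalities sum to the limit of the right-hand side (by the $C_0^\infty$-identity extended to $u_\lambda$), so each must be an equality. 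Thus $\|u_{\lambda,n}\|_\lambda\to\|u_\lambda\|_\lambda$, giving strong convergence in $E_\lambda$ (hence in $H^1(\mathbb{R}^N)$), while Scheff\'e's lemma upgrades $F'_1(u_{\lambda,n})u_{\lambda,n}\to F'_1(u_\lambda)u_\lambda$ to $L^1$-convergence; the pointwise estimate $0\le F_1(s)\le F'_1(s)s$ (valid because $F_1$ is convex, even and $F_1(0)=0$) then forces $F_1(u_{\lambda,n})\to F_1(u_\lambda)$ in $L^1$ by generalized dominated convergence.

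For (c), convexity of $F_1$ gives $F_1(v)-F_1(u_\lambda)\ge F'_1(u_\lambda)(v-u_\lambda)$ pointwise for every $v\in E_\lambda$ with $F_1(v)\in L^1$; integrating and using the identity produced in (a) yields the stated variational inequality. Choosing $v=u_\lambda\pm t\varphi$ with $\varphi\in C_0^\infty$, dividing by $t>0$ and letting $t\to 0^+$ on both sides recovers the weak equation, after invoking the $L^\infty$-bound of Lemma \ref{le51} together with $u_\lambda\ge 0$ to reduce $G'_2(x,u_\lambda)=F'_2(u_\lambda)$ everywhere, and the algebraic identity $F'_2(s)-F'_1(s)=s\log s^2+s$. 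The main obstacle I anticipate is securing strong $H^1$-convergence in the presence of the Sobolev-supercritical logarithmic nonlinearity: this is precisely why Lemma \ref{compactness} is indispensable, since its uniform tail control is what cooperates with the nonnegativity of $F'_1(u_{\lambda,n})u_{\lambda,n}$ to make the one-sided Fatou estimate close.
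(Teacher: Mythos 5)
Your proposal is correct and follows essentially the route the paper intends: the paper omits this proof, stating only that it "follows by similar arguments explored in Proposition \ref{prop1*}", and your argument is exactly that adaptation (tail control from Lemma \ref{compactness} replacing the role of $\lambda_n V$, weak lower semicontinuity plus Fatou on the nonnegative term $F_1'(u_{\lambda,n})u_{\lambda,n}$ closing against the convergent right-hand side, then Scheff\'e and generalized dominated convergence for the $L^1$ statements), supplemented by the standard convexity argument for the variational inequality. The only point you gloss over --- extending the limiting identity from $C_0^\infty$ test functions to $v-u_\lambda$ for general $v\in E_\lambda$, which needs a density argument in $H^1_c(\mathbb{R}^N)$ and the integrability of $F_1'(u_\lambda)(v-u_\lambda)$ --- is glossed over at exactly the same level in Proposition \ref{prop1*} itself, so this matches the paper's standard of detail.
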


Now, we are ready to conclude the proof of Theorem \ref{teorema}.

\begin{proof} [Proof of Theorem \ref{teorema}]

Now, given $\lambda_n \to +\infty$ and $\mu_n\in (0, \mu^{*})$ with $\mu_n \to 0$, there exists a solution $u_n \in  A_{\mu_n}^{\lambda_n}$ of problem $(P_{\lambda_n})$. Therefore, $(u_n)$ is bounded in $H^{1}(\mathbb{R}^N)$ and satisfies:\\
(a)\, $\|\Phi'_{\lambda_{n}}(u_{\lambda_{n}})\|=0,\,\,\, \forall n\in \mathbb{N}$, \\
(b)\, $\Vert u_{\lambda_{n}}\Vert _{\lambda_{n},\mathbb{R}^N \backslash \Omega_{\Gamma} }\rightarrow 0$,\\
(c)\, $\Phi_{\lambda_{n}}(u_{n})\rightarrow d\leq c_{\Gamma}$.\\

Here,
$$
\|\Phi'_\lambda(u)\|=\sup\left\{\langle \Phi'_\lambda(u),z\rangle\,:\, z \in H_c^{1}(\mathbb{R}^N) \quad \mbox{and} \quad \|z\|_\lambda \leq 1 \right\}.
$$

Arguing as in Proposition \ref{prop1*}, there exists $u \in H^{1}(\mathbb{R}^N)$ such that
$u_{\lambda_{n}}\rightarrow u$ strongly in $H^{1}(\mathbb{R}^N)$, and  $u\equiv0$ in $\mathbb{R}^N \backslash \Omega_{\Gamma}$ and $u$ is a nontrivial solution of
$$
\left\{
\begin{array}{lc}
-\Delta u=u \log u^2, & \mbox{in} \quad \Omega_{\Gamma}, \\
u=0,  & \,\,\mbox{on} \quad \partial \Omega_{\Gamma},  \\
\end{array}
\right.
\eqno{(P_{\infty, \Gamma})}
$$
and so,
$$
I_{\Gamma}(u)\geq c_{\Gamma}.
$$
On the other hand, we also know that
$$
\Phi_{\lambda_{n}}(u_{\lambda_{n}})\rightarrow I_{\Gamma}(u),
$$
and so,
$$
I_{\Gamma}(u)=d\,\, \text{and}\,\,d\geq c_{\Gamma}.
$$
Since $d\leq c_{\Gamma}$, it yields that
$$
I_{\Gamma}(u)=c_{\Gamma},
$$
showing that $u$ is a least energy solution for $(P_{\infty, \Gamma})$. This completes the proof of the theorem.
\end{proof}

\section*{Acknowledgements}
The authors would like to thank the referee for many useful comments which clarify the paper.

\noindent \textsc{Claudianor O. Alves } \\
Unidade Acad\^{e}mica de Matem\'atica\\
Universidade Federal de Campina Grande \\
Campina Grande, PB, CEP:58429-900, Brazil \\
\texttt{coalves@mat.ufcg.edu.br} \\
\noindent and \\
\noindent \textsc{Chao Ji}(Corresponding author) \\
Department of Mathematics\\
East China University of Science and Technology \\
Shanghai 200237, PR China \\
\texttt{jichao@ecust.edu.cn}

\end{document}